\def\cal{\mathcal }
\theoremstyle{plain}
\newtheorem{theorem}{Theorem}[section]
\newtheorem{corollary}[theorem]{Corollary}
\newtheorem{lemma}[theorem]{Lemma}
\newtheorem{proposition}[theorem]{Proposition}
\theoremstyle{definition}
\newtheorem{remark}[theorem]{Remark}
\newtheorem{examples}[theorem]{Examples}
\newtheorem{example}[theorem]{Example}
\newtheorem{ODE}[theorem]{The ODE}
\numberwithin{equation}{section}
\title[Limits of Bessel and Dunkl processes  and free
  convolutions]{Limit theorems for Bessel and Dunkl processes of large dimensions and free convolutions}
\author{Michael Voit, Jeannette H.C. Woerner} 
\address{Fakult\"at Mathematik, Technische Universit\"at Dortmund,
          Vogelpothsweg 87,
          D-44221 Dortmund, Germany}
\email{michael.voit@math.tu-dortmund.de, jeannette.woerner@math.tu-dortmund.de}
\subjclass[2010]{Primary 60F05; Secondary 60F15, 60B20, 60J60, 60K35,  70F10, 82C22 }
\keywords{Bessel processes, Dunkl processes, interacting particle systems, Calogero-Moser-Sutherland models, 
zeroes of Hermite polynomials, zeroes of Laguerre polynomials, $\beta$-Hermite ensembles,
  $\beta$-Laguerre ensembles, Dyson Brownian motion, semicircle laws, Marchenko-Pastur laws,
free convolution, Stieltjes transform, Burgers equation.}
\begin{document}
\date{\today}

\begin{abstract}
  We study Bessel and Dunkl processes  $(X_{t,k})_{t\ge0}$  on $\mathbb R^N$ with possibly multivariate
  coupling constants $k\ge0$. These processes
  describe interacting particle systems of 
  Calogero-Moser-Sutherland type with $N$ particles.
  For the root systems  $A_{N-1}$ and $B_N$ these Bessel processes
  are
related with $\beta$-Hermite and $\beta$-Laguerre ensembles. Moreover, for the frozen case $k=\infty$, these processes
degenerate to deterministic
or pure jump processes.

We use the generators for Bessel and Dunkl  processes  of types A and B and  derive analogues of Wigner's semicircle  and  Marchenko-Pastur
limit laws for  $N\to\infty$ for the empirical distributions of the particles with arbitrary initial empirical distributions
by using free convolutions. In particular, for Dunkl processes of type B new non-symmetric
semicircle-type limit distributions on 
 $\mathbb R$ appear. Our results imply that the form of the limiting measures is already completely determined by the frozen processes. Moreover, 
in the frozen cases, our approach leads to  a new simple
 proof of the  semicircle  and  Marchenko-Pastur limit
laws for the empirical measures of the zeroes of  Hermite  and Laguerre polynomials respectively.
\end{abstract}

\maketitle

\section{Introduction}

Calogero-Moser-Suther\-land particle systems on  $\mathbb R$ or $[0,\infty[$ 
 with $N$ particles 
can be described as  multivariate Bessel processes on 
 closed Weyl chambers in  $\mathbb R^N$. These Bessel processes 
 are time-homogeneous diffusions with well-known transition probabilities
and
generators of the transition semigroups; moreover they
are solution of the associated stochastic differential equations (SDEs); see  \cite{CGY,GY,R1,R2, 
RV1,RV2,DV, An} for the background. 
These multivariate Bessel processes $(X_{t,k})_{t\ge0}$ depend on their starting configurations for $t=0$,
root systems, and  a possibly multidimensional
multiplicity parameter $k$ which describes the strength of interaction
of the particles to each other and to the boundary.

Furthermore, based on the theory of Dunkl operators, these Bessel processes on Weyl chambers in
$\mathbb R^N$ can be extended in a canonical way to Feller processes  $(X_{t,k})_{t\ge0}$ on $\mathbb R^N$ 
by adding random reflections which are associated with the underlying root systems and 
multiplicity parameters $k$; see \cite{CGY, GY, R1, RV1, RV2} for the background.
These diffusion-reflection processes are called Dunkl processes;
for the background in analysis and mathematical physics see \cite{R2, An, DV}
and references there.
For these  Bessel and Dunkl processes
$(X_{t,k}=(X_{t,k}^1,\ldots,X_{t,k}^N))_{t\ge0}$
we derive  limit theorems for the empirical distributions
\begin{equation}\label{intro-empirical} \frac{1}{N}(\delta_{X_{t,k}^1/\sqrt N}+\ldots +\delta_{X_{t,k}^N/\sqrt N})
  \end{equation}
of the $N$ particles as $N\to\infty$ for $t>0$ under the condition that these 
empirical distributions converge for $t=0$ and $N\to\infty$ weakly to some given probability measure
 $\mu\in M^1( \mathbb R)$ which satisfies some moment condition.
We  prove that then  the measures in (\ref{intro-empirical})
converge a.s.~weakly to
probability measures $\mu_t\in M^1( \mathbb R)$ which can be described  in terms of  $\mu$
and free additive convolutions $\boxplus$. The appearance of free probability is not surprising, as for some root systems and
multiplicities $k$, our Bessel processes describe  the evolutions of
spectra of classical random matrix models like the $\beta$-Hermite and  $\beta$-Laguerre ensembles
of Dumitriu and Edelman \cite{DE1, DE2}.
Thus our results are closely related to Wigner's semicircle laws and Marchenko-Pastur limit laws
in different random matrix settings; see e.g.~\cite{AGZ, D, HT, Me, NS, OP, RS}. 
We  mention that in particular the dynamic approach in Section 4.3 of \cite{AGZ} is closely related to our paper.
However,  our approach via moments is simpler than that in \cite{AGZ}
in view of the technical tools  on  stochastic processes.
Moreover,  in \cite{AGZ} only  processes of type A are considered.

It is clear that for our limit theorems
we  need some control on the  parameters $k$ and the types of root systems
which must exist for all dimensions $N$.
This and the need of nontrivial interactions of the particles are the
 reason that we will restrict our attention to  the root systems of types $A_{N-1}$
and $B_N$ on  $\mathbb R^N$. Moreover, 
as the processes for the root systems $D_N$ differ from those for $B_N$ only in the behavior of one extremal
particle (with a suitable relation between the multiplicities; see e.g. \cite{AV1, V}),
our results on the empirical distributions of $N$ particles for $N\to\infty$
for the root systems $D_N$ may be easily regarded as a special case of some $B_N$-case.

We next briefly summarize some details of the main results of this paper.

For the root systems $A_{N-1}$, we fix a multiplicity $k\in]0,\infty[$. The associated Bessel processes
    $(X_{t,k})_{t\ge0}$ then live on the closed Weyl chambers
$$C_N^A:=\{x\in \mathbb R^N: \quad x_1\ge x_2\ge\ldots\ge x_N\},$$
and  the generators of the transition semigroups are
\begin{equation}\label{def-L-A-intro} \cal L_k f:= \frac{1}{2} \Delta f +
 k \sum_{i=1}^N\Bigl( \sum_{j\ne i} \frac{1}{x_i-x_j}\Bigr) \frac{\partial}{\partial x_i}f ,
 \end{equation}
where we assume reflecting boundaries, i.e.,
the domain of $\cal L_k$ is
$$D(\cal L_k):=\{f|_{C_N^A}: \>\> f\in C^{(2)}(\mathbb R^N),
\>\>\> f\>\>\text{ invariant under all coordinate permutations}\}.$$
It will be convenient, also to consider the renormalized processes
$(\tilde X_{t,k}:=\frac{ X_{t,k}}{\sqrt k})_{t\ge0}$, which  satisfy the SDEs
\begin{equation}\label{SDE-A-normalized-intro}
d\tilde X_{t,k}^i =\frac{1}{\sqrt k}dB_t^i + \sum_{j\ne i} 
 \frac{1}{\tilde X_{t,k}^i-\tilde X_{t,k}^j}dt\quad\quad(i=1,\ldots,N)
\end{equation}
with   $N$-dimensional Brownian motions $(B_t^1,\ldots,B_t^N)_{t\ge0}$. We mention that
these SDEs admit unique strong solutions  by \cite{GM} even if
these SDEs do not satisfy the standard assumptions for general  SDEs as e.g.~in the monograph \cite{P}
due to
the singularities on the boundary.
For $k=\infty$ these SDEs  degenerate to the  ODEs
\begin{equation}\label{ODE-A-normalized-intro}
\frac{d}{dt}\tilde X_{t,\infty}^i = \sum_{j\ne i} 
 \frac{1}{\tilde X_{t,\infty}^i-\tilde X_{t,\infty}^j}\quad\quad(i=1,\ldots,N).
\end{equation}
For the root systems $B_N$, we have  $k=(k_1,k_2)\in]0,\infty[^2$, the  Bessel  processes live on 
$$C_N^B:=\{x\in \mathbb R^N: \quad x_1\ge x_2\ge\ldots\ge x_N\ge0\},$$
and the generators  are
\begin{equation}\label{def-L-B}\cal L_k f:= \frac{1}{2} \Delta f +
 k_2 \sum_{i=1}^N \sum_{j\ne i} \Bigl( \frac{1}{x_i-x_j}+\frac{1}{x_i+x_j}  \Bigr)
 \frac{\partial}{\partial x_i}f 
\quad + k_1\sum_{i=1}^N\frac{1}{x_i}\frac{\partial}{\partial x_i}f, \end{equation}
where we again assume reflecting boundaries.
We now write the
 multiplicities
 as $k=(k_1,k_2)=(\nu\cdot \beta, \beta)$ with $\nu\ge0,\beta>0$. Moreover, we
 study the  renormalized Bessel processes $(\tilde X_{t,k}:=X_{t,k}/\sqrt \beta)_{t\ge0}$ which then
 satisfy the SDEs
\begin{equation}\label{SDE-B-normalized-intro}
  d\tilde X_{t,k}^i = \frac{1}{\sqrt{\beta}}dB_t^i+
  \Bigl(\sum_{j\ne i} \frac{\tilde X_{t,k}^i}{(\tilde X_{t,k}^i)^2-(\tilde X_{t,k}^j)^2}+ \frac{\nu}{\tilde X_{t,k}^i}\Bigr) dt
  \quad\quad(i=1,\ldots,N)
\end{equation}
with  $(B_t^1,\ldots,B_t^N)_{t\ge0}$ as above.
For $\beta=\infty$ these SDEs  degenerate to the ODEs
\begin{equation}\label{basic-ode-b-intro}
 \frac{d}{dt}\tilde X_{t,\infty}^i = 
  \sum_{j\ne i} \frac{\tilde X_{t,\infty}^i}{(\tilde X_{t,\infty}^i)^2-(\tilde X_{t,\infty}^j)^2}+ \frac{\nu}{\tilde X_{t,\infty}^i}
  \quad\quad(i=1,\ldots,N).
\end{equation}

We point out that the limit transitions $k,\beta\to\infty$  above for the root systems of type A and B lead to interesting
limit theorems which admit interpretations for $\beta$-random matrix ensembles; see \cite{DE2, AHV, AKM1, AKM2, AV1, AV2, GK, GoM, V, VW}.

We next recapitulate from  \cite{R1,R2,RV1,RV2} that for the root systems $A_{N-1}$ and $B_N$,
the transition probabilities of the Bessel processes  $(X_{t,k})_{t\ge0}$ have the form
\begin{equation}\label{density-general}
K_t(x,A)=c_k \int_A \frac{1}{t^{\gamma_k+N/2}} e^{-(\|x\|^2+\|y\|^2)/(2t)} J_k(\frac{x}{\sqrt{t}}, \frac{y}{\sqrt{t}}) 
\cdot w_k(y)\> dy
\end{equation}
for $t>0$,  $x\in C_N$, and  $A\subset C_N$ a Borel set (with $C_N=C_N^A, C_N^B$ respectively), with the  weight functions 
\begin{equation}\label{def-wk}
w_k^A(x):= \prod_{i<j}(x_i-x_j)^{2k}, \quad\quad 
w_k^B(x):= \prod_{i<j}(x_i^2-x_j^2)^{2k_2}\cdot \prod_{i=1}^N x_i^{2k_1},\end{equation}
and with the constants $\gamma_k^A(k):=kN(N-1)/2$ and   $\gamma_k^B(k_1,k_2):=k_2N(N-1)+k_1N$
respectively. In both cases,
$w_k$ is homogeneous of degree $2\gamma_k$, 
$c_k>0$ is a known normalization.
$J_k$ is a multivariate Bessel function of type $A_{N-1}$ or $B_N$ with multiplicities $k$ or $(k_1,k_2)$ 
which is analytic on $\mathbb C^N \times \mathbb C^N $ with
$ J_k(x,y)>0$ for $x,y\in \mathbb R^N $.
Moreover, $J_k(x,y)=J_k(y,x)$ and $J_k(0,y)=1$
for  $x,y\in \mathbb C^N $; see  e.g.  \cite{R1,R2}.
In particular, if $X_{0,k}=0$, then $X_{t,k}$ has the Lebesgue density
\begin{equation}\label{density-A-0}
 \frac{c_k}{t^{\gamma+N/2}} e^{-\|y\|^2/(2t)} \cdot w_k(y)
\end{equation}
on $C_N$ for $t>0$.
Hence, for the root systems $A_{N-1}$ and $B_N$, the  processes $(X_{t,k})_{t\ge0}$  are related to
$\beta$-Hermite and $\beta$-Laguerre  ensembles in \cite{DE1}.

We now turn to the main results of this paper.

The following  result in the A-case for $k=\infty$ is a special case of the main result of Section 2.
It uses the classical  free additive convolution $\boxplus$ and the semicircle distributions $\mu_{sc, R}$ with supports $[-R,R]$ for $R\ge0$
as discussed e.g.~in \cite{AGZ, NS}.

\begin{theorem}\label{general-free-convolution-a-intro}
  Let $\mu\in M^1(\mathbb R)$ be a probability measure with compact support, and let
 $(x_{N,n})_{N\ge1, 1\le n\le N}\subset\mathbb R$ a sequence
with $x_{N, n-1}\ge x_{N, n}$ for  $2\le n\le N$ such that the normalized empirical measures
\begin{equation}\label{starting-empirical-a-intro}
\mu_{N,0}:= \frac{1}{N}(\delta_{x_{N, 1}/\sqrt N}+\ldots \delta_{x_{N, N}/\sqrt N})
\end{equation}
tend weakly to  $\mu$ for $N\to\infty$.
If we take the solutions
$(\phi_{N,1}(t),\ldots,\phi_{N,N}(t))$  of (\ref{ODE-A-normalized-intro}) with $\phi_{N,n}(0)=x_{N,n}$  and 
the associated  normalized
 empirical measures
$$\mu_{N,t}:= \frac{1}{N}(\delta_{\phi_{N,1}(t)/\sqrt N}+\ldots +\delta_{\phi_{N,N}(t)/\sqrt N}) \quad\quad(t\ge0)$$
for $N\in\mathbb N$, then  for each 
$t\in[0,\infty[$, 
the  $\mu_{N,t}$ tend weakly to  $\mu_{sc, 2\sqrt{t}}\boxplus \mu$.
\end{theorem}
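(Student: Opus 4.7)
The plan is the moment method, which reduces this deterministic ODE problem to an essentially algebraic recursion. Setting $\psi_i(t):=\phi_{N,i}(t)/\sqrt N$ and $m_{N,t}^{(p)}:=\int_{\mathbb R} x^p\,d\mu_{N,t}(x)=\frac{1}{N}\sum_{i=1}^N\psi_i(t)^p$, the rescaled ODE reads $\dot\psi_i=\frac{1}{N}\sum_{j\ne i}(\psi_i-\psi_j)^{-1}$. Differentiating $m_{N,t}^{(p)}$, symmetrizing in $i,j$, and invoking $\frac{\psi_i^{p-1}-\psi_j^{p-1}}{\psi_i-\psi_j}=\sum_{a+b=p-2}\psi_i^a\psi_j^b$ gives, for $p\ge 2$,
\[
\frac{d}{dt}m_{N,t}^{(p)} \;=\; \frac{p}{2}\sum_{a+b=p-2} m_{N,t}^{(a)} m_{N,t}^{(b)} \;-\; \frac{p(p-1)}{2N}\,m_{N,t}^{(p-2)},
\]
while $m_{N,t}^{(0)}\equiv 1$ and $\frac{d}{dt}m_{N,t}^{(1)}\equiv 0$ by antisymmetry. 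This is a closed moment hierarchy with an $O(1/N)$ correction.

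The second step is to pass to $N\to\infty$. The case $p=2$ is explicit: $m_{N,t}^{(2)}=m_{N,0}^{(2)}+(1-1/N)\,t$. Proceeding inductively on $p$ via the quadratic right-hand side together with Jensen's inequality $|m_{N,t}^{(q)}|\le (m_{N,t}^{(2q)})^{1/2}$, one obtains uniform bounds $\sup_{N,\,t\in[0,T]}|m_{N,t}^{(p)}|<\infty$ on every $[0,T]$. Under the standing hypothesis that $\mu$ has compact support (together with the implicit uniform boundedness of the $x_{N,n}/\sqrt N$, which upgrades weak convergence of $\mu_{N,0}$ to moment convergence $m_{N,0}^{(p)}\to\int x^p\,d\mu$), the $O(1/N)$ correction drops out, and the limits $m_t^{(p)}:=\lim_N m_{N,t}^{(p)}$ exist and satisfy the closed system
\[
\frac{d}{dt}m_t^{(p)}=\frac{p}{2}\sum_{a+b=p-2} m_t^{(a)} m_t^{(b)},\qquad m_0^{(p)}=\int_{\mathbb R} x^p\,d\mu(x),
\]
which determines all $m_t^{(p)}$ uniquely from the moments of $\mu$.

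The final step identifies this limit with the free convolution. Packaging the moments into the formal Cauchy transform $G_t(z)=\sum_{p\ge 0}m_t^{(p)}z^{-p-1}$, the limiting hierarchy is equivalent to the complex inviscid Burgers equation $\partial_t G_t+G_t\,\partial_z G_t=0$, whose method-of-characteristics solution is $G_t(z+tG_t(z))=G_\mu(z)$---precisely Voiculescu's subordination relation characterising the free additive convolution with the semicircle law of variance $t$. Since $\mu_{sc,2\sqrt t}\boxplus\mu$ has compact support, its moments are determinate and moment convergence upgrades to weak convergence $\mu_{N,t}\Rightarrow\mu_{sc,2\sqrt t}\boxplus\mu$. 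The main technical obstacle is precisely the uniform-in-$N$ moment control used in the second step: without it neither the closure of the limiting recursion nor the passage from moments to weak convergence is justified, while everything else in the argument is algebraic bookkeeping.
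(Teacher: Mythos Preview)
Your proposal is correct and follows essentially the same route as the paper: derive the moment recursion from the ODE, pass to the $N\to\infty$ limit to obtain the closed hierarchy (the paper's Lemma~2.4 and Proposition~2.8), encode it as the inviscid Burgers equation for the Cauchy/Stieltjes transform (the paper's Proposition~2.9), and then identify the limit with $\mu_{sc,2\sqrt t}\boxplus\mu$. The only substantive difference is the last identification: the paper passes from Burgers to the PDE $R_t=z$ for the $R$-transform and reads off $R(t,z)=tz+R_\mu(z)=R_{\mu_{sc,2\sqrt t}}+R_\mu$, whereas you solve Burgers by characteristics to obtain Voiculescu's subordination relation $G_t(z+tG_t(z))=G_\mu(z)$. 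Both are standard and equivalent.

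Two small remarks. First, your appeal to Jensen's inequality $|m_{N,t}^{(q)}|\le (m_{N,t}^{(2q)})^{1/2}$ is unnecessary and, as stated, points the wrong way for an induction on the moment order; the recursion already closes downward (the right-hand side for $m^{(p)}$ involves only $m^{(0)},\dots,m^{(p-2)}$), so a straight induction on $p$ gives the uniform bounds without invoking higher moments---this is exactly what the paper does in the proof of Proposition~2.8. Second, you correctly flag that weak convergence of $\mu_{N,0}$ to a compactly supported $\mu$ must be upgraded to moment convergence; the paper handles this by taking moment convergence as the working hypothesis (Lemma~2.4) and then checking the Carleman condition for the limits.
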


The proof of this result will be based on recursive formulas for the moments of the measures $\mu_{N,t}$ which follow from 
(\ref{ODE-A-normalized-intro}). By using the Stieltjes   and R-transforms of
the measures $\mu_{N,t}$ and their limits, we shall see that the limits of the $\mu_{N,t}$ are equal to $\mu_{sc, 2\sqrt{t}}\boxplus \mu$.
We mention that this ODE-approach includes a classical limit result on the empirical distributions
of the zeroes of the classical Hermite polynomials $H_N$ for $N\to\infty$; see Corollary \ref{classical-limit-hermite-ns} below.
For other proofs of this result see e.g.~\cite{D, G, KM}.

In Section 3 we  use standard techniques from probability like the  Burkholder-Davis-Gundy inequality and
the lemma of Borel-Cantelli
to extend the
 results for $k=\infty$ to a.s.~results for Bessel processes of type A for finite multiplicities $k$:

\begin{theorem}\label{limit-theorem-a-final-intro}
Let $\mu\in M^1(\mathbb R)$ be a probability measure with compact support, and let
 $(x_{N,n})_{N\ge1, 1\le n\le N}\subset\mathbb R$ 
with $x_{N, n-1}\ge x_{N, n}$ for  $2\le n\le N$ such that the  measures
 in (\ref{starting-empirical-a-intro})
tend weakly to  $\mu$ for $N\to\infty$.

 For $k\geq 1/2$ and $N\in\mathbb N$, consider the renormalized 
Bessel processes $(\tilde X_{t,k})_{t\geq 0}$  with  start in
$(x_{N,1}, \ldots,x_{N,n})\in C_N^A$.
Then, for  $t\ge0$, the empirical measures
\begin{equation}\label{SDE-empirical-a-intro}
\tilde\mu_{N,t}:= \frac{1}{N}(\delta_{\tilde X_{t,k}^1/\sqrt N}+\ldots +\delta_{\tilde X_{t,k}^N/\sqrt N}).
\end{equation}
 tend weakly to 
 $\mu_{sc, 2\sqrt{t}}\boxplus \mu$ a.s.~for $N\to\infty$.
\end{theorem}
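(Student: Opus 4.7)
\medskip

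\noindent\textbf{Proof plan.} The overall strategy is to translate the renormalized SDE (\ref{SDE-A-normalized-intro}) into a system of evolution equations for the moments $m_p^{(N)}(t):=\int_{\mathbb R}x^p\,d\tilde\mu_{N,t}(x)=\frac{1}{N^{1+p/2}}\sum_{i=1}^N(\tilde X_{t,k}^i)^p$ and to show that, up to martingale perturbations that vanish as $N\to\infty$, these moments satisfy the same recursion as in the ODE case of Theorem~\ref{general-free-convolution-a-intro}. Applying It\^o's formula to $x\mapsto x^p$ and summing over $i$ yields, for each $p\ge 2$,
\begin{equation*}
d\Bigl(\sum_i(\tilde X_{t,k}^i)^p\Bigr)
= p\sum_i (\tilde X_{t,k}^i)^{p-1}\sum_{j\ne i}\frac{dt}{\tilde X_{t,k}^i-\tilde X_{t,k}^j}
+ \frac{p(p-1)}{2k}\sum_i(\tilde X_{t,k}^i)^{p-2}dt
+ \frac{p}{\sqrt k}\sum_i(\tilde X_{t,k}^i)^{p-1}dB_t^i.
\end{equation*}
Using the symmetrization identity $\sum_i a_i^{p-1}\sum_{j\ne i}(a_i-a_j)^{-1}=\frac12\sum_{i\ne j}\sum_{\ell=0}^{p-2}a_i^\ell a_j^{p-2-\ell}$, and dividing by $N^{1+p/2}$, the drift term becomes $\frac{p}{2}\sum_{\ell=0}^{p-2}m_\ell^{(N)}(t)m_{p-2-\ell}^{(N)}(t)\,dt$ plus a diagonal correction of order $1/N$; this is precisely the recursion exploited in the $k=\infty$ case. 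The It\^o correction contributes $\frac{p(p-1)}{2kN}m_{p-2}^{(N)}(t)\,dt$, and the martingale term has quadratic variation $\frac{p^2}{kN^2}\,m_{2p-2}^{(N)}(t)\,dt$.

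The first step is to establish \emph{uniform moment bounds}: we show by induction on $p$ that $\sup_{N,\,s\le t}E[m_{2p}^{(N)}(s)]<\infty$ for each $t>0$, using the recursion above, Gronwall's inequality, and the hypothesis that $m_{2p}^{(N)}(0)\to\int x^{2p}d\mu(x)<\infty$ (the compact support of $\mu$ giving a starting bound after a simple truncation argument on the sequence $(x_{N,n})$). The second step is to compare $m_p^{(N)}(t)$ with its deterministic ODE-counterpart from Theorem~\ref{general-free-convolution-a-intro} by working recursively in $p$: assuming $L^{2q}$-convergence of $m_0^{(N)},\ldots,m_{p-1}^{(N)}$ to the corresponding limiting moments, we apply the Burkholder--Davis--Gundy inequality to the martingale piece and the uniform moment bounds to the It\^o correction and the diagonal drift remainder, obtaining
\begin{equation*}
E\Bigl[\sup_{s\le t}\bigl|m_p^{(N)}(s)-m_p^{(\infty)}(s)\bigr|^{2q}\Bigr]=O(N^{-q}),
\end{equation*}
where $m_p^{(\infty)}(s)$ denotes the $p$-th moment of $\mu_{sc,2\sqrt s}\boxplus\mu$. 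Choosing $q\ge 2$ and invoking Borel--Cantelli along the sequence $N\in\mathbb N$ yields the almost sure convergence of $m_p^{(N)}(t)$ for each fixed $t\ge 0$ and each $p$, hence (by a countable-intersection argument over $p\in\mathbb N$) the almost sure convergence of \emph{all} moments simultaneously. Since $\mu_{sc,2\sqrt t}\boxplus\mu$ has compact support, it is determined by its moments and almost sure convergence of moments upgrades to almost sure weak convergence of $\tilde\mu_{N,t}$.

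The main obstacle is the third step: controlling the singular drift term. At first glance, the factor $(\tilde X_{t,k}^i-\tilde X_{t,k}^j)^{-1}$ threatens integrability whenever two particles come close. This is why it is essential to work with the \emph{symmetrized} form above, in which all singularities cancel and one is left with bounded polynomial expressions of the $\tilde X_{t,k}^i$; the condition $k\ge 1/2$ enters through the existence and non-collision of strong solutions of (\ref{SDE-A-normalized-intro}) supplied by \cite{GM}, ensuring that this formal manipulation is justified pathwise. A secondary technical point is that the BDG bound must be combined with uniform moment control in both $N$ and a Gronwall argument on $t\in[0,T]$ for every fixed $T$; passing from countable-$t$ a.s.\ convergence to weak convergence at every $t$ is then immediate from the continuity of $t\mapsto \mu_{sc,2\sqrt t}\boxplus\mu$ in moments and the monotonicity-in-$p$ structure of the recursion.
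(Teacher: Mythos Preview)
Your plan is correct and follows essentially the same route as the paper: It\^o's formula on power sums, symmetrization of the singular drift into polynomial form, BDG on the martingale increment, Borel--Cantelli for a.s.\ vanishing, and the moment method for weak convergence. Two remarks on how the paper organizes the same ideas and on one imprecision in your write-up.

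First, the paper separates the argument more cleanly into a deterministic and a stochastic part. For the expectations it proves a lemma not for the power sums $S_{N,l}$ directly but for all normalized symmetric monomials $m_\lambda(\tilde X_{t,k})/(N!\,N^{|\lambda|/2})$; the point is that the drift of $m_\lambda$ under $\tilde{\cal L}_k$ is a \emph{linear} combination of lower $m_{\tilde\lambda}$, so the recursion on expectations closes without ever having to control $E[S_{N,h}S_{N,l-2-h}]$. Your direct power-sum route also works, but you should note explicitly the pointwise bound $|S_{N,h}S_{N,l-2-h}|\le S_{N,l-2}$ (Young's inequality on $|x_i|^h|x_j|^{l-2-h}$) to make the inductive Gronwall step close; and since $|S_{N,p}|^r\le S_{N,pr}$ for $pr$ even, your uniform bounds on $E[S_{N,2p}]$ automatically give the higher $L^r$-bounds you need later.

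Second, your claimed rate $E[\sup_{s\le t}|m_p^{(N)}(s)-m_p^{(\infty)}(s)|^{2q}]=O(N^{-q})$ tacitly assumes a quantitative rate on $|m_p^{(N)}(0)-\int x^p d\mu|$, which is not part of the hypothesis. The paper avoids this by first showing only that the martingale increment tends to $0$ a.s.\ (BDG with exponent $2$ already suffices: the quadratic variation is $\frac{p^2}{kN^2}\int_0^t S_{N,2p-2}(s)\,ds$, whose expectation is $O(N^{-2})$ and hence summable), and then arguing by induction on $p$ that the remaining drift expression converges a.s.\ to the ODE recursion limit, using only that the initial moments converge without any rate. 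You should restructure your second step in the same way; then the choice $q\ge2$ becomes unnecessary.
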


We now turn to the main results for the case B in the Sections  4 and 5.
The first result is  analogous to Theorem \ref{general-free-convolution-a-intro}:

\begin{theorem}\label{free-convolution-b-intro}
Let $\mu\in M^1([0,\infty[)$ be a probability measure with compact support.
Let
 $(x_{N,n})_{N\ge1, 1\le n\le N}\subset[0,\infty[$ 
with $(x_{N, 1},\ldots, x_{N, N})\in C_N^B$ such that the  measures
\begin{equation}\label{empirical-measure-start-b-intro}
\mu_{N,0}:= \frac{1}{N}(\delta_{x_{N, 1}/\sqrt{2N}}+\ldots \delta_{x_{N, N}/\sqrt{2N}})
\end{equation}
tend weakly to  $\mu$ for $N\to\infty$. Consider  the  solutions
$\phi_N$  of (\ref{basic-ode-b-intro}) with start in
$(x_{N,1},\cdots, x_{N,N})$. If
$$\lim_{N\to\infty} \nu(N)/N=:\nu_0\geq 0,$$
then for each 
$t\in[0,\infty[$, the 
 empirical measures
$$\mu_{N,t}:= \frac{1}{N}(\delta_{\frac{\phi_{N,1}(t)}{\sqrt{2 N}}}+\ldots +\delta_{\frac{\phi_{N,N}(t)}{\sqrt{2N}}}) \quad\quad(t\ge0),$$
 tend weakly to  $\sqrt{\mu_{MP, \nu_0, t}\boxplus (\mu_{sc, 2\sqrt{t}}\boxplus \mu_{even})^2}$
 where the symbols $\sqrt{.}$ and $.^2$ mean push forwards of probability measures under these mappings,  $\mu_{even}$ is the even part of $\mu$, and
the measures $\mu_{MP, \nu_0, t}$ are Marchenko-Pastur distributions with parameters $ \nu_0, t$.
\end{theorem}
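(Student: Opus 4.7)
Proof plan. I would mimic the moment-and-Stieltjes-transform strategy used in Section~2 for Theorem~\ref{general-free-convolution-a-intro}, but applied to the squared variables $w_i(t):=\phi_{N,i}(t)^2$ rather than the $\phi_{N,i}$ themselves. This choice is natural because $\mu_{N,t}$ is supported on $[0,\infty[$, so it is determined by its push-forward under squaring $\sigma_{N,t}:=\tfrac{1}{N}\sum_{i=1}^N\delta_{w_i(t)/(2N)}$; weak convergence of $\mu_{N,t}$ is equivalent to that of $\sigma_{N,t}$, and the claimed limit of $\mu_{N,t}$ is precisely $\sqrt{\sigma_{\infty,t}}$ with $\sigma_{\infty,t}:=\mu_{MP,\nu_0,t}\boxplus(\mu_{sc,2\sqrt t}\boxplus\mu_{even})^2$. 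The even part $\mu_{even}$ will appear automatically because $\sigma_{N,t}$ sees only the even moments of $\mu_{N,t}$. Differentiating (\ref{basic-ode-b-intro}) yields the Laguerre-type ODE
\[
\frac{dw_i}{dt}\;=\;2\nu\,+\,2\sum_{j\ne i}\frac{w_i}{w_i-w_j}.
\]

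I would next derive a moment recursion. Setting $M_n(t):=\sum_{i}w_i(t)^n$ and using the antisymmetrization identity
\[
2\sum_{i\ne j}\frac{w_i^n}{w_i-w_j}\;=\;\sum_{k=0}^{n-1}M_k\,M_{n-1-k}\,-\,n\,M_{n-1},
\]
one obtains $\dot M_n=n\sum_{k=0}^{n-1}M_kM_{n-1-k}+n(2\nu-n)M_{n-1}$, and after normalization to $\hat M_n(t):=M_n(t)/(N(2N)^n)=\int w^n\,d\sigma_{N,t}(w)$,
\[
\frac{d\hat M_n}{dt}\;=\;\frac{n}{2}\sum_{k=0}^{n-1}\hat M_k\,\hat M_{n-1-k}\,+\,\frac{n(2\nu-n)}{2N}\,\hat M_{n-1}.
\]
Compactness of $\operatorname{supp}\mu$ yields uniform-in-$N$ a priori bounds on $\hat M_n$ on compact time intervals, and an induction on $n$ with Gr\"onwall then gives $\hat M_n(t)\to m_n(t)$ uniformly on compacts, where (using $\nu(N)/N\to\nu_0$) $(m_n)$ is the unique solution of the limit system
\[
\frac{dm_n}{dt}\;=\;\frac{n}{2}\sum_{k=0}^{n-1}m_k m_{n-1-k}+n\nu_0\,m_{n-1},\qquad m_0\equiv1,
\]
with initial data $m_n(0)=\int x^{2n}\,d\mu(x)$. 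The bounds also force $(m_n(t))$ to be the moment sequence of a unique compactly supported probability measure on $[0,\infty[$.

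The main obstacle is identifying this measure with $\sigma_{\infty,t}$. Encoding the recursion in the Stieltjes transform $G(t,z)=\sum_{n\ge0}m_n(t)z^{-n-1}$ turns the moment system into the Burgers-type equation
\[
\partial_t G\,+\,(zG+\nu_0)\,\partial_z G\,+\,\tfrac{1}{2}G^2\;=\;0.
\]
I would then verify that the Stieltjes transform of $\sigma_{\infty,t}$ satisfies the same PDE by decomposing the target into three successive free-probabilistic operations: the semicircular free convolution $\mu_{even}\mapsto\mu_{sc,2\sqrt t}\boxplus\mu_{even}$ (governed by the standard semicircular free Burgers equation on the level of Stieltjes transforms), the push-forward under squaring of the resulting symmetric measure (acting on Stieltjes transforms via the odd-function relation $h(w)=g(\sqrt w)/\sqrt w$), and the Marchenko-Pastur free convolution $\cdot\mapsto\mu_{MP,\nu_0,t}\boxplus\cdot$ (additive on the R-transform and contributing, through the rational R-transform of $\mu_{MP,\nu_0,t}$, precisely the $\nu_0\,\partial_z G$ drift). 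Combining the three pieces to match the PDE is the delicate step; once done, uniqueness of the compactly supported moment problem yields $\sigma_{N,t}\Rightarrow\sigma_{\infty,t}$, and pushing forward under $\sqrt{\cdot}$ completes the proof.
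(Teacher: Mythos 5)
Your overall strategy is the paper's: square the particles, derive the triangular moment recursion, pass to the limit $N\to\infty$, encode the limit moments in a Stieltjes transform satisfying a Burgers-type PDE, and identify the limit via free-probability transforms plus the determinate (here compactly supported) moment problem. The existence-and-convergence part of your plan (a priori bounds, induction on $n$, triangularity of the system) is sound and matches Lemma \ref{recurrence-sl-laguerre-det} and Proposition \ref{general-free-convolution-b-firststep}. However, the step you yourself flag as ``the delicate step'' --- verifying that the Stieltjes transform of $\mu_{MP,\nu_0,t}\boxplus(\mu_{sc,2\sqrt t}\boxplus\mu_{even})^2$ satisfies the same PDE --- is exactly where the entire content of the theorem sits, and you leave it unexecuted. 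The paper's proof of Theorem \ref{free-convolution-b} gets through it by converting the Stieltjes PDE into the linear first-order PDE (\ref{pde-r-b}) for the R-transform, $R_t=\nu_0+1-2zR+z^2R_z$; the $\nu_0=0$ case is then \emph{not} verified analytically but imported from Theorem \ref{general-free-convolution-b-null}, which is proved by the symmetrization trick of doubling the configuration to $(x_{N,1},\dots,x_{N,N},-x_{N,N},\dots,-x_{N,1})\in C_{2N}^A$ and invoking the type-A result of Section 2; finally one checks that adding the explicit Marchenko--Pastur R-transform $R_{MP,\nu_0,t}(z)=\nu_0t/(1-tz)$ from (\ref{r-transform-mp}) solves the full PDE, and additivity of $R$ under $\boxplus$ finishes. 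Your alternative of pushing the Stieltjes transform through the three operations directly (semicircular flow, then squaring via $G_{\mu^2}(w)=G_\mu(\sqrt w)/\sqrt w$ for symmetric $\mu$, then MP convolution) is plausible but strictly messier, and without carrying it out the identification is not proved. You should at least import the $\nu_0=0$ symmetrization reduction, which is what makes the computation close.

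A second, concrete issue: your limiting recursion $\dot m_n=\tfrac n2\sum_{k}m_km_{n-1-k}+n\nu_0m_{n-1}$ differs by a factor $2$ in the quadratic term from the paper's (\ref{recurrence-laguerre}), because you differentiated (\ref{basic-ode-b-intro}) literally, whereas Section 4 works with the ODE (\ref{basic-ode-b}) whose interaction term is $\sum_{j\ne i}2x_i/(x_i^2-x_j^2)$ --- the version consistent with the generator (\ref{def-L-B}) and with Lemma \ref{special-solution-B}. With your normalization the test case $\mu=\delta_0$, $\nu_0=0$ yields $m_n(t)=C_n(t/2)^n$, i.e.\ $(\mu_{sc,\sqrt{2t}})^2$ rather than the claimed $(\mu_{sc,2\sqrt t})^2$, so your constants cannot match the stated limit. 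This traces back to an inconsistency in the paper itself between the introduction and Section 4, but you must resolve it (restore the factor $2$, or rescale time and $\nu_0$) before the final identification can succeed.
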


Again,  this  ODE-approach leads to a classical limit result on the empirical distributions
of the zeroes of the classical Laguerre polynomials $L_N^{(\alpha)}$ for $N\to\infty$ in Section 4;
see also \cite{G, KM2} and references there for other proofs of these facts.
Moreover, this result admits the following extension:

\begin{theorem}\label{free-convolution-b-random-intro}
Let $\mu\in M^1([0,\infty[)$ be a probability measure with compact support.
Let
 $(x_{N,n})_{N\ge1, 1\le n\le N}\subset[0,\infty[$ 
with $(x_{N, 1},\ldots, x_{N, N})\in C_N^B$ such that the  measures in (\ref{empirical-measure-start-b-intro})
tend weakly to  $\mu$. Consider the normalized Bessel processes
$(\tilde{X}_{t,k})_{t\ge0}$ of type B with start in $(x_{N,1},\ldots,x_{N,N})\in C_N^B $.
Then, for each $t\ge0$,  and 
$\lim_{N\to\infty} \nu(N)/N=:\nu_0\geq 0$,
 the measures
 $$\mu_{N,t}:= \frac{1}{N}(\delta_{\frac{\tilde{X}^1_{t,k}}{\sqrt{2 N}}}+\ldots +\delta_{\frac{\tilde{X}^N_{t,k}}{\sqrt{2N}}}) $$
tend a.s.~weakly to  $\sqrt{\mu_{MP, \nu_0, t}\boxplus (\mu_{sc, 2\sqrt{t}}\boxplus \mu_{even})^2}$.
\end{theorem}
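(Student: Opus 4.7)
The strategy mirrors the passage from Theorem \ref{general-free-convolution-a-intro} to Theorem \ref{limit-theorem-a-final-intro}: I reduce the SDE statement to the deterministic ODE statement of Theorem \ref{free-convolution-b-intro} by showing that the stochastic perturbation around the ODE trajectory vanishes almost surely as $N\to\infty$. Since the target measure $\sqrt{\mu_{MP,\nu_0,t}\boxplus(\mu_{sc,2\sqrt t}\boxplus\mu_{even})^2}$ is compactly supported in $[0,\infty[$, it suffices to prove a.s.\ convergence of every polynomial moment
\[ S^{(p)}_N(t):=\frac{1}{N}\sum_{i=1}^N \bigl(\tilde X^i_{t,k}/\sqrt{2N}\bigr)^p, \qquad p\in\mathbb N. \]

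The central calculation is an It\^o moment identity. Applying It\^o's formula to $(\tilde X^i_{t,k})^p$ via (\ref{SDE-B-normalized-intro}), summing over $i$ and normalizing, one represents $S^{(p)}_N(t)$ as the sum of the initial moment, a drift integral $\int_0^t D_p[\tilde\mu_{N,s}]\,ds$ whose integrand is \emph{precisely} the drift functional driving the moment ODE used in proving Theorem \ref{free-convolution-b-intro}, a quadratic-variation correction of order $1/(\beta N)$ coming from $\tfrac{p(p-1)}{2\beta}(\tilde X^i)^{p-2}\,dt$, and a local martingale $M^{(p)}_N$ with
\[ \langle M^{(p)}_N\rangle_t = \frac{p^2}{\beta N^2(2N)^p}\sum_{i=1}^N\int_0^t (\tilde X^i_{s,k})^{2p-2}\,ds. \]
An induction on $p$, exploiting sign structure in the drift of $S^{(2)}_N$ together with the non-collision/non-absorption property of these SDEs from \cite{GM}, yields a priori a.s.\ bounds on the $S^{(p)}_N$ that are uniform in $N$ on compact time intervals. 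These bounds in turn force $\langle M^{(p)}_N\rangle_t=O(N^{-2})$, so the Burkholder-Davis-Gundy inequality gives $\mathbb E\,\sup_{s\le t}|M^{(p)}_N(s)|^{2q}=O(N^{-2q})$ for every $q\in\mathbb N$. Markov's inequality plus Borel-Cantelli then force $\sup_{s\le t}|M^{(p)}_N(s)|\to 0$ a.s., and the It\^o correction vanishes a.s.\ by the same logic.

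To finish, I subtract the integral identity for $S^{(p)}_N(t)$ from the analogous pure-drift identity satisfied by the ODE moments of Theorem \ref{free-convolution-b-intro} (with the same initial configuration $(x_{N,1},\dots,x_{N,N})$). On the a priori bounded moment region, $\mu\mapsto D_p[\mu]$ is Lipschitz in the moments of order $\le p$, so an induction on $p$ combined with Gronwall's inequality upgrades the a.s.\ smallness of the martingale and It\^o-correction terms into a.s.\ convergence of $S^{(p)}_N(t)$ to the corresponding ODE moments, which by Theorem \ref{free-convolution-b-intro} are precisely the moments of $\sqrt{\mu_{MP,\nu_0,t}\boxplus(\mu_{sc,2\sqrt t}\boxplus\mu_{even})^2}$.

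The main obstacle I expect is the a priori a.s.\ control of $S^{(p)}_N$ uniformly in $N$: the drift in (\ref{SDE-B-normalized-intro}) has singularities both on $\{x_i=x_j\}$ and on $\{x_i=0\}$, so one cannot simply invoke classical SDE stability theory as in \cite{P}. The rescue is that these singular terms appear with favorable signs in the evolution of $S^{(2)}_N$ (self-repulsion plus attraction from the origin balancing the drift $\nu/\tilde X^i$), which after one uses the non-collision result of \cite{GM} provides enough cancellation to close the induction. Once this a priori step is in place, the rest of the argument is a mechanical BDG + Borel-Cantelli + Gronwall transfer entirely parallel to the A-case treatment in Theorem \ref{limit-theorem-a-final-intro}.
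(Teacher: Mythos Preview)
Your overall architecture (It\^o formula, BDG, Borel--Cantelli, then induction and comparison with the ODE moments) matches the paper's, but the step you flag as ``the main obstacle'' is a genuine gap, and your proposed cure does not close it. The bracket $\langle M^{(p)}_N\rangle_t$ involves the \emph{higher} moment $S^{(2p-2)}_N$, so an induction on $p$ for a.s.\ uniform bounds cannot bootstrap: to bound $S^{(p)}_N$ you must first control $M^{(p)}_N$, which needs $S^{(2p-2)}_N$. Your ``sign structure in the drift of $S^{(2)}_N$'' works only at $p=2$ (indeed $\sum_i(\tilde X^i_{t,k})^2$ is a one--dimensional squared Bessel process), but it does not propagate: the crude estimate $\sum_i x_i^{2l}\le(\sum_i x_i^2)^l$ loses a factor $N^{l-1}$ after normalization, so neither a.s.\ bounds nor $L^q$ bounds on $S^{(2p-2)}_N$ follow from it.

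The paper breaks the circularity differently: it first proves convergence of the \emph{expectations} $E[S_{N,l}(t)]\to c_l(t)$ by a separate induction over the symmetric monomials $m_\lambda$ (Lemma \ref{general-limit-expectations-a} and its type--$B$ analogue in Theorem \ref{semicircle-mp-B}). That lemma is what supplies the missing input $E[S^{(2p-2)}_N]=O(1)$, whence $E[\langle M^{(p)}_N\rangle_T]=O(N^{-2})$ is summable. BDG with exponent $2$ then already gives $P(\sup_{s\le T}|M^{(p)}_N(s)|>\epsilon)=O(N^{-2})$, and Borel--Cantelli finishes the martingale; there is no need for higher BDG exponents $2q$. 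After that, the a.s.\ convergence of $S^{(p)}_N$ follows by the induction on $p$ you describe (this is exactly the proof of Theorem \ref{semicircle-A}, transported to type $B$). So your Gronwall/Lipschitz step at the end is fine once the martingale is handled, but the ``a priori a.s.\ bound'' you rely on to handle it should be replaced by the convergence--in--expectation lemma. Note also that the paper works throughout with the even moments (equivalently, moments of the squared process), which is harmless since $\tilde X^i_{t,k}\ge 0$ on $C_N^B$.
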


The description of the limits in Theorems \ref{free-convolution-b-intro} and \ref{free-convolution-b-random-intro}
seems to be new; a partial result on the PDEs of the Stieltjes transforms of
the limits can be found in  \cite{CG}.

Finally, in Sections 6-8 we turn to  Dunkl processes.
For the root systems $A_{N-1}$, the Dunkl processes differ from the corresponding Bessel processes only by additional
permutations of particles. As
these permutation have no influence to the limit theorems 
\ref{general-free-convolution-a-intro} and \ref{limit-theorem-a-final-intro}, the transition from Bessel to Dunkl processes
leads to the same results; we thus
do not  study this case.

However, for root systems of type $B$, the transition from Bessel to Dunkl processes leads to  additional random sign-changes of
all particles even in the freezing case $\beta=\infty$ and thus to new effects.
To explain the main results, we first recapitulate some notations. We fix some multiplicity
 $k=(k_1,k_2)\in [0,\infty[^2$ for the root system $B_N$  and
 write these constants as $(k_1,k_2)=(\beta,\nu\beta)$ with $\beta>0$ and $\nu\ge0$ as above.
 By \cite{RV1, RV2, CGY}, the associated renormalized Dunkl processes $(\tilde X_{t,\nu,\beta})_{t\ge0}$ on $\mathbb R^N$ are
 then defined as  Feller processes on 
$\mathbb R^N$ with the generators 
\begin{equation}\label{Dunkl-Laplacian-B-renormalized}
  \tilde{\cal L}_{k_0,\beta}u(x):= \frac{1}{2\beta}\Delta u(x)+ L_\nu u(x)
\end{equation}
for $u\in C_c^2(\mathbb R^N)$ where 
\begin{align}\label{generator-frozen-b-intro}
L_\nu u(x)&:= \sum_{i=1}^N \Bigl(\sum_{j:\> j\ne i} \frac{2x_i}{x_i^2-x_j^2}+\frac{\nu}{x_i}\Bigr) u_{x_i}(x)
+\frac{\nu}{2}\sum_{i=1}^N \frac{u(\sigma_ix)-u(x)}{x_i^2}\notag\\
&+\frac{1}{2} \sum_{i,j: \> j\ne i} \Bigl(\frac{u(\sigma_{i,j}x)-u(x)}{(x_i-x_j)^2}+\frac{u(\sigma_{i,j}^-x)-u(x)}{(x_i+x_j)^2}\Bigr)
\end{align}
is, by definition, the generator of the frozen process with $\beta=\infty$. $\sigma_i, \sigma_{i,j}, \sigma_{i,j}^-$ $(i\ne j$) denote reflections on  $\mathbb R^N$
  where $\sigma_i$ changes the sign of the $i$-th coordinate, $\sigma_{i,j}$ exchanges the coordinates $i,j$, and
  $\sigma_{i,j}^-$ exchanges the coordinates $i,j$ and changes the signs of these coordinates in addition.

If the starting measure $\mu\in M^1(\mathbb R)$ of a sequence of such renormalized Dunkl processes is symmetric,
then we may choose the starting sequences
$(x_{N,n})_{N\ge1, 1\le n\le N}\subset \mathbb R$ as e.g.~in Theorem \ref{free-convolution-b-intro}  in a symmetric way, and
 symmetry arguments lead to symmetric extensions of the Marchenko-Pastur limit theorems 
\ref{free-convolution-b-intro} and \ref{free-convolution-b-random-intro}.

However, for non-symmetric starting configurations, completely new limit distributions appear.
We study the analytic part of this problem in Section 7 for the frozen case
where we describe the even parts of the limit measures via  Theorem \ref{free-convolution-b-intro} and \ref{free-convolution-b-random-intro}.
while the odd parts are described via their Stieltjes transforms. For this we shall first derive  linear PDEs for
the Stieltjes transforms of the odd parts, and then we shall deduce these Stieltjes transforms in an explicit way.
Unfortunately, we are not able to describe the associated probability measures  via free convolutions in  general.
However, 
for the case  $\lim_{N\to\infty} \nu(N)/N=\nu_0=0$ and  a quarter circle distribution on $[0,2]$
as starting measure, we are able to compute the associated measures for all times $t\ge0$ in an explicit way;
see Example \ref{quarter-example}.
After this analytic part in Section 7 on the frozen case, we extend 
Theorem \ref{free-convolution-b-random-intro} to renormalized Dunkl processes $(\tilde X_{t,\nu,\beta})_{t\ge0}$ in Section 8.

\section{A sequence of ODEs and the semicircle law} 

In this section we study  a sequence of ODEs with $N\ge2$ equations which are closely related to
the zeroes of the  Hermite polynomials $H_N$. We show that 
the empirical distributions of the $N$-dimensional solutions
 of these ODEs for $N\to\infty$ are related  to the semicircle law. We identify the
 limits as free additive convolution of the semicircle law with the law associated to the starting value.
As a special case this  leads to the well-known semicircle law for 
the empirical distributions of the zeroes of $H_N$.
Let us start with the  ODEs:

\begin{ODE}\label{ODE-Hermite}
Let $N\ge 2$. On the interior of the closed Weyl chamber
$$C_N^A:=\{x\in \mathbb R^N: \quad x_1\ge x_2\ge\ldots\ge x_N\}\subset \mathbb R^N$$
of type A, consider the  $\mathbb R^N$-valued function
$$H(x):=
\Bigl( \sum_{j\ne1} \frac{1}{x_1-x_j},\ldots,\sum_{j\ne N} \frac{1}{x_N-x_j} \Bigr).$$
It is shown in \cite{VW2}
 that for each initial condition $x_0\in C_N^A$, the ODE
\begin{equation}\label{basic-ode-a}
\frac{dx}{dt}(t) =H(x(t)), \quad\quad x(0)=x_0
\end{equation}
has a unique solution for all $t\ge0$ in the sense that $[0,\infty[\to  C_N^A$, $t\mapsto x(t)$
is continuous such that  $x(t)$ is in the interior of  $C_N^A$ and solves the ODE in  (\ref{basic-ode-a}) for $t>0$.
For the ODE  (\ref{basic-ode-a})   we also refer to \cite{AV1, VW}. 
We  denote  solutions of the ODE (\ref{basic-ode-a})  by
 $\phi_N:=(\phi_{N,1},\ldots,\phi_{N,N})$  where we suppress the dependence on $x_0$.
\end{ODE}

For  $x_0=0\in C_N^A$, the  solution of (\ref{basic-ode-a})
  can be expressed via the 
zeroes of the  Hermite polynomial $H_N$   where, as usual,  the $(H_N)_{N\ge 0}$ are orthogonal w.r.t.
 the density  $e^{-x^2}$ on $\mathbb R$  as e.g.~in \cite{S}. For this we need the following fact  due to Stieltjes;
see Section 6.7 of \cite{S} or \cite{AKM1}:

\begin{lemma}\label{char-zero-A}
Let $z\in C_N^A$. Then $z:=(z_1,\ldots,z_N)$
 consists of the ordered zeroes of  $H_N$ if and only if
$$ z_i= \sum_{j: j\ne i} \frac{1}{z_i-z_j} \quad\text{for} \quad
i=1,\ldots,N.$$
\end{lemma}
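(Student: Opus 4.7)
The plan is to connect the Stieltjes relation $z_i = \sum_{j\ne i} 1/(z_i - z_j)$ to the Hermite differential equation $y'' - 2xy' + 2Ny = 0$, which $H_N$ satisfies. The bridge is the following local identity: if $p(x) = \prod_i (x - z_i)$ is a monic polynomial with pairwise distinct real roots $z_1, \ldots, z_N$, then writing $p(x) = (x - z_k) g_k(x)$ with $g_k(z_k) \ne 0$ gives $p'(z_k) = g_k(z_k)$ and $p''(z_k) = 2 g_k'(z_k)$, so
\begin{equation*}
\frac{p''(z_k)}{p'(z_k)} \;=\; \frac{2 g_k'(z_k)}{g_k(z_k)} \;=\; 2 \sum_{j \ne k} \frac{1}{z_k - z_j}.
\end{equation*}
Hence the Stieltjes system is equivalent to $p''(z_k) = 2 z_k p'(z_k)$ at every zero of $p$.

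For the forward direction, I would take $z \in C_N^A$ to be the vector of ordered zeros of $H_N$; these are simple and strictly decreasing by the classical consequence of orthogonality, so the sums in the statement are well defined. Evaluating the Hermite ODE at $x = z_k$ and using $H_N(z_k) = 0$ yields $H_N''(z_k) = 2 z_k H_N'(z_k)$, which together with the local identity above gives the desired relation $z_k = \sum_{j \ne k} 1/(z_k - z_j)$.

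For the converse, I start with $z \in C_N^A$ satisfying the Stieltjes system. The system itself forces pairwise distinct coordinates (otherwise some term of the sum would blow up), hence $z_1 > z_2 > \cdots > z_N$. Form $p(x) := \prod_i (x - z_i)$ and introduce
\begin{equation*}
q(x) := p''(x) - 2x p'(x) + 2N p(x).
\end{equation*}
A direct look at the leading coefficients shows that the $x^N$ contributions from $-2x p'$ and $2N p$ cancel, so $\deg q \le N-1$. The local identity combined with the Stieltjes relations gives
\begin{equation*}
q(z_k) \;=\; 2 p'(z_k) \Bigl( \sum_{j \ne k} \frac{1}{z_k - z_j} - z_k \Bigr) \;=\; 0 \qquad (k = 1, \ldots, N),
\end{equation*}
so $q$ has $N$ distinct zeros and therefore vanishes identically. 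Thus $p$ is a monic polynomial of degree $N$ solving the Hermite ODE, so $p$ agrees with $H_N$ up to normalization, and the ordered coordinates of $z$ are exactly the ordered zeros of $H_N$.

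I do not anticipate a serious obstacle. The single nontrivial ingredient is the uniqueness (up to scalar) of the degree-$N$ polynomial solution of the Hermite ODE, but this is standard, since the equation is the eigenvalue problem of a Sturm--Liouville operator whose polynomial eigenspaces are one-dimensional, and it may be invoked from any orthogonal-polynomials reference such as \cite{S}. Everything else reduces to the elementary computation of $p''/p'$ at a simple root and a degree count, as sketched above.
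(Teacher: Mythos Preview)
Your argument is correct and is exactly the classical Stieltjes proof. Note, however, that the paper itself does not give a proof of this lemma at all: it simply records the statement and refers to Section~6.7 of \cite{S} and to \cite{AKM1}. What you have written is precisely the argument one finds in the Szeg\H{o} reference---the identity $p''(z_k)/p'(z_k)=2\sum_{j\ne k}(z_k-z_j)^{-1}$ at a simple root, combined with the Hermite ODE $y''-2xy'+2Ny=0$ and a degree count on $q=p''-2xp'+2Np$. So there is nothing to compare: you have supplied the standard proof that the paper chose to omit, and every step is sound.
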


Lemma \ref{char-zero-A}  immediately implies the following result; see \cite{AV1}:

\begin{corollary}\label{special-solution}
Let $z\in C_N^A$ as above and $c\ge 0$. Then $\phi_N(t)= \sqrt{2t+c^2}\cdot {z} $
is a solution of  (\ref{basic-ode-a}).
\end{corollary}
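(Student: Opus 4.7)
The plan is to verify the proposed formula by direct substitution into the ODE (\ref{basic-ode-a}), using Lemma \ref{char-zero-A} as the crucial algebraic input. Writing $\lambda(t) := \sqrt{2t+c^2}$, I would first observe that the vector field $H$ is homogeneous of degree $-1$, i.e.\ $H(\lambda y) = \lambda^{-1} H(y)$ for every $\lambda>0$ and $y$ in the interior of $C_N^A$. Hence for any scalar function $\lambda(t)>0$ and any fixed $z$ in the interior of $C_N^A$, the ansatz $\phi_N(t) = \lambda(t)\cdot z$ satisfies
\begin{equation*}
\frac{d\phi_N}{dt}(t) = \lambda'(t)\, z, \qquad H(\phi_N(t)) = \lambda(t)^{-1} H(z).
\end{equation*}

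Next, I would apply Lemma \ref{char-zero-A}: since $z$ consists of the ordered zeroes of $H_N$, we have $H(z)_i = \sum_{j\ne i}(z_i-z_j)^{-1} = z_i$ for every $i$, i.e.\ $H(z) = z$. Substituting into the ODE reduces it to the scalar identity $\lambda'(t) = \lambda(t)^{-1}$, equivalently $\tfrac{d}{dt}\lambda(t)^2 = 2$. With the choice $\lambda(t) = \sqrt{2t+c^2}$ this holds identically, and moreover $\lambda(0)=c\ge 0$, so $\phi_N(0)=c\cdot z \in C_N^A$. Thus $\phi_N$ is continuous on $[0,\infty[$, lies in the interior of $C_N^A$ for $t>0$ (since $\lambda(t)>0$ there), and solves (\ref{basic-ode-a}) in the required sense.

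There is no real obstacle: the only nontrivial ingredient is the characterization of the Hermite zeroes in Lemma \ref{char-zero-A}, which the corollary was set up to exploit. The only small subtlety is the case $c=0$, where $\phi_N(0)=0$ lies on the boundary of $C_N^A$; here the existence statement from the ODE paragraph guarantees that the solution immediately enters the interior, and one checks that our explicit formula matches this unique solution for $t>0$ by continuity and by the uniqueness asserted in the ODE. This is what makes the rescaled Hermite zeroes the canonical solution starting from the origin.
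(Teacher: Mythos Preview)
Your proof is correct and is precisely the argument the paper has in mind: the paper does not spell out a proof but simply states that Lemma~\ref{char-zero-A} ``immediately implies'' the corollary (with a reference to \cite{AV1}), and your direct verification via the homogeneity $H(\lambda y)=\lambda^{-1}H(y)$ together with $H(z)=z$ from Lemma~\ref{char-zero-A} is exactly that immediate implication.
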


We now turn to the empirical measures of  solutions of  (\ref{basic-ode-a}). 
We choose starting sequences $(x_{N,k})_{1\le k\le N}\subset \mathbb R$ with
 $(x_{N,1},\ldots,x_{N,N})\in C_N^A$ such that for each $N$ the ODE (\ref{basic-ode-a}) has a solution with start in
 $(x_{N,1},\ldots,x_{N,N})$.  For $t\ge0$ consider the associated solutions  $\phi_N(t)$  and  normalized
 empirical measures
\begin{equation}\label{emp-measure-basic-a}
\mu_{N,t}:= \frac{1}{N}(\delta_{\phi_{N,1}(t)/\sqrt N}+\ldots +\delta_{\phi_{N,N}(t)/\sqrt N})\in M^1(\mathbb R).
\end{equation}

The aim of this section is to characterize the limiting empirical measures $\mu_t$ of $\mu_{N,t}$ 
for $N\to\infty$ and  $t\ge0$ under the condition that the $\mu_{N,0}$ converge to some probability measure $\mu$.
For this we first derive a recurrence equation for the  moments of the $\mu_{N,t}$. This will lead
to   PDEs for the  Stieltjes transforms of the $\mu_{N,t}$ and  $\mu_t$. With the aid of the  R-transform
from free probability (see Section 5.3 of \cite{AGZ}) we then identify the $\mu_t$ as free  additive convolutions
of $\mu$ with suitably scaled semicircle laws.
 For more details on free probability we refer to \cite{NS}.

Denote the $l$-th moment $(l\in\mathbb N_0$) of the probability measure $\mu_{N,t}$ by
\begin{equation}\label{def-moments-case-a}
S_{N,l}(t):= \int_{\mathbb R} y^l \> d\mu_{N,t}(y) = \frac{1}{N^{l/2+1}}(\phi_{N,1}(t)^l+\ldots+\phi_{N,N}(t)^l).\end{equation}
Then $S_{N,0}(t)=1$. Moreover, by  (\ref{basic-ode-a}),
\begin{equation}\label{dS1-hermite}
\frac{d}{dt}S_{N,1}(t)=\frac{1}{N^{3/2}}\sum_{i,j=1; i\ne j}^N \frac{1}{\phi_{N,i}(t)-\phi_{N,j}(t)} =0,\end{equation}
i.e., $S_{N,1}(t)=S_{N,1}(0)$ for all $t\ge0$. By the same reasons,
\begin{equation}\label{dS2-hermite}
\frac{d}{dt}S_{N,2}(t)=\frac{2}{N^{2}}\sum_{i,j=1; i\ne j}^N \frac{\phi_{N,i}(t)}{\phi_{N,i}(t)-\phi_{N,j}(t)}
=\frac{2}{N^{2}} \cdot \frac{N(N-1)}{2}=\frac{N-1}{N}
\end{equation}
and
\begin{align}\label{dS3-hermite}
\frac{d}{dt}S_{N,3}(t)=&\frac{3}{N^{5/2}}\sum_{i,j=1; i\ne j}^N \frac{\phi_{N,i}(t)^2}{\phi_{N,i}(t)-\phi_{N,j}(t)}
=\frac{3}{2N^{5/2}}\sum_{i,j=1; i\ne j}^N (\phi_{N,i}(t)+\phi_{N,j}(t))\notag\\
=& \frac{3(N-1)}{N^{5/2}}\sum_{i=1}^N \phi_{N,j}(t) = \frac{3(N-1)}{N} S_{N,1}(0).
\end{align}
These computations yield the following result for $l=0,1,2,3$:

\begin{lemma}\label{recurrence-sl-hermite-det}
Let $(x_{N,k})_{1\le k\le N}\subset\mathbb R$ be  starting sequences  such that for all  $l\in \mathbb N_0$,
$$c_l(0):=\lim_{N\to\infty} S_{N,l}(0)=\lim_{n\to\infty}  \frac{1}{N^{l/2+1}}(x_{N,1}^l+\ldots+x_{N,N}^l)<\infty$$
exists. Then
for  $l\in \mathbb N_0$,
$$c_l(t):=\lim_{N\to\infty} S_{N,l}(t)$$
exists locally uniformly in $t\in[0,\infty[$ and satisfies the recurrence relation
\begin{equation}\label{recurrence-herm}
c_l(t)=c_l(0)+ \frac{l}{2}\int_0^t\Bigl(\sum_{k=0}^{l-2} c_{l-2-k}(s)c_k(s)\Bigr)\> ds
\end{equation}
for $l\geq 2$ and start $c_0(t)=1,\,\, c_1(t)=c_1(0)$. 

For each $l\in  \mathbb N_0$, $c_l(t)$ is a polynomial in $t$
of degree at most $\lfloor l/2\rfloor$ with a nonnegative ``leading'' coefficient of order $\lfloor l/2\rfloor$.
\end{lemma}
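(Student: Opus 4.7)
The plan is to differentiate $S_{N,l}(t)$ along the flow (\ref{basic-ode-a}) and extract a closed, finite-$N$ recursion with a vanishing remainder. Using the chain rule,
\begin{equation*}
\frac{d}{dt} S_{N,l}(t) = \frac{l}{N^{l/2+1}} \sum_{i=1}^N \phi_{N,i}(t)^{l-1} \sum_{j\ne i} \frac{1}{\phi_{N,i}(t)-\phi_{N,j}(t)}.
\end{equation*}
The key algebraic step, already implicit in (\ref{dS2-hermite})--(\ref{dS3-hermite}), is symmetrization in $i\leftrightarrow j$ combined with the telescoping identity $a^{l-1}-b^{l-1}=(a-b)\sum_{k=0}^{l-2} a^{l-2-k}b^k$; this cancels the singular denominators and gives
\begin{equation*}
\frac{d}{dt} S_{N,l}(t) = \frac{l}{2N^{l/2+1}} \sum_{k=0}^{l-2} \sum_{i\ne j} \phi_{N,i}(t)^{l-2-k}\phi_{N,j}(t)^{k}.
\end{equation*}
Splitting the inner sum into the full double sum minus the diagonal and recognizing each factor as a power-sum $\sum_i \phi_{N,i}(t)^m = N^{m/2+1}\,S_{N,m}(t)$, I obtain the exact finite-$N$ identity
\begin{equation*}
\frac{d}{dt} S_{N,l}(t) = \frac{l}{2}\sum_{k=0}^{l-2} S_{N,l-2-k}(t)\,S_{N,k}(t) \;-\; \frac{l(l-1)}{2N}\,S_{N,l-2}(t).
\end{equation*}

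Next I would run an induction on $l$ to pass to the limit $N\to\infty$. The cases $l=0,1$ are immediate since $S_{N,0}\equiv 1$ and $S_{N,1}$ is conserved by (\ref{dS1-hermite}). For the induction step with $l\ge 2$, assume locally uniform convergence $S_{N,m}\to c_m$ on $[0,\infty[$ for all $m\le l-2$; in particular each $S_{N,m}$ is then locally uniformly bounded. Integrating the identity above from $0$ to $t$ and applying the triangle inequality, the $O(1/N)$ remainder vanishes uniformly on compact time intervals, while the convolution-type sum converges locally uniformly to $\sum_{k=0}^{l-2} c_{l-2-k}(s)c_k(s)$. This yields locally uniform convergence $S_{N,l}(t)\to c_l(t)$, where $c_l$ satisfies the claimed integral equation (\ref{recurrence-herm}).

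Finally, for the polynomial structure I would run a parallel induction on $l$. The cases $l=0,1$ are trivial. For $l\ge 2$, by induction each $c_k$ is a polynomial of degree at most $\lfloor k/2\rfloor$ with nonnegative leading coefficient, so every product $c_{l-2-k}\,c_k$ has degree at most $\lfloor (l-2-k)/2\rfloor+\lfloor k/2\rfloor\le\lfloor (l-2)/2\rfloor$, and whenever this maximal degree is attained its leading coefficient is a product of two nonnegative numbers. Summing over $k$ and integrating in $t$ increases the degree by exactly one, yielding a polynomial of degree at most $\lfloor l/2\rfloor$ whose top coefficient is a sum of nonnegative contributions. The main technical point requiring care is the passage to the limit: one must build a locally uniform \emph{bound} on $S_{N,l-2}$ into the induction hypothesis (otherwise the $O(1/N)$ remainder is not obviously negligible on large compact intervals), but since $c_{l-2}$ is continuous and the convergence is locally uniform, this bound comes for free.
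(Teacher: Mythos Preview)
Your proof is correct and follows essentially the same route as the paper: you derive the identical finite-$N$ identity $\frac{d}{dt}S_{N,l}=\frac{l}{2}\sum_{k=0}^{l-2}S_{N,l-2-k}S_{N,k}-\frac{l(l-1)}{2N}S_{N,l-2}$ via symmetrization and the telescoping factorization, then run the same induction on $l$ for both the locally uniform convergence and the polynomial structure. The only cosmetic difference is that the paper treats $l=2,3$ by separate explicit computations before stating the general recursion for $l\ge4$, whereas you (correctly) observe that the same formula covers all $l\ge2$.
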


\begin{proof} By our preceding computations,
\begin{equation}\label{c0-3-herm}
c_0(t)=1, \quad c_1(t)=c_1(0),  \quad c_2(t)=c_2(0)+t,  \quad c_3(t)=c_3(0)+3c_1(0)t.
\end{equation}
For $l\ge4$, we have
\begin{align}
\frac{d}{dt}S_{N,l}(t)=& \frac{l}{N^{l/2+1}}\sum_{i,j=1; i\ne j}^N 
\frac{\phi_{N,i}(t)^{l-1}}{\phi_{N,i}(t)-\phi_{N,j}(t)}\notag \\
=& \frac{l}{N^{l/2+1}}\sum_{1\le i<j\le N}\frac{\phi_{N,i}(t)^{l-1}-\phi_{N,j}(t)^{l-1}}{\phi_{N,i}(t)-\phi_{N,j}(t)}
\notag\\
=& \frac{l}{2N^{l/2+1}}\sum_{i,j=1; i\ne j}^N \Bigl( \phi_{N,i}(t)^{l-2}+\phi_{N,i}(t)^{l-3}\phi_{N,j}(t)+\ldots+
\phi_{N,j}(t)^{l-2}\Bigr).
\notag\end{align}
As 
$$\frac{1}{N^{l/2+1}}\sum_{i,j=1; i\ne j}^N \phi_{N,i}(t)^{l-2-k}\phi_{N,j}(t)^k= S_{N,l-2-k}(t)S_{N,k}(t)-\frac{S_{N,l-2}(t)}{N}$$
for $k=1,2,\ldots,l-3$, and as 
$$\frac{1}{N^{l/2+1}}\sum_{i,j=1; i\ne j}^N \phi_{N,i}(t)^{l-2}=\frac{N-1}{N}S_{N,l-2}(t),$$
we get
\begin{align}\label{derivative-recursion-n-a}
\frac{d}{dt}S_{N,l}(t)&= \frac{l}{2}\Bigl(\frac{2N+1-l}{N}S_{N,l-2}(t)+
\sum_{k=1}^{l-3} S_{N,l-2-k}(t)S_{N,k}(t)\Bigr) \notag\\
&= \frac{l}{2}\Bigl(\frac{1-l}{N}S_{N,l-2}(t)+
\sum_{k=0}^{l-2} S_{N,l-2-k}(t)S_{N,k}(t)\Bigr).
\end{align}
Hence, for  $l\ge4$ we obtain in an inductive way that the limit
\begin{align}
c_l(t):=& \lim_{N\to\infty}S_{N,l}(t) =c_l(0)+ \lim_{N\to\infty}\int_0^t \frac{d}{ds}S_{N,l}(s) \> ds \notag \\
%=&c_l(0)+ \frac{l}{2}\int_0^t \Bigl(2S_{l-2}(s)+
%\sum_{k=1}^{l-3} S_{N,l-2-k}(s)S_{N,k}(s)\Bigr)\> ds\notag \\
=&c_l(0)+ \frac{l}{2}\int_0^t\Bigl(2c_{l-2}(s)+\sum_{k=1}^{l-3} c_{l-2-k}(s)c_k(s)\Bigr)\> ds\notag
\end{align}
exists locally uniformly in $t\in[0,\infty[$. Moreover, the $c_l(t)$ satisfy 
$$c_l(t)=c_l(0)+ \frac{l}{2}\int_0^t\Bigl(\sum_{k=0}^{l-2} c_{l-2-k}(s)c_k(s)\Bigr)\> ds.$$
(\ref{c0-3-herm}) and this recurrence imply by an easy induction that for each $l\in\mathbb N_0$,
$c_l(t)$ is a polynomial of degree at most $\lfloor l/2\rfloor$ with a nonnegative  coefficient for this order.
\end{proof}

For even $l$ we  next  determine the leading coefficients of the polynomials  $c_l(t)$ of order $ l/2$.
For this we
recapitulate  the Catalan numbers
\begin{equation}\label{catalan-def}
C_0:=1, \quad C_n:=\frac{1}{n+1}{2n \choose n}= {2n \choose n}-{2n \choose n+1} \quad(n\ge1)
\end{equation}
which admit the  well known  recurrence relation (see e.g.~Section 2.1.1 of \cite{AGZ}):
\begin{equation}\label{recurrence-Catalan}
C_0=C_1=1, \quad C_{n+1}= \sum_{k=0}^n C_k C_{n-k} \quad (n\ge1).
\end{equation}
We compare this with (\ref{c0-3-herm}) and (\ref{recurrence-herm}) where
 $c_l(t)$ has degree at most  $\lfloor l/2\rfloor$. A simple induction then yields:

\begin{lemma}\label{leading-Catalan}
The polynomial $c_{2l}(t)$ has the degree $l$ with the Catalan number $C_l$ as leading coefficient for 
$l\in\mathbb N_0$.
\end{lemma}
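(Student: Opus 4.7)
The plan is to argue by induction on $l$, using the recurrence (\ref{recurrence-herm}) together with the degree bound $\deg c_l(t)\le \lfloor l/2\rfloor$ already established in Lemma \ref{recurrence-sl-hermite-det}. Write $a_l$ for the coefficient of $t^{\lfloor l/2\rfloor}$ in $c_l(t)$ (so $a_l$ may a priori be zero for odd $l$). The base cases $l=0$ and $l=2$ follow from (\ref{c0-3-herm}): $a_0=1=C_0$ and $a_2=1=C_1$.

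For the inductive step, fix $m\ge 2$ and assume $a_{2j}=C_j$ for all $j<m$. Consider the integrand
$$\sum_{k=0}^{2m-2} c_{2m-2-k}(s)c_k(s).$$
The product $c_{2m-2-k}(s)c_k(s)$ has degree at most $\lfloor (2m-2-k)/2\rfloor+\lfloor k/2\rfloor$. A parity check shows that this is equal to $m-1$ when $k$ is even and equal to $m-2$ when $k$ is odd. Hence only the even-indexed terms $k=2j$, $j=0,\dots,m-1$, contribute to the coefficient of $s^{m-1}$, and this coefficient equals
$$\sum_{j=0}^{m-1} a_{2(m-1-j)}\,a_{2j} \;=\; \sum_{j=0}^{m-1} C_{m-1-j}C_j \;=\; C_m$$
by the induction hypothesis and the Catalan recurrence (\ref{recurrence-Catalan}).

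Integrating from $0$ to $t$ turns the leading $C_m s^{m-1}$ into $\tfrac{C_m}{m}t^m$ (plus lower-order terms), and the prefactor $l/2=m$ in (\ref{recurrence-herm}) cancels the $1/m$. Thus $c_{2m}(t)=C_m t^m+\text{(lower order)}$, which gives $\deg c_{2m}=m$ and $a_{2m}=C_m$, completing the induction.

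I do not expect a real obstacle here; the only point requiring a moment of care is the parity bookkeeping that kills the odd-index contributions to the top-degree coefficient. Once that is observed, the Catalan recurrence (\ref{recurrence-Catalan}) matches the structure of (\ref{recurrence-herm}) exactly, and the constants $l/2$ and $1/m$ cancel cleanly so that no spurious factor appears in front of $C_m$.
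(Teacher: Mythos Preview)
Your proof is correct and follows essentially the same approach as the paper, which simply states that comparing the base cases (\ref{c0-3-herm}) with the recurrence (\ref{recurrence-herm}) and the degree bound $\deg c_l(t)\le\lfloor l/2\rfloor$ yields the claim by a simple induction. You have merely written out the details of that induction, including the parity observation that isolates the Catalan convolution in the top-degree coefficient.
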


\begin{example}\label{semicircle}
Assume that the solutions of our ODEs satisfy $\phi_N(0)=0$ for all $N$, i.e., that $x_{N,k}=0$ for all $N,k$.
Then $c_0(0)=1$ and $c_l(0)=0$ for  $l\ge1$. Therefore  $c_0(t)=1$, 
$c_1(t)=0$, $c_2(t)=t$ and $c_3(t)=0$ for $t\ge0$. Hence, by
(\ref{recurrence-herm}),
$$c_{2l}(t)=C_lt^l\quad\text{and}\quad c_{2l+1}(t)=0 \quad (t\ge0, l\in\mathbb N_0).$$ 

We next recapitulate that for $R>0$, a random variable $X_R$ with the  semicircle law $\mu_{sc,R}$ with
density $f_R(x):= \frac{2}{\pi R^2}\sqrt{R^2-x^2}$ for $|x|\le R$ and
$f_R(x)=0$ otherwise has  the moments
$$E(X_R^{2n})=\left({R \over 2}\right)^{2n} C_n \quad\text{and}\quad E(X_R^{2n+1})=0 \quad\text{for}\quad n\ge0;$$
see  e.g.~Section 2.1.1 of \cite{AGZ}.
We thus conclude from the moment convergence theorem that 
for $t>0$ the  empirical measures
$\mu_{N,t}$ of the (renormalized)  solutions of our ODEs with start in the origin tend weakly to
$\mu_{sc,2\sqrt t}$ for $N\to\infty$.
\end{example}

If we combine Example \ref{semicircle} with Corollary \ref{special-solution} for $c=0$, $t=1/2$ there, 
we obtain the following classical result on the zeroes of the Hermite polynomials;
 see also  \cite{D, G, KM} for different proofs:

\begin{corollary}\label{classical-limit-hermite-ns} 
For $N\in\mathbb N$ let $z_1,\ldots,z_N$ be the zeroes of the Hermite polynomial $H_N$. Then the normalized 
empirical measures
$$\mu_{N}:= \frac{1}{N}(\delta_{z_1/\sqrt N}+\ldots+ \delta_{z_N/\sqrt N})$$
tend weakly to $\mu_{sc,\sqrt 2}$ for $N\to\infty$.
\end{corollary}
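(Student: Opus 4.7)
My plan is to combine Example~\ref{semicircle} with Corollary~\ref{special-solution}, applied with $c=0$ and evaluated at $t=1/2$. First I would fix $N\ge 2$, let $z=(z_1,\ldots,z_N)\in C_N^A$ be the ordered zeroes of $H_N$, and observe via Corollary~\ref{special-solution} (with $c=0$) that the curve $t\mapsto \sqrt{2t}\cdot z$ satisfies the ODE~\eqref{basic-ode-a} on $]0,\infty[$ with initial value $0\in C_N^A$. Invoking the existence-uniqueness statement in~\ref{ODE-Hermite} for the initial condition $x_0=0$, I would identify this explicit curve with the unique solution $\phi_N(t)$ of~\eqref{basic-ode-a} starting at the origin; in particular $\phi_{N,k}(1/2)=z_k$ for every $k$.

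Next I would apply Example~\ref{semicircle}, whose hypotheses cover precisely the starting configuration $x_{N,k}=0$ for all $N,k$. That example already gives the weak convergence
\[
\mu_{N,t}=\frac{1}{N}\sum_{k=1}^N\delta_{\phi_{N,k}(t)/\sqrt N}\;\longrightarrow\;\mu_{sc,\,2\sqrt t}\qquad(t>0).
\]
Specializing to $t=1/2$ and using the previous paragraph, the measure $\mu_{N,1/2}$ coincides exactly with the measure $\mu_N$ from the corollary; since $2\sqrt{1/2}=\sqrt 2$, this yields $\mu_N\to\mu_{sc,\sqrt 2}$ weakly as $N\to\infty$.

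I do not expect a serious obstacle here: both ingredients are already in hand. The only technical point worth checking is that the origin lies on the boundary of $C_N^A$, so the existence-uniqueness assertion in~\ref{ODE-Hermite} is genuinely needed in order to identify $\sqrt{2t}\cdot z$ with the canonical solution $\phi_N$ used in Example~\ref{semicircle}. Once that identification is justified, the corollary is an immediate specialization of the semicircle limit at time $t=1/2$.
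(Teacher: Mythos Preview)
Your proposal is correct and follows exactly the paper's own approach: the paper simply states that combining Example~\ref{semicircle} with Corollary~\ref{special-solution} for $c=0$, $t=1/2$ yields the result. Your additional remark about invoking the uniqueness statement in~\ref{ODE-Hermite} to identify $\sqrt{2t}\cdot z$ with the canonical solution $\phi_N$ starting at the boundary point $0$ makes the argument more explicit than the paper's one-line justification, but the substance is identical.
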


We next study the general case  with a start with an arbitrary probability measure $\mu\in M^1(\mathbb R)$ which
is determined uniquely by its moments
$c_l:=\int x^l\> d\mu(x)$ ($l\ge0$). 
This uniqueness holds in particular under the Carleman condition
\begin{equation}\label{carleman}
\sum_{l=1}^\infty c_{2l}^{-\frac{1}{2l}}=\infty;
\end{equation}
see p. 85 of \cite{A}. Moreover,   (\ref{carleman}) clearly follows from the condition
\begin{equation}\label{strong-carleman}
|c_l|\le (cl)^l \quad\quad \text{ for all}\quad l\ge0\quad \text{ and some}\quad c>0.
\end{equation}

Now let $\mu\in M^1(\mathbb R)$ be  determined uniquely by its moments $c_l$.
We choose a family
 $(x_{N,n})_{N\ge1, 1\le n\le N}\subset\mathbb R$  of numbers
with $x_{N, n-1}\ge x_{N, n}$ for  $2\le n\le N$ such that the  empirical measures
$$\mu_{N,0}:= \frac{1}{N}(\delta_{x_{N, 1}/\sqrt N}+\ldots \delta_{x_{N, N}/\sqrt N})$$
tend weakly to  $\mu$ for $N\to\infty$, i.e., by the moment convergence theorem, that
$$\lim_{N\to\infty} S_{N,l}(0):=\lim_{N\to\infty}  \frac{1}{N^{l/2+1}}(x_{N,1}^l+\ldots+x_{N,N}^l)=c_l \quad(l\ge0).$$
For  $N\ge 2$ we now consider  the solutions   $\phi_N(t)$ of 
 (\ref{basic-ode-a})  with start in $(x_{N,1},\ldots,x_{N,N})$ and the  normalized
 empirical measures
$$\mu_{N,t}:= \frac{1}{N}(\delta_{\phi_{N,1}(t)/\sqrt N}+\ldots +\delta_{\phi_{N,N}(t)/\sqrt N}) \quad\quad(t\ge0).$$

\begin{proposition}\label{general-free-convolution-a-firststep}
In the preceding setting, the limits
 $$c_l(t):=\lim_{N\to\infty} S_{N,l}(t)  \quad\quad (t\in[0,\infty[, l\ge0)$$
exist. Moreover, if the moment condition (\ref{strong-carleman}) holds for $\mu$, then
 for each 
$t\in[0,\infty[$, the sequence $(c_l(t))_{l\ge0}$ is the sequence of moments of some unique probability
measure $\mu_t\in M^1(\mathbb R)$ for which  (\ref{strong-carleman}) also holds. Moreover, 
the  $\mu_{N,t}$ tend weakly to  $\mu_t$ for $N\to\infty$.
\end{proposition}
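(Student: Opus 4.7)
The plan is to bootstrap from Lemma \ref{recurrence-sl-hermite-det}, which already provides existence of the limits $c_l(t):=\lim_{N\to\infty} S_{N,l}(t)$ locally uniformly in $t$, to a strong enough growth bound on $(c_l(t))_l$ to force both existence and uniqueness in the Hamburger moment problem, and then to conclude weak convergence $\mu_{N,t}\to\mu_t$ via the method of moments. The first assertion of the proposition is a direct instance of Lemma \ref{recurrence-sl-hermite-det} under the standing hypothesis $\lim_{N\to\infty} S_{N,l}(0)=c_l<\infty$ for every $l$, so one inherits the integral recurrence (\ref{recurrence-herm}) without further work.

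The main technical step, and the only point I expect to be delicate, is an induction on $l$ showing that the bound (\ref{strong-carleman}) propagates in time: if $|c_l(0)|\le(cl)^l$ for every $l$, then $|c_l(t)|\le(C(t)l)^l$ for every $l$ and $t\ge 0$, where $C(t):=\max(2c,\sqrt{2t})$. Assuming the bound for indices less than $l$, the triangle inequality applied to (\ref{recurrence-herm}) yields
\begin{equation*}
|c_l(t)|\le(cl)^l+\frac{l}{2}\int_0^t C(s)^{l-2}\sum_{k=0}^{l-2}(l-2-k)^{l-2-k} k^k\,ds.
\end{equation*}
Since $k\mapsto k\log k+(l-2-k)\log(l-2-k)$ is a convex function of $k$, its exponential $f(k):=k^k(l-2-k)^{l-2-k}$ attains its maximum on $\{0,\dots,l-2\}$ at the endpoints, where it equals $(l-2)^{l-2}$; hence the inner sum is at most $(l-1)(l-2)^{l-2}$, and then $l(l-1)(l-2)^{l-2}\le l^l$ reduces the inequality to $|c_l(t)|\le(cl)^l+\frac{t}{2}C(t)^{l-2}l^l$. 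Factoring $(C(t)l)^l$ out and using $(c/C(t))^l\le 2^{-l}$ together with $t/(2C(t)^2)\le 1/4$ closes the induction. Bounding the convolution sum by only a linear factor of $l$ times $(l-2)^{l-2}$ is the crux; any weaker combinatorial estimate would fail to be absorbed into the target $(C(t)l)^l$.

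Once $|c_l(t)|\le(C(t)l)^l$ is in place, Carleman's condition (\ref{carleman}) holds for $(c_l(t))_l$, so any solution of the associated Hamburger moment problem is unique. Existence is automatic: for each $N$ the sequence $(S_{N,l}(t))_l$ is the moment sequence of the probability measure $\mu_{N,t}$, so all its Hankel determinants are nonnegative, and this is preserved under entrywise limits in $N$; the Hamburger solvability criterion then produces a probability measure $\mu_t\in M^1(\mathbb R)$ with moments $c_l(t)$, and it satisfies (\ref{strong-carleman}) with constant $C(t)$. Finally, the uniform bound $\sup_N S_{N,2}(t)<\infty$ yields tightness of $(\mu_{N,t})_N$, every subsequential weak limit must have moments $c_l(t)$, and the uniqueness of $\mu_t$ forces the full sequence to converge weakly to $\mu_t$, which is the desired method-of-moments conclusion.
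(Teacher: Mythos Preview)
Your proof is correct and follows essentially the same approach as the paper: both propagate the bound $|c_l(s)|\le (Rl)^l$ by induction on $l$ via the integral recurrence (\ref{recurrence-herm}) and then invoke the method of moments to conclude. Your version is more explicit---you give the concrete constant $C(t)=\max(2c,\sqrt{2t})$, the convexity bound on the convolution sum, and the Hamburger existence argument via Hankel determinants---whereas the paper simply bounds each product crudely by $R^{l-2}l^{l-2}$, says ``choose $R$ large enough depending on $c,t$,'' and cites the moment convergence theorem.
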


\begin{proof}
 The arguments in the proof of Lemma \ref{recurrence-sl-hermite-det}
show that the limits $c_l(t)$ exist for all $l,t$, and that
 the  $c_l(t)$ satisfy the recurrence (\ref{recurrence-herm}).

Assume now that the $c_l=c_l(0)$ satisfy  (\ref{strong-carleman}), i.e.,
$|c_l|\le (cl)^l$ for all $l$ and some $c>0$. We fix $t>0$ and show that there exists $R=R(t)>1$
such that 
\begin{equation}\label{estimate-c-l-t}
|c_l(s)|\le (Rl)^l \quad\quad \text{ for all}\quad l\ge0, \quad s\in[0,t].
\end{equation}
It is clear from (\ref{c0-3-herm}) that  (\ref{estimate-c-l-t}) holds for $l=0,1,2,3$ and $R$ sufficiently large.
Moreover, for $l\ge4$, we use induction on $l$. In fact, the assumption of our induction, 
 the recurrence (\ref{recurrence-herm}), and  the condition  (\ref{strong-carleman}) imply that for $\tau\in[0,t]$,
\begin{align}\label{est-c-l}
|c_l(\tau)|&\le |c_l(0)|+ \frac{l}{2}\int_0^\tau\Bigl(\sum_{k=0}^{l-2}| c_{l-2-k}(s)|\cdot|c_k(s)|\Bigr)\> ds \notag\\
&\le (cl)^l+ \frac{l(l-1)t}{2}R^{l-2}l^{l-2}.
\end{align}
If we choose $R$ large enough depending on $c,t$, we see that the RHS of (\ref{est-c-l}) is bounded by $R^{l}l^{l}$,
 which then proves (\ref{estimate-c-l-t}). In summary,
 for each $t\ge0$, the sequence $(c_l(t))_l$ satisfies the Carleman condition, and
 this sequence is the limit of the moment sequences of the measures  $\mu_{N,t}\in M^1(\mathbb R)$ for $N\to\infty$.
Hence, by the moment convergence theorem, $(c_l(t))_l$ is the  moment sequences of a unique probability
measure $\mu_t\in M^1(\mathbb R)$, and  the  $\mu_{N,t}$ tend weakly to  $\mu_t$.
\end{proof}

We next identify the limit measures $\mu_t$ in Proposition
 \ref{general-free-convolution-a-firststep} as the free additive convolutions 
\begin{equation}\label{ident-free-a}
\mu_t= \mu_{sc, 2\sqrt{t}}\boxplus \mu  \quad\quad\text{for}\quad t\ge0
\end{equation}
with the free  additive convolution  discussed e.g.~in  \cite{NS, AGZ}.
To prove this we need some additional tools. 
We first recapitulate the Stieltjes transform
\begin{equation}\label{def-Stieltjes}
G_\mu(z):=\int_{\mathbb R} \frac{1}{z-x}\> d\mu(x)  \quad\quad\text{for}\quad z\in H:=\{z\in\mathbb C:\> \Im z>0\}
\end{equation}
of  a probability measure $\mu\in M^1(\mathbb R)$. Clearly, $G_\mu$ is analytic on $H$. We next derive PDEs 
for the  Stieltjes transforms
$$G(t,z):=G_{\mu_t}(z), \quad G^N(t,z):=G_{\mu_{N,t}}(z) \quad\quad (t\ge0, \> z\in H)$$
of the  measures $\mu_t$ and  $\mu_{N,t}$. 
In the setting of 
Proposition \ref{general-free-convolution-a-firststep} we now have:

\begin{proposition}\label{general-free-convolution-a-secondstep}
\begin{enumerate}
\item[\rm{(1)}] For all $N\in\mathbb N$, $t\ge0$, $z\in H$, the partial derivatives of $ G^N$ satisfy
$$G_t^N(t,z)=-G^N(t,z)G^N_z(t,z)  - \frac{1}{N} E^N(t,z)$$
with the error term  $E^N(t,z)$ defined below in (\ref{error-gn}).
\item[\rm{(2)}] Assume that in addition the moment condition  (\ref{strong-carleman}) holds for the start measure
$\mu$. Then for $t\ge0$, $z\in H$, the function $G$ satisfies Burgers equation 
$$G_t(t,z)=-G(t,z)G_z(t,z).$$
\end{enumerate}
\end{proposition}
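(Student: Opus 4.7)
For part (1), I would start from the definition $G^N(t,z)=\frac{1}{N}\sum_{i=1}^N\frac{1}{z-u_i(t)}$ with $u_i(t):=\phi_{N,i}(t)/\sqrt N$. The renormalized ODE (\ref{ODE-A-normalized-intro}) then reads $\dot u_i=\frac{1}{N}\sum_{j\ne i}(u_i-u_j)^{-1}$, so differentiating $G^N$ in $t$ produces the double sum $G^N_t=\frac{1}{N^2}\sum_{i\ne j}(z-u_i)^{-2}(u_i-u_j)^{-1}$. The key combinatorial step is to symmetrize this sum under the swap $i\leftrightarrow j$ and apply the elementary partial-fraction identity
\[\frac{1}{u_i-u_j}\Bigl(\frac{1}{(z-u_i)^2}-\frac{1}{(z-u_j)^2}\Bigr)=\frac{1}{(z-u_i)(z-u_j)^2}+\frac{1}{(z-u_i)^2(z-u_j)},\]
which (after a further symmetry relabeling) collapses the expression to $G^N_t=\frac{1}{N^2}\sum_{i\ne j}(z-u_i)^{-2}(z-u_j)^{-1}$. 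On the other hand, $-G^NG^N_z=\frac{1}{N^2}\sum_{i,j}(z-u_i)^{-1}(z-u_j)^{-2}$; peeling off the diagonal $i=j$ and relabeling identifies the error term as $E^N(t,z)=\frac{1}{N}\sum_{i=1}^N(z-u_i(t))^{-3}$, which enjoys the trivial uniform bound $|E^N(t,z)|\le(\Im z)^{-3}$.

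For part (2), the obvious idea of passing to a pointwise limit of the identity from (1) is awkward because $G^N_t$ involves the singular quantities $\dot u_i$. I would instead integrate in $t$ first and pass to the limit afterwards. Under the moment bound (\ref{strong-carleman}), Proposition \ref{general-free-convolution-a-firststep} guarantees $\mu_{N,t}\to\mu_t$ weakly for every fixed $t\ge0$; since $x\mapsto1/(z-x)$ is bounded and continuous on $\mathbb R$ for $z\in H$, this yields the pointwise convergence $G^N(t,z)\to G(t,z)$ throughout $H$. The uniform bound $|G^N(t,\cdot)|\le(\Im z)^{-1}$ combined with Vitali's/Montel's theorem upgrades this to locally uniform convergence on $H$ together with all $z$-derivatives; in particular $G^N_z(t,z)\to G_z(t,z)$ locally uniformly.

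Integrating the identity from (1) in $t$ and inserting these ingredients gives
\[G^N(t,z)-G^N(0,z)=-\int_0^t G^N(s,z)\,G^N_z(s,z)\,ds-\frac{1}{N}\int_0^t E^N(s,z)\,ds.\]
Each integrand is bounded in modulus by $(\Im z)^{-3}$ uniformly in $N$ and $s\in[0,t]$, and converges pointwise in $s$, so dominated convergence produces the limit identity $G(t,z)-G(0,z)=-\int_0^t G(s,z)\,G_z(s,z)\,ds$. The integrand is continuous in $s$, so differentiating in $t$ yields Burgers equation $G_t=-G\,G_z$.

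The step I expect to be the main obstacle is really the index bookkeeping in part (1); once the double sum is reduced via the symmetrization/partial-fraction identity above, both sides of the desired equation are visibly sums of the same shape and the error term can be read off. The analytic limit argument in (2) is then essentially routine, the crucial point being that the Stieltjes-transform formalism provides the uniform $(\Im z)^{-k}$ bounds needed for both Vitali and dominated convergence.
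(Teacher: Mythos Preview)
Your proof is correct. For part~(1) you take a genuinely different route from the paper: the authors expand $G^N(t,z)=\sum_{l\ge0}S_{N,l}(t)z^{-(l+1)}$ as a Laurent series, differentiate term by term using the moment recursions~(\ref{dS1-hermite})--(\ref{dS3-hermite}) and~(\ref{derivative-recursion-n-a}), and then reassemble the resulting double series into $-G^NG^N_z$ plus a remainder they record as~(\ref{error-gn-1}) and rewrite in closed form as~(\ref{error-gn}). Your approach instead manipulates the finite sum $G^N(t,z)=\frac1N\sum_i(z-u_i)^{-1}$ directly via symmetrization and the partial-fraction identity, which immediately yields the error as $\frac1N\sum_i(z-u_i)^{-3}=\tfrac12 G^N_{zz}(t,z)$. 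This is the same object the paper obtains (their series~(\ref{error-gn-1}) sums to $-\tfrac12 G^N_{zz}$; the sign discrepancy with your formula reflects the sign mismatch between the statement's $-\frac1N E^N$ and the $+\frac1N E^N$ that actually emerges in the paper's derivation). Your route is shorter and avoids the series bookkeeping; the paper's route has the advantage of tying the PDE directly to the moment recursions of Lemma~\ref{recurrence-sl-hermite-det}, which is the organizing theme of Section~2.

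For part~(2) your argument is essentially the same as the paper's: both pass from $\mu_{N,t}\Rightarrow\mu_t$ to locally uniform convergence of $G^N$ and its $z$-derivatives (you invoke Vitali/Montel, the paper invokes Cauchy's integral formula), then integrate in $t$ and apply dominated convergence using the uniform $(\Im z)^{-k}$ bounds.
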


The appearance of Burgers  equation here is not surprising, as this connection is well-known in the context of 
dynamic versions of Gaussian unitary (or symmetric or symplectic) ensembles; see  the next section and e.g. \cite{CG, Men}.

\begin{proof}
 For $t\ge0$ and $z\in H$ with $|z|$ sufficiently large (depending on $N$) we have
$$G^N(t,z)=\int_{\mathbb R} \frac{1}{z-x}\> d\mu_{N,t}(x)  = \sum_{l=0}^\infty \frac{S_{N,l}(t)}{z^{l+1}}$$
and thus
$$G^N_t(t,z)=  \sum_{l=0}^\infty \frac{1}{z^{l+1}}    \frac{d}{dt}S_{N,l}(t).$$
If we apply (\ref{dS1-hermite})-(\ref{dS3-hermite}) as well as the recurrence
 (\ref{derivative-recursion-n-a}), we obtain
\begin{align}
G_t^N(t,z)=& \frac{N-1}{N}\cdot \frac{1}{z^{3}}  +  \frac{3(N-1)}{N}\cdot S_{N,1}(0)\cdot \frac{1}{z^{4}} \notag\\
&+\sum_{l=4}^\infty   \frac{1}{z^{l+1}} \frac{l}{2}\Bigl(\frac{1-l}{N}S_{N,l-2}(t)+
\sum_{k=0}^{l-2} S_{N,l-2-k}(t)S_{N,k}(t)\Bigr)\notag\\
=&\sum_{l=2}^\infty  \frac{1}{z^{l+1}} \frac{l}{2}\sum_{k=0}^{l-2} S_{N,l-2-k}(t)S_{N,k}(t)
\quad+\frac{1}{N} E^N(t,z)\notag
\end{align}
with 
\begin{equation}\label{error-gn-1}
 E^N(t,z):=  - \frac{1}{z^{3}}-  \frac{3}{z^{4}}\cdot S_{N,1}(0)+ 
\frac{1}{2}\sum_{l=4}^\infty   \frac{l(1-l)}{z^{l+1}}\cdot S_{N,l-2}(t).
 \end{equation}
Using
$$z\cdot G_z^N(t,z)=-\sum_{l=0}^\infty    \frac{l+1}{z^{l+1}}\cdot S_{N,l}(t) , \quad
z^2\cdot G_{zz}^N(t,z)=\sum_{l=0}^\infty  \frac{(l+1)(l+2)}{z^{l+1}}\cdot S_{N,l}(t),$$
we obtain by some elementary calculation that
\begin{align}\label{error-gn}
 E^N(t,z)= &- \frac{1}{2}z^2\cdot G_{zz}^N(t,z)-2z\cdot G_z^N(t,z)-G^N(t,z)\\
&-  \frac{1}{z^3}\Bigl(1-S_{N,2}(0)-\frac{N-1}{N}t\Bigr) - 
\frac{3}{z^4}\Bigl(S_{N,1}(0)-  \frac{3(N-1)}{N}t-S_{N,3}(0)\Bigr).\notag
 \end{align}
As
\begin{align}
\frac{1}{2z}&\sum_{l=2}^\infty  \frac{1}{z^{l}}l\sum_{k=0}^{l-2} S_{N,l-2-k}(t)S_{N,k}(t)
= \frac{1}{2z}\sum_{l=0}^\infty\sum_{k=0}^{l} \frac{1}{z^{l+2}} (l+2) S_{N,l-k}(t)S_{N,k}(t)\notag\\
=&\frac{1}{2z^2}\sum_{l=0}^\infty\sum_{k=0}^{l} \left[(l+1-k)S_{N,l-k}(t) z^{-l-1+k} S_{N,k}(t) z^{-k}\right.\notag\\
&\quad\quad\quad\quad\quad\quad \left.+(k+1)S_{N,l-k}(t) z^{-l+k}S_{N,k}(t) z^{-k-1}\right]\notag\\
=&-G^N(t,z)G^N_z(t,z),\notag
\end{align}
part (1) follows for $z\in H$ sufficiently large. 
As both sides of the equation in (1) are analytic in $z\in H$, this equation
 holds for all  $z\in H$.

For (2) we recapitulate that the measures  $\mu_{N,t}$ tend weakly to  $\mu_t$ by Proposition 
\ref{general-free-convolution-a-firststep}. This implies that the Stieltjes transforms
$G^N(t,z)$ tend to $G(t,z)$ for  $t\ge0$ and locally uniformly for $z\in H$.
Hence, by the integral formulas of Cauchy, also $G^N_z(t,z)$ tends to $G_z(t,z)$ and  
 $G^N_{zz}(t,z)$ to $G_{zz}(t,z)$ for $z\in H$. Therefore, the error term $E^N(t,z)$ converges for $N\to\infty$
by Lemma \ref{recurrence-sl-hermite-det}. This and part (1) imply that the derivatives
$G_t^N(t,z)$ tend to $-G(t,z)G_z(t,z)$ for $N\to\infty$. Moreover, as
$$|G^N_t(t,z)|=\Bigl|\frac{d}{dt}\int_{\mathbb R} \frac{1}{z-x}\> d\mu_{N,t}(x)\Bigr|
\le \frac{1}{N}\sum_{k=1}^N \frac{1}{|z-\phi_{N,k}^\prime(t)/\sqrt N|}\le \frac{1}{\Im z},$$
we conclude by dominated convergence that for $t\ge0$
$$\lim_{N\to\infty}(G^N(t,z) - G^N(0,z))=-\int_0^t G(\tau,z)G_z(\tau,z)\> d\tau.$$
This implies (2).
\end{proof}

Proposition \ref{general-free-convolution-a-secondstep}(2)
now leads to (\ref{ident-free-a}) with the aid of the R-transform of measures
 $\mu\in M^1(\mathbb R)$ which is defined e.g.~in Section 5.3 of \cite{AGZ} 
as the formal power series
$R_\mu(z):= \sum_{n=0}^\infty k_{n+1}(\mu)z^n$ with the free cumulants $k_{n}(\mu)$ of the measure
$\mu$ for which all moments exist. As formal power and Laurent series and also as analytic functions on suitable domains in
 the upper halfplane (see Section 5.3.3 of \cite{AGZ}), the functions $R_\mu$ and $G_\mu$ are related  by 
\begin{equation}\label{relation-g-r-1}
z-\frac{1}{G_\mu(z)}=R_\mu(G_\mu(z)).
\end{equation}
If we apply this to the measures $\mu_t$ and the  R-transform $R(t,z):=R_{\mu_t}(z)$, we get
\begin{equation}\label{relation-g-r-2}
z=R(t,G(t,z)) +\frac{1}{G(t,z)}.
\end{equation}
Hence, on suitable domains,
\begin{eqnarray}
G_z(t,z)&=& -\frac{G^2(t,z)}{1-R_z(t, G(t,z))G^2(t,z)}\label{G-R-1}\\
G_t(t,z)&=& \frac{R_t(t,G(t,z))G^2(t,z)}{1-R_z(t, G(t,z))G^2(t,z)}.\label{G-R-2}
\end{eqnarray}
Proposition \ref{general-free-convolution-a-secondstep}(2) now implies that
$R_t(t,G(t,z))= G(t,z)$. As $z\mapsto G(t,z)$ is not constant, we arrive at
\begin{equation}\label{pde-r-a}
R_t(t,z)= z
\end{equation}
with $R(0,z)=R_\mu(z)$ the R-transform of the starting measure $\mu$.
Therefore, 
$$ R(t,z)= zt +R(0,z).$$
As by  5.3.23 and 5.3.26 of \cite{AGZ}, the R-transform satisfies
$$R_{\mu_{sc, 2\sqrt{t}}\boxplus \mu}(z)=R_{\mu_{sc, 2\sqrt{t}}}(z)+R_\mu(z)= zt +R(0,z),$$
and as the R-transform is injective, we finally obtain (\ref{ident-free-a}).

In summary, we have proved the following theorem  mentioned in the introduction

\begin{theorem}\label{general-free-convolution-a}
Let $\mu\in M^1(\mathbb R)$ be a probability measure satisfying (\ref{strong-carleman}), and let
 $(x_{N,n})_{N\ge1, 1\le n\le N}\subset\mathbb R$ 
with $x_{N, n-1}\ge x_{N, n}$ for  $2\le n\le N$ such that the empirical measures
\begin{equation}\label{starting-empirical-a}
\mu_{N,0}:= \frac{1}{N}(\delta_{x_{N, 1}/\sqrt N}+\ldots \delta_{x_{N, N}/\sqrt N})
\end{equation}
tend weakly to  $\mu$ for $N\to\infty$. If we form the associated solutions
$(\phi_{N,1}(t),\ldots,\phi_{N,N}(t))$  of (\ref{basic-ode-a}) and 
the associated  normalized
 empirical measures
$$\mu_{N,t}:= \frac{1}{N}(\delta_{\phi_{N,1}(t)/\sqrt N}+\ldots +\delta_{\phi_{N,N}(t)/\sqrt N}) \quad\quad(t\ge0),$$
then  for
$t\in[0,\infty[$, 
the  $\mu_{N,t}$ tend weakly to  $\mu_{sc, 2\sqrt{t}}\boxplus \mu$.
\end{theorem}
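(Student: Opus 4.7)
The plan is to assemble the theorem directly from the machinery already developed in this section. Proposition \ref{general-free-convolution-a-firststep} shows that under hypothesis (\ref{strong-carleman}) the sequence $\mu_{N,t}$ converges weakly to a unique probability measure $\mu_t$ characterized by its moments $c_l(t) = \lim_N S_{N,l}(t)$, which inherit the estimate (\ref{estimate-c-l-t}) and hence satisfy (\ref{strong-carleman}) uniformly in $t$ on compacts. Proposition \ref{general-free-convolution-a-secondstep}(2) then shows that the Stieltjes transform $G(t,z) = G_{\mu_t}(z)$ solves the complex Burgers equation $G_t(t,z) + G(t,z)\,G_z(t,z) = 0$ on $H$, with initial datum $G(0,z) = G_\mu(z)$. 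So everything reduces to recognizing the solution of this PDE as $G_{\mu_{sc,2\sqrt t} \boxplus \mu}$.

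To do so, I would translate Burgers equation into a PDE for the R-transform $R(t,z) = R_{\mu_t}(z)$ via the defining identity $z = R(t, G(t,z)) + 1/G(t,z)$. Differentiating in $z$ and in $t$ yields the formulas (\ref{G-R-1}) and (\ref{G-R-2}) for $G_z$ and $G_t$ in terms of $R_t, R_z$, and substituting into $G_t = -G\,G_z$ gives $R_t(t, G(t,z)) = G(t,z)$. Since $z \mapsto G(t,z)$ is a non-constant analytic function on an appropriate domain of $H$, it attains a set of values with an accumulation point, and the identity theorem forces $R_t(t,w) = w$ for $w$ in that domain. Integrating in $t$ yields $R(t,z) = tz + R_\mu(z)$.

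It then suffices to recognize $tz$ as $R_{\mu_{sc,2\sqrt t}}(z)$ (a standard computation recorded in \cite{AGZ}, Section 5.3) and invoke the additivity $R_{\nu_1 \boxplus \nu_2} = R_{\nu_1} + R_{\nu_2}$ to conclude $R(t,z) = R_{\mu_{sc,2\sqrt t} \boxplus \mu}(z)$. Injectivity of the R-transform on its domain of definition finally gives $\mu_t = \mu_{sc,2\sqrt t} \boxplus \mu$, as required.

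The main technical obstacle is bookkeeping of the domains on which each of these objects is defined and analytic, and ensuring that the formal manipulations of the R-transform are legitimate as identities of analytic functions rather than merely of formal power series. The uniform bound (\ref{estimate-c-l-t}) on the moments of $\mu_t$ is precisely what guarantees that $G(t,\cdot)$ admits an analytic inverse near infinity, so that $R(t,\cdot)$ is genuinely analytic on a nonempty open set, justifying both the differentiation step that produces $R_t(t,z) = z$ and the final injectivity argument. Once this is in place, the remaining steps are essentially algebraic and require no further estimates beyond those already established in Propositions \ref{general-free-convolution-a-firststep} and \ref{general-free-convolution-a-secondstep}.
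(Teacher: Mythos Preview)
Your proposal is correct and follows essentially the same route as the paper: invoke Proposition \ref{general-free-convolution-a-firststep} for the existence of the limit $\mu_t$, Proposition \ref{general-free-convolution-a-secondstep}(2) for the Burgers equation satisfied by $G(t,z)$, then pass to the R-transform via (\ref{relation-g-r-2})--(\ref{G-R-2}) to obtain $R_t(t,z)=z$, integrate, and identify the result using the known R-transform of the semicircle law and additivity under $\boxplus$. Your added remarks on the analyticity of $R(t,\cdot)$ and the justification of the injectivity step are a welcome clarification of points the paper treats more tersely.
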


\begin{remark}
We show in the next section that the limit measures $\mu_t$ ($t>0$)
in Proposition \ref{general-free-convolution-a-firststep} also appear in a 
 dynamic version Wigner's semicircle law
for Gaussian unitary ensembles; c.f.~\cite{AGZ}. If one uses this together with
the results of Section 3, one obtains a further proof of
 (\ref{ident-free-a}).
\end{remark}

We finally study an ODE with an additional drift compared to (\ref{basic-ode-a}).
For this let $\phi_N(t,x_0)$ be a  solution of (\ref{basic-ode-a}) with start in $x_0\in C_N^A$. Then, by
 an easy computation (see \cite{VW}), 
\begin{equation}\label{renorming-stationary}
\tilde \phi_N(t,x_0):=       \phi_N(\frac{1-e^{-2\lambda t}}{2\lambda}, e^{-\lambda t}x_0)
\end{equation}
is a solution of the ODE
\begin{equation}\label{renorming-stationary-dgl}
\frac{dx}{dt}(t) =H(x(t))-\lambda x(t), \quad \phi_N(0,x_0)=x_0 
\end{equation}
 for $\lambda\in \mathbb{R}$ and vice versa. 
As the functions $\tilde \phi_N$ and  $\phi_N$ are related by the
 space-time transformation (\ref{renorming-stationary}),
  we obtain  the following semicircle limit law.

\begin{corollary}\label{ODE-stationaer}
Let  $x:=(x_{N,k})_{k\geq 1}$ be  starting sequences as in Lemma \ref{recurrence-sl-hermite-det} and $\lambda>0$.
Consider the solutions  $\tilde{\phi}_N(t):=\tilde{\phi}_N(t,(x_{N,1},\ldots,x_{N,1}))$ 
of (\ref{renorming-stationary-dgl}) and the associated normalized empirical measures $\tilde{\mu}_{N,t}$. Then
$$\lim_{t\to\infty}\lim_{N\to\infty}\tilde{\mu}_{N,t}= \lim_{N\to\infty}\lim_{t\to\infty}\tilde{\mu}_{N,t}=\mu_{sc,\sqrt{2/\lambda}}.$$
\end{corollary}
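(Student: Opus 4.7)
The plan is to exploit the exact space-time transformation (\ref{renorming-stationary}). Writing $x_0:=(x_{N,1},\ldots,x_{N,N})$ and $s(t):=(1-e^{-2\lambda t})/(2\lambda)$, that identity reads $\tilde\phi_N(t)=\phi_N(s(t),e^{-\lambda t}x_0)$, so the rescaled empirical measure $\tilde\mu_{N,t}$ is nothing but the (rescaled) empirical measure at time $s(t)$ of the undrifted flow started from the compressed configuration $e^{-\lambda t}x_0$. Since $\frac{1}{N}\sum_k\delta_{e^{-\lambda t}x_{N,k}/\sqrt N}$ tends weakly, as $N\to\infty$, to the pushforward $\mu^{(t)}$ of $\mu$ under $y\mapsto e^{-\lambda t}y$, Theorem \ref{general-free-convolution-a} applied with this scaled starting data yields
\[
\lim_{N\to\infty}\tilde\mu_{N,t}\;=\;\mu_{sc,\,2\sqrt{s(t)}}\boxplus\mu^{(t)}.
\]

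For the first iterated limit I would then pass $t\to\infty$ in the display above. Here $2\sqrt{s(t)}\to\sqrt{2/\lambda}$, and because $\mu$ has compact support, $\mu^{(t)}\to\delta_0$ weakly. By continuity of $(R,\rho)\mapsto\mu_{sc,R}\boxplus\rho$ with respect to weak convergence of compactly supported probability measures, together with the fact that $\delta_0$ is the identity for $\boxplus$, the right hand side tends to $\mu_{sc,\sqrt{2/\lambda}}$.

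For the reverse order, fix $N$ and let $t\to\infty$ first. Now $s(t)\to 1/(2\lambda)$ and $e^{-\lambda t}x_0\to 0\in C_N^A$, so continuity of the flow $\phi_N(\cdot,\cdot)$ at the time $1/(2\lambda)>0$ and initial datum $0$ (available by the well-posedness recalled in ODE \ref{ODE-Hermite} and the explicit solution of Corollary \ref{special-solution}) gives $\tilde\phi_N(t)\to\phi_N(1/(2\lambda),0)=z_N/\sqrt\lambda$, where $z_N=(z_{N,1},\ldots,z_{N,N})$ is the ordered vector of zeros of $H_N$. Passing to empirical measures, $\lim_{t\to\infty}\tilde\mu_{N,t}=\frac{1}{N}\sum_k\delta_{z_{N,k}/\sqrt{\lambda N}}$, and Corollary \ref{classical-limit-hermite-ns} followed by the pushforward under $y\mapsto y/\sqrt\lambda$ yields the same limit $\mu_{sc,\sqrt{2/\lambda}}$ as $N\to\infty$.

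The only genuine obstacles are the two continuity statements invoked above: weak-convergence continuity of $(R,\rho)\mapsto\mu_{sc,R}\boxplus\rho$ on compactly supported inputs, which is standard free-probability input (e.g.\ via the Stieltjes / $R$-transform machinery already used in the previous proof), and continuity of $\phi_N(s,\cdot)$ at the boundary point $0$ of the chamber for fixed $s>0$, which is made explicit by the identity $\phi_N(s,0)=\sqrt{2s}\,z_N$ of Corollary \ref{special-solution}. Beyond these two points, the argument is direct substitution into Theorem \ref{general-free-convolution-a} respectively Corollary \ref{classical-limit-hermite-ns}.
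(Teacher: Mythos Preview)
Your treatment of the order $\lim_{N\to\infty}\lim_{t\to\infty}$ is essentially identical to the paper's: both use the space--time transform to reach $\phi_N(1/(2\lambda),0)=z_N/\sqrt{\lambda}$ and then apply Corollary~\ref{classical-limit-hermite-ns}.

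For the order $\lim_{t\to\infty}\lim_{N\to\infty}$ you take a genuinely different route. The paper stays entirely at the moment level: since by Lemma~\ref{recurrence-sl-hermite-det} each limit moment $c_l$ is a \emph{polynomial} in the time variable and in the initial moments, one has $\lim_{N\to\infty}\tilde S_{N,l}(t)=c_l\bigl(s(t),e^{-\lambda t}x\bigr)$, and letting $t\to\infty$ in this polynomial expression yields $c_l(1/(2\lambda),0)$, which Example~\ref{semicircle} identifies as the $l$-th moment of $\mu_{sc,\sqrt{2/\lambda}}$. You instead first upgrade to the measure level via Theorem~\ref{general-free-convolution-a}, obtaining $\lim_{N\to\infty}\tilde\mu_{N,t}=\mu_{sc,2\sqrt{s(t)}}\boxplus\mu^{(t)}$, and then let $t\to\infty$ using weak continuity of~$\boxplus$. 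Your argument is conceptually cleaner and yields the bonus of identifying the inner $N$-limit for every fixed $t$ as an explicit free convolution. The trade-off is a hypothesis mismatch: the corollary assumes only the setting of Lemma~\ref{recurrence-sl-hermite-det}, i.e.\ that the moment limits $c_l(0)$ exist, whereas Theorem~\ref{general-free-convolution-a} requires an actual limiting measure $\mu$ satisfying~(\ref{strong-carleman}), and your compact-support claim for $\mu$ is stronger still. The paper's polynomial-moment argument needs no such $\mu$; it only uses that the \emph{final} iterated moment limits are those of a compactly supported (hence determinate) semicircle law.
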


\begin{proof} Corollary \ref{special-solution} and (\ref{renorming-stationary}) yield
 $\lim_{t\to\infty}\tilde{\phi}_N(t)=\phi_N(1/(2\lambda),0)=\sqrt{1/\lambda}\cdot z$. Hence, by Example \ref{semicircle} and
 Corollary \ref{classical-limit-hermite-ns},
$\lim_{N\to\infty}\lim_{t\to\infty}\tilde{\mu}_{N,t}=\mu_{sc,\sqrt{2/\lambda}}$.

On the other hand, if we use the moments  $\tilde{S}_{N,l}$ ($l\ge0$) of the empirical measures $\tilde{\mu}_{N,t}$, 
we see from the space-time transformation (\ref{renorming-stationary}) and
 Lemma \ref{recurrence-sl-hermite-det}  that
\begin{eqnarray*}
\lim_{N\to\infty} \tilde{S}_{N,l}&=&\lim_{N\to\infty}\frac{1}{N^{l/2+1}}\sum_{i=1}^N\tilde{\phi}_{N,i}(t)^l\\
&=&\lim_{N\to\infty}\frac{1}{N^{l/2+1}}\sum_{i=1}^N\phi_{N,i}(\frac{1-e^{-2\lambda t}}{2\lambda}, e^{-\lambda t}(x_{N,1},\ldots,x_{N,1}))^l\\
&=& c_l(\frac{1-e^{-2\lambda t}}{2\lambda}, e^{-\lambda t}x),
\end{eqnarray*}
where $c_l(t,x)$ is a polynomial in $t$ and $x$ by the proof of  Lemma \ref{recurrence-sl-hermite-det}.
Hence
$$\lim_{t\to\infty}\lim_{N\to\infty} \tilde{S}_{N,l}=c_l(\frac{1}{2\lambda},0).$$
If we use the recurrence relations for the $ c_l$  in
 the proof of  Lemma \ref{recurrence-sl-hermite-det} together with Example \ref{semicircle}, we obtain
$\lim_{t\to\infty}\lim_{N\to\infty}\tilde{\mu}_{N,t}=\mu_{sc,\sqrt{2/\lambda}}.$
\end{proof}

\section{The Semicircle law for Bessel processes of type A}

Now we consider Bessel processes $( X_{t,k})_{t\ge0}$ on the Weyl chambers $C_N^A$ for the root systems $A_{N-1}$
which satisfy the SDE 
\begin{equation}\label{SDE-A}
 dX_{t,k}^i = dB_t^i+ k\sum_{j\ne i} \frac{1}{X_{t,k}^i-X_{t,k}^j}dt \quad\quad(i=1,\ldots,N).
\end{equation}
with  an $N$-dimensional Brownian motion $(B_t^1,\ldots,B_t^N)_{t\ge0}$.
By \cite{GM} (see also \cite{Sch} for a related situation)
we know that for $k\geq 1/2$ and all starting points $x\in C_N^A$, (\ref{SDE-A}) admits 
an a.s.~solution $( X_{t,k})_{t\ge0}$  which does not hit the boundary of $C_N^A$ 
for $t>0$ almost surely, even if $x$ is on  the boundary of $C_N^A$.
 In the following we only consider this regular case $k\geq 1/2$.

Under convergence conditions on the starting points as in Section 2 for $N\ge2$, we
now  derive limit theorems for the moments of the associated empirical measures
\begin{equation}\label{unscaled-empirical-A}
\mu_{N,t}:= \frac{1}{N}(\delta_{X_{t,k}^1/\sqrt N}+\ldots +\delta_{X_{t,k}^N/\sqrt N})
\end{equation}
for $t\ge0$ and $N\to\infty$. For this, it will be convenient 
 also to study the  renormalized processes $(\tilde X_{t,k}:=X_{t,k}/\sqrt k)_{t\ge0}$ which satisfy the SDE
\begin{equation}\label{SDE-A-normalized}
d\tilde X_{t,k}^i =\frac{1}{\sqrt k}dB_t^i + \sum_{j\ne i} 
 \frac{1}{\tilde X_{t,k}^i-\tilde X_{t,k}^j}dt\quad\quad(i=1,\ldots,N),
\end{equation}
which agrees, for $k=\infty$, with the ODE (\ref{basic-ode-a}). We also study the  empirical measures
\begin{equation}\label{SDE-empirical-a}
\tilde\mu_{N,t}:= \frac{1}{N}(\delta_{\tilde X_{t,k}^1/\sqrt N}+\ldots +\delta_{\tilde X_{t,k}^N/\sqrt N}).
\end{equation}
Denote the $l$-th moment $(l\in\mathbb N_0$) of  $\tilde\mu_{N,t}$ by
$$S_{N,l}(t):= \int_{\mathbb R} y^l \> d\tilde\mu_{N,t}(y) = \frac{1}{N^{l/2+1}}\sum_{i=1}^N (\tilde X_{t,k}^i)^l.$$
We will show that for all $l$ the moments $S_{N,l}(t)$  converge for $N\to\infty$ to the numbers
$c_l(t)$ of Lemma \ref{recurrence-sl-hermite-det} independent of $k\in[1/2,\infty]$.
The proof of this fact will be based on some induction  which even leads to a slightly more general convergence statement.

    To state this result, we need some notation about partitions. Let $\cal P$ the set of all partitions with $N$ components consisting of all
    $\lambda=(\lambda_1,\ldots,\lambda_N)\in\mathbb N_0^N$ with $\lambda_1\ge \lambda_2\ge\ldots\geq\lambda_N$. For $\lambda\in \cal P$,
    let $|\lambda|=\lambda_1+\ldots+\lambda_N$ its weight and $L(\lambda):=\max\{k:\> \lambda_k\ge1\}$ its length. We also consider the
    symmetric monomials
    $$m_\lambda(x):= \sum_{\pi\in S_N} x^{\pi(\lambda)}:= \sum_{\pi\in S_N} x_1^{\lambda_{\pi(1)}}x_2^{\lambda_{\pi(2)}}\cdots x_N^{\lambda_{\pi(N)}}$$
    for $x=(x_1,\ldots,x_N)\in\mathbb R^N$ where the sum runs over the symmetric group $S_N$ which acts on vectors in the obvious way.

\begin{lemma}\label{general-limit-expectations-a}
Let  $(x_N)_{N\ge1}:=(x_{N,n})_{N\ge1, 1\le n\le N}\subset\mathbb R$  be a family of starting numbers with $x_{N,n-1}\ge x_{N,n}$
for $2\le n\le N$, 
for which $$\lim_{N\to\infty} \frac{m_\lambda(x_N)}{N!\cdot N^{|\lambda|/2}}<\infty$$
exists for all  $\lambda\in \cal P$.
%for which the convergence condition in Lemma \ref{recurrence-sl-hermite-det} holds.
Let $k\in[1/2,\infty]$, and consider for $N\ge2$ the renormalized Bessel processes  $(\tilde X_{t,k})_{t\geq 0}$ 
with  start in $(x_{N,1}, \ldots,x_{N,n})\in C_N^A$. Then, for all $\lambda\in \cal P$, the limits
$$\lim_{N\to\infty} \frac{E(m_\lambda(\tilde X_{t,k}))}{N!\cdot N^{|\lambda|/2}}$$
exist locally uniformly in $t$ and are independent from $k$.
\end{lemma}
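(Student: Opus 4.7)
The plan is to proceed by strong induction on $|\lambda|$, converting $m_\lambda(\tilde X_{t,k})$ into an integral equation via It\^o's formula (for finite $k$; the fundamental theorem of calculus handles $k=\infty$). Since $k\geq 1/2$ guarantees that the process avoids the boundary for $t>0$ and $m_\lambda$ is a polynomial, It\^o applies, and standard moment bounds for Bessel processes ensure the martingale term has mean zero. Writing the generator of $\tilde X_{t,k}$ as
$$\tilde{\mathcal L}_k = \frac{1}{2k}\Delta + D,\qquad D := \sum_{i}\sum_{j\neq i}\frac{1}{x_i-x_j}\partial_i,$$
one obtains after taking expectations
$$E[m_\lambda(\tilde X_{t,k})] = m_\lambda(x_N) + \int_0^t E[\tilde{\mathcal L}_k m_\lambda(\tilde X_{s,k})]\,ds.$$

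The key algebraic fact is that $\tilde{\mathcal L}_k m_\lambda$ is again a symmetric polynomial, of total degree $|\lambda|-2$. For the Laplacian this is immediate; for the drift it follows from the symmetrization
$$D m_\lambda = \tfrac{1}{2}\sum_{i\neq j}\frac{\partial_i m_\lambda - \partial_j m_\lambda}{x_i-x_j},$$
in which the difference is divisible by $x_i-x_j$ after pairing monomials of $m_\lambda$ under the transposition $(i,j)$. Expanding in the $m_\mu$ basis yields
$$\tilde{\mathcal L}_k m_\lambda = \sum_{\mu:\,|\mu|\leq|\lambda|-2}\Bigl(a^D_{\lambda,\mu}(N) + \tfrac{1}{k}a^\Delta_{\lambda,\mu}(N)\Bigr) m_\mu,$$
with coefficients $a^D_{\lambda,\mu}(N)$ and $a^\Delta_{\lambda,\mu}(N)$ independent of $k$. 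Dividing the integral equation by $N!\,N^{|\lambda|/2}$ rewrites the left-hand side as the normalized expectation; the initial value converges by the hypothesis on $x_N$, and each term under the integral becomes the coefficient $\frac{a^D_{\lambda,\mu}(N)+k^{-1}a^\Delta_{\lambda,\mu}(N)}{N^{(|\lambda|-|\mu|)/2}}$ times the normalized expectation of $m_\mu$, which converges locally uniformly in $s$ by the induction hypothesis. Because the recursion depends only on strictly lower-weight moments, no Gr\"onwall argument is required.

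The main obstacle is the combinatorial bookkeeping of these coefficients. One must verify that $a^D_{\lambda,\mu}(N)/N^{(|\lambda|-|\mu|)/2}$ tends to a finite limit while $k^{-1}a^\Delta_{\lambda,\mu}(N)/N^{(|\lambda|-|\mu|)/2}\to 0$ for every $k\in[1/2,\infty]$. The mechanism is that the outer double sum in $D$ produces roughly $N^2$ terms, of which after telescoping the difference quotients the correct combinatorial count matches the scaling $N^{(|\lambda|-|\mu|)/2}$ in the leading case $|\mu|=|\lambda|-2$ and is of strictly lower order for smaller $|\mu|$; by contrast, $\Delta m_\lambda$ carries no comparable $N$-factor, so the Laplacian piece is dominated by the denominator. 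This simultaneously establishes convergence and $k$-independence of the limit, closing the induction from the trivial base $|\lambda|=0$; the locally uniform convergence in $t$ is inherited through the integral representation from the corresponding locally uniform bound at the previous inductive level.
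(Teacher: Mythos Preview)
Your proposal is correct and follows essentially the same route as the paper: induction on $|\lambda|$, It\^o's formula, the symmetrization identity $Dm_\lambda=\tfrac12\sum_{i\neq j}(\partial_i m_\lambda-\partial_j m_\lambda)/(x_i-x_j)$ to recognize the drift as a symmetric homogeneous polynomial of degree $|\lambda|-2$, and the observation that the Laplacian contribution is $O(1/N)$ after normalization while the drift coefficients divided by $N$ converge. The paper supplies somewhat more detail in two places you skim over: it verifies the martingale property of the stochastic integral explicitly via the L\'evy characterization and a bound of $\sum_i(\partial_i m_\lambda)^2$ by polynomials in a one-dimensional Brownian motion and a one-dimensional squared Bessel process (which is what $\sum_i(\tilde X^i_{s,k})^2$ turns out to be), and it writes out the pairing over $\pi\in S_N$ and the transposition $(i,j)$ concretely to justify that the drift coefficients $c_{\tilde\lambda}/N$ converge; but these are exactly the computations your sketch points toward.
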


\begin{proof} We prove this statement by induction on  $|\lambda|$.

For $| \lambda|=0$ we have $\lambda=0$ and $m_\lambda(x)=N!$ which yields the claim here.
For $| \lambda|=1$ we have $\lambda=(1,0,\ldots,0)$ and $m_\lambda(x)=(N-1)!\cdot(x_1+\ldots+ x_N)$.
Thus, as
$$\sum_{i=1}^N\tilde X_{t,k}^i = \sum_{i=1}^N x_{N,i} + \frac{1}{\sqrt k}\sum_{i=1}^N B_t^i$$
by (\ref{SDE-A-normalized}), the claim also follows.

Now let  $\lambda\in \cal P$ with  $| \lambda|\ge 2$, and
we assume that the statement is already shown for partitions with weight at most $| \lambda|-1$.
 It\^{o}'s formula and (\ref{SDE-A-normalized})
yield
\begin{eqnarray}\label{SDE-Moments-A-allg}
  m_\lambda(\tilde X_{t,k})&=&
  m_\lambda(x_{N})+\frac{1}{\sqrt{k}}\sum_{i=1}^N\int_0^t \frac{ dm_\lambda}{dx_i}(\tilde X_{s,k})\>  dB_s^i
  \\
  && +\int_0^t\left(\sum_{i=1}^N \frac{1}{2k}\frac{ d^2m_\lambda}{dx_i^2}(\tilde X_{s,k})
+\sum_{i=1}^N \sum_{j\neq i} \frac{ dm_\lambda}{dx_i}(\tilde X_{s,k}) \cdot\frac{1}{\tilde X_{s,k}^i-\tilde X_{s,k}^j}
\right) ds.\nonumber
\end{eqnarray}

We now claim that the diffusion parts $\frac{1}{\sqrt{k}}\sum_{i=1}^N\int_0^t \frac{ dm_\lambda}{dx_i}(\tilde X_{s,k})\>  dB_s^i$
of (\ref{SDE-Moments-A-allg}) are martingales and thus
\begin{equation}\label{centered-parts}
E\left(\sum_{i=1}^N\int_0^t\frac{ dm_\lambda}{dx_i}(\tilde X_{s,k}) \> dB_s^i\right)=0 \quad\quad ( t\ge0).
\end{equation}
 To prove this we observe that
 \begin{equation}\label{brownian-rep}
   \sum_{i=1}^N\int_0^t \frac{ dm_\lambda}{dx_i}(\tilde X_{s,k}) dB_s^i\stackrel{d}{=}
   \int_0^t\sqrt{\sum_{i=1}^N(\frac{ dm_\lambda}{dx_i}(\tilde X_{s,k}))^{2}}\, d\tilde{B}_s
 \end{equation}
for some one-dimensional Brownian motion $\tilde{B}$ by the L\'{e}vy characterization of
the  one-dimensional Brownian motion. We now analyze the integrand of the RHS of (\ref{brownian-rep}).
As $\sum_{i=1}^N(\frac{ dm_\lambda}{dx_i})^{2}$ is a symmetric polynomial,
we may write it as a polynomial expression with rational coefficients of polynomials of the form $p_n(x)=\sum_{i=1}^Nx_i^n$.
Furthermore, for $l\geq 1$, we have
$$\sum_{i=1}^N x_i^{2l}\leq\Bigl(\sum_{i=1}^N x_i^2\Bigr)^l, \quad
\sum_{i=1}^N |x_i|^{2l+1}\leq \Bigl(\sum_{i=1}^N x_i^2\Bigr)^{(2l+1)/2}.$$
As
$\sum_{i=1}^N \tilde{X}_{s,k}^i= \sum_{i=1}^N B_s^i, $ and  as
$(\sum_{i=1}^N (\tilde{X}_{s,k}^i)^2)_{s\ge0}$ is a classical squared Bessel process by
$$\sum_{i=1}^N (\tilde X_{t,k}^i)^2 = \sum_{i=1}^N x_{N,i}^2 + \frac{2}{\sqrt k}\sum_{i=1}^N \int_0^t\tilde X_{s,k}^i dB_s^i
+ (N(N-1)+\frac{N}{k})t,$$
we see that we may bound $\sum_{i=1}^N(\frac{ dm_\lambda}{dx_i}(\tilde X_{s,k}))^{2}$ by
a polynomial in a one-dimensional Brownian motion and a one-dimensional squared Bessel process.
With standard results on  the It\^{o} integral, this
readily yields the claim and  (\ref{centered-parts}).

We now turn to the second term in the RHS of (\ref{SDE-Moments-A-allg}). We here use that
$$\sum_{i=1}^N\frac{ d^2m_\lambda}{dx_i^2}(x)=\sum_{i=1}^N\sum_{\pi\in S_N} \lambda_{\pi(i)}(\lambda_{\pi(i)}-1)x^{\pi(\lambda)-2e_i}$$
with the $i$-th unit vector $e_i=(0,\ldots,0,1,0,\ldots,0)\in\mathbb R^N$. Notice that this expression is a
symmetric polynomial which is homogeneous of order $|\lambda|-2$ and thus a linear combination of the symmetric monomials
$m_{\tilde \lambda}$ with $\tilde \lambda\in \cal P$ with $|\tilde \lambda|=|\lambda|-2$. Moreover, the coefficients in this 
linear combination depend only on $\lambda_1,\ldots,\lambda_{L(\lambda)}$ and not on $N\ge L(\lambda)$. Hence, by our induction assumption,
\begin{equation}\label{independence-reason}
\frac{1}{N!\cdot N^{|\lambda|/2}} E\left( \int_0^t\sum_{i=1}^N \frac{1}{2k}\frac{ d^2m_\lambda}{dx_i^2}(\tilde X_{s,k}) \> ds\right)   =O(1/N).
\end{equation}

We now turn to the 3rd term in the RHS of (\ref{SDE-Moments-A-allg}). We here first notice that for $u,v\in\mathbb R$, $a,b\in\mathbb N_0$,
$(u^av^b-u^bv^a)/(u-v)$ is some polynomial $p_{a,b}(u,v)$ of degree $a+b-1$.
With our notations and the transposition $(i,j)\in S_N$ we here have
\begin{align}\label{fracction-term}
\sum_{i=1}^N \sum_{j\neq i} &\frac{ dm_\lambda}{dx_i}(x) \cdot\frac{1}{x_i-x_j}
=\sum_{\pi\in S_N}\sum_{i=1}^N\sum_{j\neq i}  \lambda_{\pi(i)} \frac{ x^{\pi(\lambda)-e_i}}{x_{i}-x_{j}}\notag\\
&=\frac{1}{2}\sum_{\pi\in S_N} \sum_{i=1}^N \sum_{j\neq i}\frac{\lambda_{\pi(i)}x^{\pi(\lambda)-e_i}- \lambda_{\pi(j)}x^{\pi(\lambda)-e_j}}{x_{i}-x_{j}}\notag\\
&=\frac{1}{2}\sum_{\pi\in S_N}\sum_{i=1}^N \sum_{j\neq i} \frac{\lambda_{\pi(i)}x^{\pi(\lambda)-e_i}- \lambda_{\pi(i)}x^{\pi\circ(i,j)(\lambda)-e_j}}{x_{i}-x_{j}}\notag\\
&=\frac{1}{2}\sum_{\pi\in S_N}\sum_{i=1}^N \sum_{j\neq i} \lambda_{\pi(i)} p_{\lambda_{\pi(i)}-1,\lambda_{\pi(j)}-1}(x_i,x_j)\cdot  (x^{\pi(\lambda)})_{i,j}
\end{align}
where $(x^{\pi(\lambda)})_{i,j}$ means the product as in the definition of the $m_\lambda$ above where factors involving $x_i,x_j$ are cancelled,
as these parts are handled separately in the polynomial $p_{\lambda_{\pi(i)}-1,\lambda_{\pi(j)}-1}(x_i,x_j)$.
Please notice that the third $=$ in (\ref{fracction-term}) follows by an obvious
substitution in the summation over $\pi\in S_N$ in the second summand.

The expression (\ref{fracction-term}) obviously   is a symmetric polynomial which is homogeneous of
order $|\lambda|-2$ and thus a linear combination of the symmetric monomials
$m_{\tilde \lambda}$ with $\tilde \lambda\in \cal P$ with $|\tilde \lambda|=|\lambda|-2$. Moreover, if we analyze the coefficients $c_{\tilde \lambda}$
of the
$m_{\tilde \lambda}$
of this linear combination (w.l.o.g. take for simplicity the product $x^{\tilde \lambda}$),
we see that $c_{\tilde \lambda}/N$ converges for $N\to\infty$. Hence, by our induction assumption,
\begin{equation}\label{third-term-con}
  \frac{1}{N!\cdot N^{|\lambda|/2}} E\left( \int_0^t\sum_{i=1}^N \sum_{j\neq i}
  \frac{ dm_\lambda}{dx_i}(\tilde X_{s,k}) \cdot\frac{1}{\tilde X_{s,k}^i-\tilde X_{s,k}^j}
\right) ds.
\end{equation}
converges for $N\to\infty$. A combination of (\ref{SDE-Moments-A-allg}),
(\ref{independence-reason}),  (\ref{fracction-term}), and (\ref{third-term-con})
now completes our induction.
\end{proof}

\begin{remark} The proof of Lemma \ref{general-limit-expectations-a} shows that for fixed $k$ and $\lambda$, the limit in 
Lemma \ref{general-limit-expectations-a} has order $O(1/N)$.
  \end{remark}

Lemma \ref{general-limit-expectations-a} has the following application to the moments $S_{N,l}(t)$:
    
\begin{corollary}\label{convergence-in-expectation-a}
    Let  $(x_{N,n})_{N\ge1, 1\le n\le N}\subset\mathbb R$  be starting numbers with $x_{N,n-1}\ge x_{N,n}$
for $2\le n\le N$,   for which  the convergence condition in Lemma \ref{recurrence-sl-hermite-det} holds. 
Let $k\in[1/2,\infty]$, and let for $N\ge2$,  $(\tilde X_{t,k})_{t\geq 0}$ 
 the renormalized Bessel processes 
 starting in $(x_{N,1}, \ldots,x_{N,n})\in C_N^A$. Then, for $l\in\mathbb N_0$ and $c_l(t)$ from Lemma  \ref{recurrence-sl-hermite-det},
 $$E(S_{N,l}(t))\to c_l(t) \quad\quad\text{for}\quad N\to\infty$$
 \end{corollary}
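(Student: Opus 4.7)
The plan is to deduce the corollary by specializing Lemma \ref{general-limit-expectations-a} to the partition $\lambda=(l,0,\ldots,0)$. For this choice, $m_\lambda(x)=(N-1)!\,p_l(x)$ with $p_l(x)=\sum_{i=1}^N x_i^l$, so
$$\frac{m_\lambda(x)}{N!\cdot N^{l/2}}=\frac{p_l(x)}{N\cdot N^{l/2}}=S_{N,l}(x).$$
Hence $E(S_{N,l}(t))=E(m_\lambda(\tilde X_{t,k}))/(N!\cdot N^{l/2})$, and convergence of this quantity for $N\to\infty$ together with $k$-independence of the limit is exactly what Lemma \ref{general-limit-expectations-a} would provide.

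First I would verify that the hypothesis of Lemma \ref{recurrence-sl-hermite-det} on the starting data---existence of $c_l(0)=\lim_N S_{N,l}(x_N)$ for every $l\in\mathbb N_0$---implies the (a priori stronger) hypothesis of Lemma \ref{general-limit-expectations-a}, namely existence of $\lim_N m_\lambda(x_N)/(N!\cdot N^{|\lambda|/2})$ for every partition $\lambda$. This reduces to expressing $m_\lambda$ as a polynomial in the power sums $p_n$. Writing, for $\lambda$ with distinct non-zero parts $\lambda_1,\ldots,\lambda_L$,
$$m_\lambda(x)=(N-L)!\sum_{\substack{i_1,\ldots,i_L\\ \text{distinct}}}x_{i_1}^{\lambda_1}\cdots x_{i_L}^{\lambda_L},$$
an elementary inclusion--exclusion expands the inner sum as a polynomial in $p_{\lambda_j}$ and $p_{\lambda_j+\lambda_{j'}}$, etc.  After dividing by $N!\cdot N^{|\lambda|/2}$, the combinatorial prefactors $(N-L)!/N!$ cancel exactly the powers of $N$ needed so that the leading term tends to $\prod_j c_{\lambda_j}(0)$ while all other terms are of lower order in $N$. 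The case of repeated parts is handled analogously up to a symmetry factor $\prod_j n_j!$.

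Once the hypothesis transfer is in place, Lemma \ref{general-limit-expectations-a} applies, so $L_l(t):=\lim_{N\to\infty}E(S_{N,l}(t))$ exists locally uniformly in $t$ and is independent of $k\in[1/2,\infty]$. To identify this limit with $c_l(t)$, I would specialize to $k=\infty$: the SDE \eqref{SDE-A-normalized} then degenerates to the ODE \eqref{basic-ode-a}, so $\tilde X_{t,\infty}^i=\phi_{N,i}(t)$ is deterministic and $E(S_{N,l}(t))=S_{N,l}(t)$, which converges to $c_l(t)$ by Lemma \ref{recurrence-sl-hermite-det}. By $k$-independence, $L_l(t)=c_l(t)$ for every $k\in[1/2,\infty]$, which is the desired statement.

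The only non-trivial step is the hypothesis transfer, but the required expansion of $m_\lambda$ into power sums is routine symmetric-function combinatorics; the heavy analytic work (control of the It\^o diffusion term, the second-derivative sum, and the singular drift sum) has already been done inside Lemma \ref{general-limit-expectations-a}. The corollary is therefore essentially a dictionary translation between the "moment" language of Section 2 and the "symmetric monomial" language needed to run the induction in Lemma \ref{general-limit-expectations-a}.
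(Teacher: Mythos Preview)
Your proposal is correct and follows essentially the same route as the paper: verify that the power-sum convergence condition of Lemma~\ref{recurrence-sl-hermite-det} implies the monomial-symmetric convergence condition of Lemma~\ref{general-limit-expectations-a} via the standard expansion of $m_\lambda$ in power sums, then specialize Lemma~\ref{general-limit-expectations-a} to $\lambda=(l,0,\ldots,0)$ and identify the $k$-independent limit by taking $k=\infty$ and invoking Lemma~\ref{recurrence-sl-hermite-det}. Your treatment of the hypothesis transfer is in fact more explicit than the paper's, which simply asserts that the $m_\lambda$ are polynomials in the $p_l$ with coefficients independent of $N$ for $N\ge L(\lambda)$.
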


\begin{proof} As the polynomials $m_\lambda$ in Lemma \ref{general-limit-expectations-a} are polynomials in the polynomials
  $\sum_{i=1}^N x_i^l$ for $l\le|\lambda|$ with coefficient independent of $N$ for $N\ge L(\lambda)$,
 the convergence condition in Lemma \ref{recurrence-sl-hermite-det} implies that the convergence 
condition in Lemma \ref{general-limit-expectations-a} holds.
As $S_{N,l}(t)=\frac{1}{N!N^{l/2}} m_{(l,0,\ldots,0)}(\tilde X_{t,k})$, Lemma \ref{general-limit-expectations-a} implies that the
  expectations converge, and that the limit is independent from $k\in[1/2,\infty]$. 
Lemma \ref{recurrence-sl-hermite-det} now completes the proof.
\end{proof}

 Corollary \ref{convergence-in-expectation-a} can be extended to an a.s. result:

\begin{theorem}\label{semicircle-A}
Let  $(x_{N,n})_{N\ge1, 1\le n\le N}\subset\mathbb R$  be starting numbers with $x_{N,n-1}\ge x_{N,n}$
for $2\le n\le N$,   for which the  condition in Lemma \ref{recurrence-sl-hermite-det}   holds.
Fix $k\geq 1/2$, and, for $N\ge2$, the renormalized Bessel processes  $(\tilde X_{t,k})_{t\geq 0}$ 
  starting in $(x_{N,1}, \ldots,x_{N,n})\in C_N^A$. 
Then, 
\begin{equation}\label{limit-A}
S_{N,l}(t)\to c_l(t) \quad\quad\text{for}\quad N\to\infty
\end{equation}
almost surely for all  $l\ge0$ and locally uniformly for  $t\in[0,\infty[$.
\end{theorem}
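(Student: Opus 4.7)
The plan is to proceed by induction on $l$, using an It\^o decomposition of the moment processes $S_{N,l}(\cdot)$ and combining the computations behind Lemma~\ref{recurrence-sl-hermite-det} (for the drift) with a Burkholder--Davis--Gundy / Borel--Cantelli argument (for the martingale part). Applying It\^o to $(\tilde X_{t,k}^i)^l$ using (\ref{SDE-A-normalized}) and summing, I would write
\begin{equation*}
S_{N,l}(t)=S_{N,l}(0)+M_N^l(t)+A_N^l(t),
\end{equation*}
where the martingale part is
\begin{equation*}
M_N^l(t)=\frac{l}{\sqrt{k}\,N^{l/2+1}}\sum_{i=1}^N\int_0^t (\tilde X_{s,k}^i)^{l-1}\,dB_s^i,
\end{equation*}
and $A_N^l(t)$ is the finite-variation drift containing both the repulsive interaction sum and the It\^o correction $\frac{l(l-1)}{2k N^{l/2+1}}\sum_i(\tilde X_{s,k}^i)^{l-2}$. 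The base cases $l=0,1$ are immediate: $S_{N,0}(t)\equiv 1$ and $S_{N,1}(t)=S_{N,1}(0)+(\sqrt{k}\,N^{3/2})^{-1}\sum_i B_t^i$ converges a.s.\ locally uniformly to $c_1(0)=c_1(t)$.

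For the induction step, the symmetrisation trick already used in (\ref{derivative-recursion-n-a}) lets me rewrite the interaction drift exactly as
\begin{equation*}
\frac{l}{2}\Bigl(\frac{1-l}{N}S_{N,l-2}(s)+\sum_{k=0}^{l-2}S_{N,l-2-k}(s)S_{N,k}(s)\Bigr),
\end{equation*}
and the It\^o correction contributes an additional $\frac{l(l-1)}{2k}\cdot\frac{S_{N,l-2}(s)}{N}$; both ``extra'' terms are of order $O(1/N)$ when multiplied by the inductively bounded $S_{N,l-2}$. By the inductive hypothesis, $S_{N,l'}(s)\to c_{l'}(s)$ a.s.\ locally uniformly for all $l'\le l-1$, so in particular these processes are locally uniformly bounded in $(s,N)$ on the almost-sure event. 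Dominated convergence then yields
\begin{equation*}
A_N^l(t)\xrightarrow[N\to\infty]{}\frac{l}{2}\int_0^t\sum_{k=0}^{l-2}c_{l-2-k}(s)c_k(s)\,ds
\end{equation*}
a.s.\ and locally uniformly in $t$, which matches $c_l(t)-c_l(0)$ by the recurrence (\ref{recurrence-herm}).

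For the martingale part, the quadratic variation is
\begin{equation*}
\langle M_N^l\rangle_t=\frac{l^2}{k N^{l+2}}\sum_{i=1}^N\int_0^t(\tilde X_{s,k}^i)^{2l-2}\,ds=\frac{l^2}{kN^2}\int_0^t S_{N,2l-2}(s)\,ds.
\end{equation*}
Corollary~\ref{convergence-in-expectation-a} applied to the index $2l-2$ gives $E[S_{N,2l-2}(s)]\to c_{2l-2}(s)$ locally uniformly, so $E\langle M_N^l\rangle_t\le C(t)/N^2$. The Burkholder--Davis--Gundy inequality then yields $E[\sup_{s\le t}|M_N^l(s)|^2]\le C'(t)/N^2$, and by Markov plus Borel--Cantelli, $\sup_{s\le t}|M_N^l(s)|\to 0$ almost surely; exhausting $[0,\infty[$ by a countable sequence $t_m\uparrow\infty$ upgrades this to locally uniform convergence a.s. Combining with the drift analysis completes the induction. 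The main obstacle I foresee is the coupling between induction levels via the martingale's quadratic variation, which involves $S_{N,2l-2}$ (index larger than $l-1$); this is why I need the $L^1$ bound from Corollary~\ref{convergence-in-expectation-a} separately from the full a.s.\ inductive hypothesis, so that the BDG estimate goes through without a circular dependence.
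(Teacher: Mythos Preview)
Your proposal is correct and follows essentially the same route as the paper: an It\^o decomposition of $S_{N,l}$, the symmetrisation identity (\ref{derivative-recursion-n-a}) for the drift, the quadratic-variation bound $\langle M_N^l\rangle_t=\frac{l^2}{kN^2}\int_0^t S_{N,2l-2}(s)\,ds$ controlled in expectation via Corollary~\ref{convergence-in-expectation-a}, then BDG plus Borel--Cantelli to kill the martingale, and finally induction on $l$ for the drift. Your explicit flagging of the ``circular dependence'' issue (that the quadratic variation involves the moment of order $2l-2>l-1$) and its resolution through the separate $L^1$ estimate from Corollary~\ref{convergence-in-expectation-a} is exactly the mechanism the paper uses, though the paper leaves this point implicit.
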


\begin{proof}
We first recapitulate from Corollary \ref{convergence-in-expectation-a} that for all $l$,
  \begin{equation}\label{limit-expectations}
 E\left(\frac{1}{N^{l/2+1}}\sum_{i=1}^N(\tilde X_{t,k}^i)^l\right)=c_l(t)+o(1) \quad(N\to\infty)
\end{equation}
  locally uniformly in $t\ge0$. Moreover, by It\^{o}'s formula and (\ref{SDE-A-normalized}),
\begin{eqnarray}\label{SDE-Moments-A}
\sum_{i=1}^N(\tilde X_{t,k}^i)^l&=&\sum_{i=1}^N x_{N,i}^l + \frac{l}{\sqrt k} \sum_{i=1}^N\int_0^t(\tilde X_{s,k}^i)^{l-1} dB_s^i\\
&& +\int_0^t\left(l \sum_{i=1}^N \sum_{j\neq i}
\frac{(\tilde X_{s,k}^i)^{l-1}}{\tilde X_{s,k}^i-\tilde X_{s,k}^j}+
\sum_{i=1}^N \frac{l(l-1)}{2k}(\tilde X_{s,k}^i)^{l-2}\right) ds.\nonumber
\end{eqnarray}
This together with the Chebychev and Burkholder-Davis-Gundy inequality shows that
 for all $l\in \mathbb N_0$, $\epsilon>0$ and $T>0$ and some universal constant $c>0$,
\begin{align}
\sup_{s\leq T}P\Bigl(&\Bigl|\frac{1}{N^{l/2+1}}\Bigl(\sum_{i=1}^N\Bigl((\tilde{X}_{s,k}^i)^l-\sum_{i=1}^N x_{N,i}^l \notag\\
&  -\int_0^t\left(l \sum_{i=1}^N \sum_{j\neq i}
\frac{(\tilde X_{s,k}^i)^{l-1}}{\tilde X_{s,k}^i-\tilde X_{s,k}^j}+
\sum_{i=1}^N \frac{l(l-1)}{2k}(\tilde X_{s,k}^i)^{l-2}\right) ds \Bigr) \Bigr|>\epsilon\Bigr)\notag\\
&\leq
 \frac{1}{\epsilon^2} E\Bigl(\sup_{s\leq T}\Bigl(\frac{1}{N^{l/2+1}} \frac{l}{\sqrt{k}}\sum_{i=1}^N\int_0^t(\tilde{X}_{s,k}^i)^{l-1} dB_s^i
\Bigr)^2\Bigr)\notag\\
&\leq \frac{c}{\epsilon^2}E\Bigl(\frac{l^2}{k}\frac{1}{N^{l+2}}\int_0^T \sum_{i=1}^N(\tilde{X}_{s,k}^i)^{2l-2} ds\Bigr)\to 0
\notag\end{align}
as $N\to\infty$. Furthermore, by (\ref{limit-expectations}),
$$\sum_{N=1}^\infty E\Bigl(\frac{1}{N^2} \frac{l^2}{k}\int_0^T \frac{1}{N^l} \sum_{i=1}^N(X_{s,k}^i)^{2l-2} ds\Bigr)<\infty.$$
This and the Borel-Cantelli Lemma imply that 
\begin{align}
  \frac{1}{N^{l/2+1}}&\Bigl(\sum_{i=1}^N(\tilde X_{s,k}^i)^l-\sum_{i=1}^N x_{N,i}^l \notag\\
  &-\int_0^t\Bigl(l \sum_{i=1}^N \sum_{j\neq i}
\frac{(\tilde X_{s,k}^i)^{l-1}}{\tilde X_{s,k}^i-\tilde X_{s,k}^j}+
\sum_{i=1}^N \frac{l(l-1)}{2k}(\tilde X_{s,k}^i)^{l-2}\Bigr) ds \Bigr)\to 0
\notag\end{align}
for $N\to\infty$  almost surely.
Hence, by (\ref{SDE-Moments-A}), also
\begin{equation}\label{as-convergence-brownian-part}
  \frac{1}{N^{l/2+1}} \left(\sum_{i=1}^N\frac{l}{\sqrt{k}}\int_0^t(\tilde{X}_{s,k}^i)^{l-1} dB_s^i\right)\to 0 \quad\text{a.s..}
  \end{equation}
With this result we now prove the claim by induction on $l$. In fact, the case $l=0$ is trivial,
and the case $l=1$ follows easily from (\ref{SDE-Moments-A}) and (\ref{as-convergence-brownian-part}) for $l=1$,
and the fact that the drift part on the RHS of (\ref{SDE-Moments-A}) disappears. Moreover, for $l\ge2$, we again use 
(\ref{SDE-Moments-A}) and (\ref{as-convergence-brownian-part}), i.e., it suffices to show that
for $N\to\infty$
\begin{align}\frac{1}{N^{l/2+1}}&\left(\sum_{i=1}^N x_{N,i}^l  +\int_0^t\left(l \sum_{i=1}^N \sum_{j\neq i}
\frac{(\tilde X_{s,k}^i)^{l-1}}{\tilde X_{s,k}^i-\tilde X_{s,k}^j}+
\sum_{i=1}^N \frac{l(l-1)}{2k}(\tilde X_{s,k}^i)^{l-2}\right) ds \right)\notag\\
&=c_l(t)+o_p(\frac{1}{N^{l/2+1}})\notag
\end{align}
almost surely. But this follows easily from our induction assumption and the recurrence of the numbers $c_l(t)$ in Lemma 
\ref{recurrence-sl-hermite-det}. This yields the claim.
\end{proof}

Theorem \ref{semicircle-A}
implies that for $N\to\infty$ the moments of the empirical measures of the renormalized
 Bessel processes $(\tilde{X}_{t,k})_{t\geq 0}$ behave almost surely like the solutions of the ODEs (\ref{ODE-Hermite}),
 i.e. the case $k=\infty$. 
Hence Theorem \ref{general-free-convolution-a}
 also holds for the empirical measure associated to the renormalized Bessel processes $(\tilde{X}_{t,k})_{t\geq 0}$.
In particular, we obtain the following result from the moment convergence theorem.

\begin{theorem}\label{limit-theorem-a-final}
Let $\mu\in M^1(\mathbb R)$ be a probability measure satisfying (\ref{strong-carleman}), and let
 $(x_{N,n})_{N\ge1, 1\le n\le N}\subset\mathbb R$ 
with $x_{N, n-1}\ge x_{N, n}$ for  $2\le n\le N$ such that the associated normalized empirical measures
as in (\ref{starting-empirical-a})
tend weakly to  $\mu$ for $N\to\infty$.

 For $k\geq 1/2$ and $N\in\mathbb N$, consider the renormalized 
Bessel processes $(\tilde X_{t,k})_{t\geq 0}$  with  start in
$(x_{N,1}, \ldots,x_{N,n})\in C_N^A$.
Then, for  $t\ge0$, the empirical measures $\tilde\mu_{N,t}$ from (\ref{SDE-empirical-a}) tend weakly to 
 $\mu_{sc, 2\sqrt{t}}\boxplus \mu$ almost surely for $N\to\infty$.
\end{theorem}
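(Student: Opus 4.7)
The plan is to combine the almost sure moment convergence established in Theorem \ref{semicircle-A} with the identification of the limit measure already carried out in Theorem \ref{general-free-convolution-a}, and then invoke the moment convergence theorem pathwise.

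First, I would observe that the hypotheses of the theorem immediately supply the hypotheses of Theorem \ref{semicircle-A}. Indeed, the weak convergence of the starting empirical measures $\mu_{N,0}$ to $\mu$, together with the moment bound (\ref{strong-carleman}) on $\mu$, implies the convergence condition of Lemma \ref{recurrence-sl-hermite-det}: for every $l\in\mathbb{N}_0$, $S_{N,l}(0)\to c_l := \int x^l\,d\mu(x)$. Hence Theorem \ref{semicircle-A} applies and yields, on a single event of full probability (obtained as the countable intersection over $l\in\mathbb{N}_0$ of the null sets coming from each $l$), the a.s.~convergence $S_{N,l}(t)\to c_l(t)$ for every $l\in\mathbb{N}_0$, locally uniformly in $t\in[0,\infty[$, where $c_l(t)$ are the deterministic moments produced by Lemma \ref{recurrence-sl-hermite-det}.

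Next, I would fix $t\ge 0$ and identify the sequence $(c_l(t))_{l\ge 0}$ as the moment sequence of the target measure. By Proposition \ref{general-free-convolution-a-firststep}, the condition (\ref{strong-carleman}) on $\mu$ propagates to the limits $c_l(t)$ in the form (\ref{estimate-c-l-t}), so $(c_l(t))_{l\ge 0}$ satisfies Carleman's condition and thus is the moment sequence of a unique probability measure on $\mathbb{R}$. By Theorem \ref{general-free-convolution-a}, this measure equals $\mu_{sc,2\sqrt t}\boxplus\mu$.

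To conclude, on the full-probability event above, each random sequence of measures $(\tilde\mu_{N,t})_{N\ge 2}$ has all of its moments converging to the moments of the determinate probability measure $\mu_{sc,2\sqrt t}\boxplus\mu$. The classical moment convergence theorem (in the form used in Proposition \ref{general-free-convolution-a-firststep}) then gives $\tilde\mu_{N,t}\to\mu_{sc,2\sqrt t}\boxplus\mu$ weakly, almost surely, for every fixed $t\ge 0$. The only mildly subtle point is the bookkeeping of null sets: since the conclusion is stated for each fixed $t$, and since moment convergence at a fixed $t$ only requires the countably many events $\{S_{N,l}(t)\to c_l(t)\}$ for $l\in\mathbb{N}_0$, no uncountable intersection is needed and no further measurability argument is required. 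This completes the plan.
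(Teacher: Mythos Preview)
Your proposal is correct and follows essentially the same route as the paper: the paper's own argument (given in the paragraph immediately preceding the theorem) is precisely to invoke Theorem~\ref{semicircle-A} for the almost sure moment convergence, then Theorem~\ref{general-free-convolution-a} (together with Proposition~\ref{general-free-convolution-a-firststep}) to identify the moment limits as those of $\mu_{sc,2\sqrt t}\boxplus\mu$, and finally the moment convergence theorem. Your version is somewhat more carefully written---in particular your remark on handling the countable family of null sets over $l\in\mathbb N_0$ is a useful clarification that the paper leaves implicit.
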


\begin{remark}\label{remark-freezing}
The previous results were obtained for fixed $k>1/2$. As in \cite{AV1, AV2, V, VW}
 we may also consider the freezing regime $k\to\infty$. Here we  choose starting points
$(x_{1,N},\ldots, x_{N,N})$ in the interior of  $ C_N^A$ for $N\in\mathbb N$.
Then by \cite{AV1}, 
$$ X_{t,k}/\sqrt{k}\to \phi_{N}(t)$$
locally uniform in $t$ a.s. for $k\to\infty$ and the Bessel processes   $( X_{t,k}^N)_{t\geq 0}$
starting in $(x_{1,N},\ldots, x_{N,N})$
 where the $\phi_N$ are the solutions of (\ref{basic-ode-a}) with start in $(x_{1,N},\ldots, x_{N,N})$.
 Hence, by (\ref{recurrence-sl-hermite-det}),
$$\lim_{N\to\infty}\lim_{k\to\infty}\frac{1}{N^{l/2+1}}\sum_{i=1}^N(\frac{X_{t,k}^i}{\sqrt{k}})^l=c_l(t)$$
a.s.. On the other hand, by Theorem \ref{semicircle-A},
$$\lim_{k\to\infty}\lim_{N\to\infty}\frac{1}{N^{l/2+1}}\sum_{i=1}^N(\frac{X_{t,k}^i}{\sqrt{k}})^l=c_l(t)$$
a.s., i.e., the limits of $k$ and $N$ may be interchanged here. 
\end{remark}

We next study a stochastic analogue of  Corollary \ref{ODE-stationaer} and
 modify the SDE of the Bessel processes  by an additional drift 
 $-\lambda x$ with some constant $\lambda\in\mathbb R$, i.e.
 a component as in a classical Ornstein-Uhlenbeck setting, cf. \cite{VW}. We thus consider 
Bessel-OU processes $Y:=(Y_{t,k})_{t\ge0}$ on $C_N^A$ of type $A_{N-1}$ 
as solutions of
\begin{equation}\label{SDE-A-mr}
 dY_{t,k}^i = dB_t^i+\left( k\sum_{j\ne i} \frac{1}{Y_{t,k}^i-Y_{t,k}^j}-\lambda  Y_{t,k}^i\right) dt \quad\quad(i=1,\ldots,N)
\end{equation}
for $k\ge1/2$
with an $N$-dimensional Brownian motion $(B_t^1,\ldots,B_t^N)_{t\ge0}$. 
For $\lambda>0$, these processes are mean  reverting ergodic process $Y=( Y_{t,k})_{t\ge0}$
 with speed of mean-reversion $\lambda$, and for  $\lambda\leq 0$,  non-ergodic.

 It\^{o}'s formula together with a time-change argument shows
 that $Y$ is a space-time transformation of the original Bessel process  $( X_{t,k})_{t\geq 0}$ (with $\lambda=0$) via 
$$Y_{t,k}=e^{-\lambda t}X_{\frac{e^{2\lambda t}-1}{2\lambda},k}.$$
 For a proof based on the generators of these diffusions see \cite{RV1}.
 Using this space-time transformation, we can immediately reformulate  Theorems \ref{semicircle-A} and
\ref{limit-theorem-a-final}
 for the  $Y_{t,k}$. Moreover, for $t\to\infty$ we obtain the following 
 analogue to Corollary \ref{ODE-stationaer}.

\begin{corollary}
Consider starting sequences
$(x_{1,N},\ldots, x_{N,N})\subset C_N^A$ for $N\in\mathbb N$ 
 as in Lemma \ref{recurrence-sl-hermite-det}, and let $\lambda>0$ and $k\geq 1/2$. For $N\in\mathbb N$ let
 $( Y_{t,k}^N)_{t\ge0}$ be the solution of (\ref{SDE-A-mr}) on $C_N^A$ with start in $(x_{1,N},\ldots, x_{N,N})$.
Then  the  empirical measures
 $$\tilde{\mu}_{N,t}:= \frac{1}{N}(\delta_{Y_{t,k}^1/\sqrt N}+\ldots +\delta_{Y_{t,k}^N/\sqrt N})$$ 
satisfy
$\lim_{N\to\infty}\lim_{t\to\infty}\tilde{\mu}_{N,t}=\lim_{t\to\infty}\lim_{N\to\infty}\tilde{\mu}_{N,t}= \mu_{sc,\sqrt{2/\lambda}}.$
\end{corollary}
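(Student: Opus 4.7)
The argument mirrors that of Corollary \ref{ODE-stationaer}, with the space-time transformation $Y_{t,k}\stackrel{d}{=} e^{-\lambda t}X_{s(t),k}$ (where $s(t):=(e^{2\lambda t}-1)/(2\lambda)$) recorded just above the corollary playing the role of the ODE identity (\ref{renorming-stationary}). Working with the $\sqrt k$-renormalized processes $\tilde Y = Y/\sqrt k$ and $\tilde X = X/\sqrt k$ as throughout Section 3, one checks that the $l$-th moment of $\tilde\mu_{N,t}$ equals $e^{-\lambda l t}\,S_{N,l}(s(t))$, with $S_{N,l}$ the moment from Theorem \ref{semicircle-A} for the pure Bessel process. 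The corollary therefore reduces to computing the two iterated limits of this single expression and verifying that they agree.

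For the order $\lim_{t\to\infty}\lim_{N\to\infty}$, fix $t$: Theorem \ref{semicircle-A} gives $S_{N,l}(s(t))\to c_l(s(t))$ a.s.\ as $N\to\infty$, so the $l$-th moment of $\tilde\mu_{N,t}$ tends a.s.\ to $e^{-\lambda l t}\,c_l(s(t))$. By Lemma \ref{recurrence-sl-hermite-det} together with Lemma \ref{leading-Catalan}, $c_l(\cdot)$ is a polynomial of degree at most $\lfloor l/2\rfloor$ whose leading coefficient for $l=2n$ is the Catalan number $C_n$. Since $s(t)\sim e^{2\lambda t}/(2\lambda)$ as $t\to\infty$, the exponential factor exactly cancels the leading monomial: the limit is $C_n(2\lambda)^{-n}$ when $l=2n$ and $0$ for odd $l$. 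By Example \ref{semicircle} these are precisely the moments of $\mu_{sc,\sqrt{2/\lambda}}$, whose compact support ensures moment-problem determinacy, and the moment convergence theorem closes this direction.

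For the order $\lim_{N\to\infty}\lim_{t\to\infty}$, fix $N$. Since $\lambda>0$, the Bessel-OU process is ergodic with stationary distribution on $C_N^A$ of density proportional to $\prod_{i<j}(y_i-y_j)^{2k}\exp(-\lambda\|y\|^2)$, i.e.\ a rescaled Hermite $\beta$-ensemble with $\beta=2k$. Ergodicity combined with the uniform-in-$t$ moment bounds obtained from the It\^o/BDG machinery used in the proof of Theorem \ref{semicircle-A} yields a.s.\ convergence of the empirical moments to the stationary moments as $t\to\infty$; the classical Wigner semicircle law for Hermite $\beta$-ensembles (see \cite{DE1}, \cite{AGZ}) then produces again $C_n(2\lambda)^{-n}$ for $l=2n$ and $0$ for odd $l$, matching the other iterated limit. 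The main obstacle is precisely this inner $t\to\infty$ limit for fixed $N$, which is not directly covered by Theorem \ref{semicircle-A} (whose convergence is only locally uniform in $t$); a self-contained alternative is to derive from It\^o's formula the closed linear first-order ODE for $E[\tilde S^Y_{N,l}(t)]$, in which the additional drift $-\lambda Y^i$ contributes a damping term $-\lambda l\cdot E[\tilde S^Y_{N,l}(t)]$, solve it explicitly, and let $t\to\infty$ followed by $N\to\infty$ using the recurrence (\ref{recurrence-herm}) at the stationary solution.
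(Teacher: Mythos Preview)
Your treatment of the order $\lim_{t\to\infty}\lim_{N\to\infty}$ is correct and clean: writing the $l$-th moment of $\tilde\mu_{N,t}$ as $e^{-\lambda l t}S_{N,l}(s(t))$, applying Theorem \ref{semicircle-A} at the fixed time $s(t)$, and then matching the leading Catalan coefficient of $c_l$ against the exponential factor reproduces the moments of $\mu_{sc,\sqrt{2/\lambda}}$.

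The gap is in the other order, $\lim_{N\to\infty}\lim_{t\to\infty}$. Ergodicity of the Bessel--OU process on $C_N^A$ gives convergence of the \emph{law} of $Y_{t,k}$ to the stationary $\beta$-Hermite ensemble, and convergence of \emph{time averages}; it does not give almost-sure convergence of $S^Y_{N,l}(t)$ as $t\to\infty$ for a fixed trajectory, so your sentence ``Ergodicity \ldots\ yields a.s.\ convergence of the empirical moments'' is not justified. Your fallback via the linear ODE for $E[\tilde S^Y_{N,l}(t)]$ only controls expectations, which is a weaker statement than what you claim.

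The fix is to exploit the same scaling that reconciles the two forms of the space--time transformation. Since the Bessel process satisfies $X_{s,k}^{x_0}\stackrel{d}{=}cX_{s/c^2,k}^{x_0/c}$, one has
\[
Y_{t,k}^{x_0}=e^{-\lambda t}X_{(e^{2\lambda t}-1)/(2\lambda),k}^{x_0}\ \stackrel{d}{=}\ X_{(1-e^{-2\lambda t})/(2\lambda),k}^{\,e^{-\lambda t}x_0},
\]
which is the exact stochastic analogue of (\ref{renorming-stationary}). In this representation both the time $(1-e^{-2\lambda t})/(2\lambda)$ and the starting point $e^{-\lambda t}x_0$ converge to finite limits $1/(2\lambda)$ and $0$, so continuity of the transition kernels (\ref{density-general}) in $(t,x)$ gives $Y_{t,k}\stackrel{d}{\to}X_{1/(2\lambda),k}^{0}$ as $t\to\infty$ for each fixed $N$. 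The inner limit $\lim_{t\to\infty}\tilde\mu_{N,t}$ is therefore the (random) empirical measure of $\tilde X_{1/(2\lambda),k}^{0}/\sqrt N$, and the outer limit $N\to\infty$ follows directly from Theorem \ref{limit-theorem-a-final} with $\mu=\delta_0$, yielding $\mu_{sc,2\sqrt{1/(2\lambda)}}=\mu_{sc,\sqrt{2/\lambda}}$. This is precisely how the paper's proof of Corollary \ref{ODE-stationaer} handled the deterministic case, and it avoids both the ergodicity misstep and the detour through stationary $\beta$-ensembles.
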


\section{Zeroes of Laguerre polynomials and the Marchenko-Pastur law}

In this section we transfer the approach of Section 2 for the empirical measures of
 the zeroes of the Hermite polynomials to Laguerre polynomials.
We start with the appropriate ODE:

\begin{ODE}\label{ODE-Laguerre}
Let $\nu\in]0,\infty[$ and $N\ge 2$. On the interior of the closed Weyl chamber
$$C_N^B:=\{x\in \mathbb R^N: \quad x_1\ge x_2\ge\ldots\ge x_N\ge0\}\subset \mathbb R^N$$
of type B, we consider the  $\mathbb R^N$-valued function
$$H_\nu(x):=
\Bigl( \sum_{j\ne1} \frac{2x_1}{x_1^2-x_j^2}+  \frac{\nu}{x_1},
\ldots,\sum_{j\ne N} \frac{2x_N}{x_N^2-x_j^2}+  \frac{\nu}{x_N} \Bigr).$$
It is shown in \cite{VW2}
 that for each initial condition $x_0\in C_N^B$, the ODE
\begin{equation}\label{basic-ode-b}
\frac{dx}{dt}(t) =H_\nu(x(t)), \quad\quad x(0)=x_0
\end{equation}
has a unique solution for all $t\ge0$ in the sense that $[0,\infty[\to  C_N^B$, $t\mapsto x(t)$
is continuous where $x(t)$ is in the interior of  $C_N^B$ and solves the ODE in  (\ref{basic-ode-b}) for $t>0$.
For the ODE  (\ref{basic-ode-b})  see also  \cite{AV1, VW}. 
We  denote  solutions of the ODE (\ref{basic-ode-a})  by
 $\phi_N:=(\phi_{N,1},\ldots,\phi_{N,N})$  where we suppress the dependence on $x_0$ and $\nu$.
\end{ODE}

For  $x_0=0\in C_N^b$, the solution of (\ref{basic-ode-b})
  can be expressed in terms of the 
zeroes of the  Laguerre polynomial $L_N^{(\nu-1)}$   where the $(L_N^{(\nu-1)})_{N\ge 0}$ are orthogonal w.r.t.
 the density  $e^{-x}x^{\nu-1}$ on $]0,\infty[$ as in \cite{S}. In fact, by
\cite{AV1} we have:

\begin{lemma}\label{special-solution-B}
Let $\nu>0$ and  $z_1^{(\nu-1)}>\ldots>z_N^{(\nu-1)}>0$ be the ordered zeros of $L_N^{(\nu-1)}$.
Then, for $z:=(z_1^{(\nu-1)},\ldots,z_N^{(\nu-1)}) \in C_N^B$ and any
  $c\ge 0$,
$$\phi_N(t):= \sqrt{2t+c^2}\cdot (\sqrt{z_1^{(\nu-1)}},\ldots,  \sqrt{z_N^{(\nu-1)}}) \quad\quad (t\ge0)$$
is a solution of the ODE (\ref{basic-ode-b}).
\end{lemma}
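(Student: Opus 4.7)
The plan is to verify the claim by direct substitution. Setting $\phi_{N,i}(t) := \sqrt{2t+c^2}\cdot\sqrt{z_i^{(\nu-1)}}$, I would first compute
$$\frac{d\phi_{N,i}}{dt} = \frac{\sqrt{z_i^{(\nu-1)}}}{\sqrt{2t+c^2}},$$
and on the other side use the crucial algebraic identity $\phi_{N,i}(t)^2 - \phi_{N,j}(t)^2 = (2t+c^2)(z_i^{(\nu-1)} - z_j^{(\nu-1)})$ to reduce the right-hand side of (\ref{basic-ode-b}) to
$$\frac{1}{\sqrt{2t+c^2}\cdot\sqrt{z_i^{(\nu-1)}}}\Bigl(\sum_{j\ne i} \frac{2z_i^{(\nu-1)}}{z_i^{(\nu-1)}-z_j^{(\nu-1)}} + \nu\Bigr).$$
Comparing both sides, the claim would then be equivalent to the Stieltjes-type relation
\begin{equation}\label{stieltjes-laguerre-plan}
\sum_{j\ne i} \frac{2 z_i^{(\nu-1)}}{z_i^{(\nu-1)}-z_j^{(\nu-1)}} + \nu = z_i^{(\nu-1)} \quad (i=1,\ldots,N),
\end{equation}
which is the analogue for Laguerre zeros of Lemma \ref{char-zero-A}.

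Next I would derive (\ref{stieltjes-laguerre-plan}) from the classical second order ODE for the Laguerre polynomial $L_N^{(\nu-1)}$, namely $x y''(x) + (\nu - x) y'(x) + N y(x) = 0$, evaluated at a zero $z_i^{(\nu-1)}$. Writing $L_N^{(\nu-1)}$ as a constant multiple of $\prod_k (x - z_k^{(\nu-1)})$, one reads off
$$\frac{y''(z_i^{(\nu-1)})}{y'(z_i^{(\nu-1)})} = 2\sum_{j\ne i} \frac{1}{z_i^{(\nu-1)} - z_j^{(\nu-1)}},$$
and dividing the ODE by $y'(z_i^{(\nu-1)})$ at $x=z_i^{(\nu-1)}$ gives precisely (\ref{stieltjes-laguerre-plan}). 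Combined with the substitution above, this completes the verification that $\phi_N$ solves (\ref{basic-ode-b}); the uniqueness statement in the ODE block then implies that this is the only solution with the prescribed initial value $\phi_N(0) = c\cdot(\sqrt{z_1^{(\nu-1)}}, \ldots, \sqrt{z_N^{(\nu-1)}})$.

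The computation is entirely elementary once the Laguerre identity (\ref{stieltjes-laguerre-plan}) is available; there is no real obstacle beyond careful bookkeeping. The only subtle point is confirming that the curve $t\mapsto \phi_N(t)$ lies in the interior of $C_N^B$ for $t>0$ (and for $t=0$ when $c>0$), which is immediate from the strict ordering $z_1^{(\nu-1)} > \ldots > z_N^{(\nu-1)} > 0$ of the Laguerre zeros and the positivity of $\sqrt{2t+c^2}$. This ensures that the singular right-hand side of (\ref{basic-ode-b}) is well-defined along $\phi_N$, so the substitution argument is legitimate.
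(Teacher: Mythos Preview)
Your proposal is correct and follows the natural route. The paper itself does not give a proof of Lemma~\ref{special-solution-B}; it simply writes ``by \cite{AV1} we have:'' and states the result. Your argument---reducing the ODE via the substitution $\phi_{N,i}(t)=\sqrt{2t+c^2}\sqrt{z_i^{(\nu-1)}}$ to the Stieltjes-type identity for Laguerre zeros and then deriving that identity from the differential equation $xy''+(\nu-x)y'+Ny=0$---is exactly the Laguerre analogue of what the paper does in the Hermite case (Lemma~\ref{char-zero-A} followed by Corollary~\ref{special-solution}), so it is fully in line with the paper's methods.
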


We now study the empirical measures of  solutions of  (\ref{basic-ode-b}).
We proceed as in Section 2 and derive recurrence relations for the associated empirical moments.
However,  this works for the even moments only. For this reason we take squares in all 
components of  $\phi_N$ at some stages.
 As we are working here on the Weyl chambers $C_N^B$, no information is lost by taking these squares. 
Moreover, having the well-known
  relation $H_{2N}(x)=d_N L_N^{(-1/2)}(x^2)$ ($d_N>0$  some constants)  in mind
(see (5.6.1) of \cite{S}), we normalize our  empirical measures with $2N$ instead of $N$.

We now choose a family $(x_{N,k})_{1\le k\le N}\in [0,\infty[$ with $(x_{N,1},\ldots,x_{N,N})\in C_N^B$  for  $N\ge2$. 
We then form the associated solutions  $\phi_N(t)$ of  (\ref{basic-ode-b}) with start in $(x_{N,1},\ldots,x_{N,N})$ for  $N\ge2$, and
introduce  the  normalized
 empirical measures
\begin{equation}\label{empirical-gen-lagu}
\mu_{N,t}^2:= \frac{1}{N}(\delta_{\frac{\phi_{N,1}(t)^2}{2N}}+\ldots +\delta_{\frac{\phi_{N,N}(t)^2}{2 N}})
\end{equation}
 for $t\ge0$.
Denote the $l$-th moment $(l\in\mathbb N_0$) of $\mu_{N,t}^2$ by
$$S_{N,l}(t):= \int_{\mathbb R} y^l \> \mu_{N,t}(y) = \frac{1}{2^lN^{l+1}}(\phi_{N,1}(t)^{2l}+\ldots+\phi_{N,N}(t)^{2l}).$$
Then $S_{N,0}(t)=1$.  Moreover, by  (\ref{basic-ode-b}),
\begin{equation}\label{moment-1-lagu}
\frac{d}{dt}S_{N,1}(t)=\frac{2}{2N^{2}}\Bigl(\sum_{i,j=1; i\ne j}^N 
\frac{2\phi_{N,i}(t)^{2}}{\phi_{N,i}(t)^{2}-\phi_{N,j}(t)^{2}} + \nu N \Bigr)= \frac{N(N+\nu-1)}{N^{2}}.
\end{equation}
Furthermore, for $l\ge2$,
\begin{align}
\frac{d}{dt}&S_{N,l}(t)= \frac{2}{2^lN^{l+1}}\Bigl(\sum_{i,j=1; i\ne j}^N 
\frac{2\phi_{N,i}(t)^{2l}}{\phi_{N,i}(t)^{2}-\phi_{N,j}(t)^{2}} + \nu\sum_{i=1}^N\phi_{N,1}(t)^{2l-2} \Bigr)\notag \\
=& \frac{2}{2^lN^{l+1}}\Bigl(2\sum_{1\le i<j\le N}\frac{\phi_{N,i}(t)^{2l}-\phi_{N,j}(t)^{2l}}{\phi_{N,i}(t)^{2}-\phi_{N,j}(t)^{2}}
 + \nu\sum_{i=1}^N\phi_{N,1}(t)^{2l-2} \Bigr)\notag \\
=& \frac{2}{2^lN^{l+1}}\Bigl(\sum_{i,j=1; i\ne j}^N \Bigl( \phi_{N,i}(t)^{2(l-1)}+\phi_{N,i}(t)^{2(l-2)}\phi_{N,j}(t)^{2}+\ldots+
\phi_{N,j}(t)^{2(l-1)}\Bigr)\notag \\
&\quad + \nu\sum_{i=1}^N\phi_{N,1}(t)^{2l-2} \Bigr)\notag.
\notag\end{align}
As 
$$\frac{1}{2^lN^{l+1}}\sum_{i,j=1; i\ne j}^N \phi_{N,i}(t)^{2(l-k)}\phi_{N,j}(t)^{2k}= \frac{1}{2}(S_{N,l-k-1}(t)S_{N,k}(t)-\frac{S_{N,l-1}(t)}{N})$$
for $k=1,2,\ldots,l-2$, and as 
$$\frac{1}{2^lN^{l+1}}\sum_{i,j=1; i\ne j}^N \phi_{N,i}(t)^{2(l-1)}=\frac{N-1}{2N}S_{N,l-1}(t),$$
we obtain
\begin{equation}\label{moment-recurrence-lagu}
\frac{d}{dt}S_{N,l}(t)= l\Bigl(\frac{2N+\nu-l}{N}S_{N,l-1}(t)+
\sum_{k=1}^{l-2} S_{N,l-1-k}(t)S_{N,k}(t)\Bigr).
\end{equation}

Similar to the Hermite case in Section 2, 
these equations show that the limits $c_l(t):=\lim_{N\to\infty}S_{N,l}(t)$ ($l\in\mathbb N_0$)
exist and satisfy the following  recurrence relation:

\begin{lemma}\label{recurrence-sl-laguerre-det}
Let $(x_{N,k})_{1\le k\le N}\subset \mathbb R$ be  starting sequences  such that for all  $l\in \mathbb N_0$,
$$c_l(0):=\lim_{N\to\infty} S_{N,l}(0)=\lim_{n\to\infty}  \frac{1}{2^lN^{l+1}}(x_{N,1}^{2l}+\ldots+x_{N,N}^{2l})<\infty$$
exists. Assume that $\nu=\nu(N)$ depends on $N$ with
$$\lim_{N\to\infty} \frac{\nu(N)}{N}=\nu_0\geq 0.$$
Then
for  $l\in \mathbb N_0$,
$$c_l(t):=\lim_{N\to\infty} S_{N,l}(t)$$
exists locally uniformly in $t\in[0,\infty[$ and satisfies the recurrence relation
\begin{equation}\label{recurrence-laguerre}
c_0(t)=1 \quad\text{and}\quad
c_l(t)= c_l(0)+l \nu_0 \int_0^t c_{l-1}(s) ds + l\int_0^t 
\sum_{k=0}^{l-1} c_{l-1-k}(s)c_{k}(s)\> ds \quad(l\ge 1).
\end{equation}
\end{lemma}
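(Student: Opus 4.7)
The plan is to mimic the proof of Lemma \ref{recurrence-sl-hermite-det} almost verbatim, exploiting the fact that equations (\ref{moment-1-lagu}) and (\ref{moment-recurrence-lagu}) already provide closed-form ODEs for the finite-$N$ moments $S_{N,l}(t)$ in terms of lower-order moments and of $\nu/N$. The proof will be an induction on $l$ establishing, simultaneously, the existence of the locally uniform limits $c_l(t)=\lim_{N\to\infty}S_{N,l}(t)$ and the recurrence (\ref{recurrence-laguerre}).

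For the base cases, $c_0(t)=1$ is immediate from $S_{N,0}(t)\equiv 1$. For $l=1$, integrating (\ref{moment-1-lagu}) yields the explicit formula $S_{N,1}(t)=S_{N,1}(0)+\frac{N+\nu(N)-1}{N}t$, and the assumption $\nu(N)/N\to\nu_0$ together with $c_1(0):=\lim_N S_{N,1}(0)$ gives $c_1(t)=c_1(0)+(1+\nu_0)t$ locally uniformly in $t$. A short check shows that this agrees with (\ref{recurrence-laguerre}) for $l=1$, since the sum $\sum_{k=0}^{0}c_{-k}c_k=1$.

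For the inductive step, assume that for all $m<l$ the limits $c_m(s)$ exist locally uniformly on $[0,\infty)$. Since the $S_{N,m}(s)$ are non-negative and converge locally uniformly, they are uniformly bounded on each compact interval $[0,T]$. Integrating (\ref{moment-recurrence-lagu}) from $0$ to $t$ gives
\begin{equation*}
S_{N,l}(t)=S_{N,l}(0)+l\int_0^t\Bigl(\frac{2N+\nu(N)-l}{N}S_{N,l-1}(s)+\sum_{k=1}^{l-2}S_{N,l-1-k}(s)S_{N,k}(s)\Bigr)ds.
\end{equation*}
The coefficient $(2N+\nu(N)-l)/N\to 2+\nu_0$, each product $S_{N,l-1-k}S_{N,k}$ converges locally uniformly by the induction hypothesis, and all quantities are uniformly bounded on $[0,T]$, so passing to the limit under the integral (by dominated convergence, or by uniform convergence on $[0,T]$) yields
\begin{equation*}
c_l(t)=c_l(0)+l\int_0^t\Bigl((2+\nu_0)c_{l-1}(s)+\sum_{k=1}^{l-2}c_{l-1-k}(s)c_k(s)\Bigr)ds,
\end{equation*}
again locally uniformly in $t$ (since the RHS is continuous in $t$).

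Finally, one rewrites this by splitting off the boundary terms of the sum: using $c_0\equiv 1$, the combination $2c_{l-1}(s)+\sum_{k=1}^{l-2}c_{l-1-k}(s)c_k(s)$ equals $\sum_{k=0}^{l-1}c_{l-1-k}(s)c_k(s)$, which produces exactly the form (\ref{recurrence-laguerre}). I expect the only real subtlety to be the uniform-on-compacta control of the $S_{N,l}$, which is why the induction must carry local uniform convergence (and hence uniform boundedness on $[0,T]$) as part of its hypothesis; once that is in place the limit passage inside the integral is routine and the algebraic identification of the two recurrence forms is purely formal.
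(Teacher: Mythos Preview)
Your proposal is correct and follows exactly the approach the paper intends: the paper does not give a separate proof of this lemma but simply states that ``similar to the Hermite case in Section 2, these equations show that the limits\ldots exist and satisfy the following recurrence relation,'' i.e., one repeats the induction argument of Lemma~\ref{recurrence-sl-hermite-det} using (\ref{moment-1-lagu}) and (\ref{moment-recurrence-lagu}) in place of the corresponding Hermite identities. Your handling of the base cases, the passage to the limit under the integral via the locally uniform induction hypothesis, and the algebraic rewriting $(2+\nu_0)c_{l-1}+\sum_{k=1}^{l-2}c_{l-1-k}c_k=\nu_0 c_{l-1}+\sum_{k=0}^{l-1}c_{l-1-k}c_k$ are all exactly what is required.
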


As in Section 2 we now show that under mild conditions on the  starting sequences,
 the $c_l(t)$ are the moments of some
unique probability measures  which can be described via free probability.

We first consider the case $\nu_0=0$ which includes the case that all $\nu$ are independent of $N$.
This special case can be easily reduced to Section 2. We here denote the image of some probability measure 
$\mu$ under some continuous mapping $f$ by  $f(\mu)$. We use this notation in particular for the maps
 $x\mapsto|x|$ and  $x\mapsto x^2$ and write $|\mu|$ and $\mu^2$. Moreover, for a probability measure $\mu$ on $[0,\infty[$, let 
 $\mu_{even}$ the unique even  probability measure on $\mathbb R$ with $|\mu_{even}|=\mu$.

\begin{theorem}\label{general-free-convolution-b-null}
Let $\mu\in M^1([0,\infty[)$ be a probability measure satisfying the moment condition 
(\ref{strong-carleman}). Let
 $(x_{N,n})_{N\ge1, 1\le n\le N}\subset\mathbb [0,\infty[$ 
with $(x_{N,1},\ldots, x_{N, N})\in C_N^B$ for  $ N\ge2$ such that the normalized empirical measures
\begin{equation}\label{starting-empirical-a1}
\mu_{N,0}:= \frac{1}{N}(\delta_{x_{N, 1}/\sqrt {2N}}+\ldots \delta_{x_{N, N}/\sqrt {2N}})
\end{equation}
tend weakly to  $\mu$ for $N\to\infty$. If we form the associated solutions
$\phi_{N}(t)$  of (\ref{basic-ode-b}) and 
the   normalized
 empirical measures
$$\mu_{N,t}:= \frac{1}{N}(\delta_{\phi_{N,1}(t)/\sqrt {2N}}+\ldots +\delta_{\phi_{N,N}(t)/\sqrt {2N}}) \quad\quad(t\ge0),$$
then  for 
$t\in[0,\infty[$, 
the  $\mu_{N,t}$ tend weakly to  $|\mu_{sc, 2\sqrt{t}}\boxplus \mu_{even}|$.
\end{theorem}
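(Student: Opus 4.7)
The plan is to reduce this B-type theorem to the already-established A-type Theorem \ref{general-free-convolution-a} applied to the symmetric extension $\mu_{even}$. The key observation is that, when $\nu_0=0$, the recurrence for the limiting moments of $\mu^2_{N,t}$ produced by the B-ODE matches the recurrence for the \emph{even} moments produced by the A-ODE from a symmetric initial measure.

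First, by Lemma \ref{recurrence-sl-laguerre-det} the limits $c_l(t):=\lim_N S_{N,l}(t)$ exist for all $l\ge 0$, locally uniformly in $t$, and in the case $\nu_0 = 0$ satisfy
\begin{equation*}
c_l(t) = c_l(0) + l\int_0^t \sum_{k=0}^{l-1} c_{l-1-k}(s)\, c_k(s)\, ds,\qquad c_0(t)\equiv 1,
\end{equation*}
with $c_l(0) = \int x^{2l}\,d\mu(x) = \int x^{2l}\,d\mu_{even}(x)$. Next, set $\nu^A_t := \mu_{sc,2\sqrt t}\boxplus \mu_{even}$. Both $\mu_{sc,2\sqrt t}$ and $\mu_{even}$ are symmetric about the origin, so $\nu^A_t$ is symmetric and all its odd moments vanish. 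Writing $d_l(t)$ for its moments, Theorem \ref{general-free-convolution-a} (applied to $\mu_{even}$, which inherits the bound (\ref{strong-carleman}) from $\mu$) together with the recurrence (\ref{recurrence-herm}) yields
\begin{equation*}
d_{2l}(t)=d_{2l}(0)+l\int_0^t\sum_{k=0}^{2l-2} d_{2l-2-k}(s)d_k(s)\,ds.
\end{equation*}
In the inner sum the only nonzero contributions come from even $k$; writing $k=2j$ collapses the sum to $\sum_{j=0}^{l-1}d_{2(l-1-j)}(s)d_{2j}(s)$. Hence $e_l(t):=d_{2l}(t)$ solves the same recurrence as $c_l(t)$ with identical data $e_l(0)=c_l(0)$, so $c_l(t)=d_{2l}(t)$ for all $l,t$.

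To upgrade moment convergence to weak convergence, I would inductively bound $|c_l(t)|\le (R(t)\,l)^{2l}$ on compact time intervals, exactly as in the proof of Proposition \ref{general-free-convolution-a-firststep}: the quadratic recurrence combined with the bound (\ref{strong-carleman}) on $c_l(0)$ supplies the inductive step. It follows that $(c_l(t))_l$ is the moment sequence of a unique probability measure $\rho_t$ on $[0,\infty[$, and by the moment convergence theorem $(\mu_{N,t})^2\to\rho_t$ weakly. Since $c_l(t)=d_{2l}(t)$ is simultaneously the $l$-th moment of $(|\nu^A_t|)^2$, we obtain $\rho_t=(|\nu^A_t|)^2$. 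Finally, $x\mapsto\sqrt x$ is continuous on $[0,\infty[$, and the measures $\mu_{N,t}$ are the push-forwards of $(\mu_{N,t})^2$ under this map; by the continuous mapping theorem for weak convergence, $\mu_{N,t}\to |\nu^A_t|=|\mu_{sc,2\sqrt t}\boxplus\mu_{even}|$ weakly.

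I expect the combinatorial matching in the second step to be the conceptual heart of the argument, but it is entirely elementary once one notices that half of the terms in the A-recurrence vanish by symmetry. The genuine technical obstacle is that the B-ODE only produces a closed recurrence for \emph{even} moments of $\mu_{N,t}$, which is precisely why the reduction must be carried out at the level of the squared measure $(\mu_{N,t})^2$ and then transferred back via the continuous map $x\mapsto\sqrt x$; the Carleman-type tail estimate needed to make this rigorous is the same induction already performed in Section 2.
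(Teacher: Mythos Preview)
Your proof is correct and follows essentially the same route as the paper's: both compare the $\nu_0=0$ recurrence from Lemma \ref{recurrence-sl-laguerre-det} with the A-type recurrence (\ref{recurrence-herm}) restricted to even indices (using the symmetry of $\mu_{sc,2\sqrt t}\boxplus\mu_{even}$ to kill the odd terms), verify a Carleman-type bound by the same induction as in Proposition \ref{general-free-convolution-a-firststep}, and then pass from $(\mu_{N,t})^2$ back to $\mu_{N,t}$ via $x\mapsto\sqrt x$. The only cosmetic difference is that the paper explicitly writes down the symmetrized starting sequence $(x_{N,1},\ldots,x_{N,N},-x_{N,N},\ldots,-x_{N,1})\in C_{2N}^A$ to justify applying Section 2 to $\mu_{even}$, whereas you invoke Theorem \ref{general-free-convolution-a} for $\mu_{even}$ directly.
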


\begin{proof} 
For $N\ge2$ and   $(x_{N,1},\cdots, x_{N,N})\in C_N^B$ we define
\begin{equation}\label{start-laguerre}
(y_{2N,1},\cdots, y_{2N,2N})=(x_{N,1},\cdots, x_{N,N}, -x_{N,N},\cdots, -x_{N,1})\in C_{2N}^A.
\end{equation}
Clearly, the associated empirical measures 
$$\mu_{N,even}:=\frac{1}{2N}(\delta_{y_{2N,1}/\sqrt{2N} }+\ldots +\delta_{y_{2N,1}/\sqrt{2N} })$$
tend weakly to $\mu_{even}$. Moreover, all odd moments of $\mu_{even}$ disappear,
 and the even moments of  $\mu_{even}$ and $\mu$ are equal. In particular,  $\mu_{even}$ also satisfies
 the moment condition (\ref{strong-carleman}).

We now consider the even  measures $\mu_{sc, 2\sqrt{t}}\boxplus \mu_{even}$ 
whose odd moments disappear, and whose even moments satisfy (\ref{recurrence-herm}) by  Section 2.
We obtain from  (\ref{recurrence-herm}) for these even moments and from (\ref{recurrence-laguerre}),
that for all $l\ge0$, the $c_l(t)$ from Lemma \ref{recurrence-sl-laguerre-det} are just the
 $2l$-th moments of $\mu_{sc, 2\sqrt{t}}\boxplus \mu_{even}$. This, Carleman's condition (\ref{carleman})
for $(\mu_{sc, 2\sqrt{t}}\boxplus \mu_{even})^2$, Lemma \ref{recurrence-sl-laguerre-det},
 and moment convergence  now  imply that 
the  probability measures $\mu_{N,t}^2$ tend weakly to
$(\mu_{sc, 2\sqrt{t}}\boxplus \mu_{even})^2$. This implies the claim.
\end{proof}

If we start the ODE (\ref{basic-ode-b}) in $x_0=0$,  Theorem 
\ref{general-free-convolution-b-null} and Lemma \ref{special-solution-B} lead to the following 
well  known result for the empirical measures of the  zeroes $z_{1,N}^{(\nu-1)},\ldots,z_{N,N}^{(\nu-1)}$
of $L_N^{(\nu-1)}$; see e.g. \cite{G}:

\begin{corollary} Let $\nu>0$ be fixed. The empirical measures
$$\mu_{N}:= \frac{1}{N}(\delta_{z_{1,N}^{(\nu-1)}/ (4N)}+\ldots +\delta_{z_{N,N}^{(\nu-1)}/ (4N)})$$
tend weakly to the beta distribution $\beta(1/2,3/2)$ on $[0,1]$ with the density
$$\frac{2}{\pi}t^{-1/2}(1-t)^{1/2}{\bf 1}_{]0,1[}(t).$$
\end{corollary}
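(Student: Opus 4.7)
The plan is to derive this corollary as an immediate consequence of Theorem \ref{general-free-convolution-b-null} applied to the special solution of the ODE (\ref{basic-ode-b}) starting at the origin, whose components are given explicitly in terms of the Laguerre zeros by Lemma \ref{special-solution-B}. Since $\nu>0$ is fixed, trivially $\nu(N)/N\to 0=\nu_0$, so the hypothesis of the theorem is satisfied. With the constant starting data $x_{N,n}=0$, the starting empirical measures are $\delta_0$ for every $N$, and in particular satisfy the moment condition (\ref{strong-carleman}) trivially; we therefore apply Theorem \ref{general-free-convolution-b-null} with $\mu=\delta_0$.

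First, I invoke Lemma \ref{special-solution-B} with $c=0$, giving the solution
$$\phi_N(t)=\sqrt{2t}\bigl(\sqrt{z_{1,N}^{(\nu-1)}},\ldots,\sqrt{z_{N,N}^{(\nu-1)}}\bigr)$$
of (\ref{basic-ode-b}) with $\phi_N(0)=0$. Choosing $t=1/4$ yields
$$\frac{\phi_{N,i}(1/4)^2}{2N}=\frac{z_{i,N}^{(\nu-1)}}{4N},$$
so the empirical measure $\mu_N$ of the corollary coincides exactly with the push-forward of $\mu_{N,1/4}$ (in the notation of Theorem \ref{general-free-convolution-b-null}) under $x\mapsto x^2$.

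By Theorem \ref{general-free-convolution-b-null}, $\mu_{N,1/4}$ converges weakly to $|\mu_{sc,1}\boxplus(\delta_0)_{even}|=|\mu_{sc,1}|$, using that $\delta_0$ is the neutral element for $\boxplus$ and that $2\sqrt{1/4}=1$. Since the map $x\mapsto x^2$ is continuous and $(|\mu_{sc,1}|)^2=(\mu_{sc,1})^2$ as push-forwards under squaring, it follows that $\mu_N$ converges weakly to the push-forward of the standard semicircle law $\mu_{sc,1}$ under $x\mapsto x^2$. A routine density computation finishes the job: if $X$ has density $\frac{2}{\pi}\sqrt{1-x^2}$ on $[-1,1]$, then $Y=X^2$ has density
$$f_Y(y)=\frac{f_X(\sqrt{y})+f_X(-\sqrt{y})}{2\sqrt{y}}=\frac{2}{\pi}\,y^{-1/2}(1-y)^{1/2}\quad\text{on }(0,1),$$
which is precisely $\beta(1/2,3/2)$.

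There is no serious obstacle here; the corollary is a direct specialization of Theorem \ref{general-free-convolution-b-null} together with the explicit ODE solution from Lemma \ref{special-solution-B}. The only point requiring care is matching the $1/(4N)$ normalization of $\mu_N$ against the $1/\sqrt{2N}$ normalization used in the theorem, which forces the choice $t=1/4$ and hence semicircle radius $R=1$; the resulting squared semicircle law then produces exactly the advertised beta distribution.
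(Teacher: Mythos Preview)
Your proof is correct and follows exactly the route the paper indicates: apply Theorem \ref{general-free-convolution-b-null} with $\mu=\delta_0$ to the explicit solution of Lemma \ref{special-solution-B} with $c=0$, then identify the corollary's empirical measure as the push-forward under $x\mapsto x^2$ (which forces $t=1/4$, hence semicircle radius $1$), and finally compute the density of the squared semicircle law. The paper does not spell out these details, but your choice of $t=1/4$, the matching of normalizations, and the beta density computation are all correct.
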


We now turn to the general case $\nu_0\geq 0$.
 We proceed similarly as in the Hermite case and 
  deduce first a PDE for the Stieltjes transform $G$. This  PDE will then be transformed 
into a PDE for the R-transform. Then we combine our results for $\nu_0=0$ and the R-transform
 for an appropriately parametrized Marchenko-Pastur distribution to identify the limit measure
 in general.

 Recall that for the parameters $c\ge0$, $t>0$, the
Marchenko-Pastur distribution $\mu_{MP,c,t}$ is the probability measure on $[x_-,x_+]\subset[0,\infty[$ with $\mu_{MP,c,t}=\tilde{\mu}$ for $c\geq 1$ and $\mu_{MP,c,t}=(1-c)\delta_0+c \tilde{\mu}$ for $0\leq c < 1$, where
$x_\pm:=t(\sqrt{c}\pm1)^2$  and $\tilde{\mu}$ has the Lebesgue density
$$\frac{1}{2\pi x t}\sqrt{(x_+-x)(x-x_-)}\cdot{\bf 1}_{[x_-,x_+]}(x).$$
We also recall (see Exercise 5.3.27 of \cite{AGZ}), that the Marchenko-Pastur distributions
have the  R-transforms 
\begin{equation}\label{r-transform-mp}
R_{MP,c,t}(z)=\frac{ct}{1-tz}.
\end{equation}
As these R-transforms are linear in  $c$, we in particular conclude that
\begin{equation}\label{mp-product-formula} \mu_{MP,a,t}\boxplus \mu_{MP,b,t}=\mu_{MP,a+b,t}.\end{equation}
Now we proceed as in Section 2. The proof of the first step is completely analog to Proposition
\ref{general-free-convolution-a-firststep}.

\begin{proposition}\label{general-free-convolution-b-firststep}
Let $\lim_{N\to\infty} \nu(N)/N=\nu_0\ge0$ in the setting of Lemma \ref{recurrence-sl-laguerre-det}.
Then the limits
 $$c_l(t):=\lim_{N\to\infty} S_{N,l}(t)  \quad\quad (t\in[0,\infty[, l\ge0)$$
exist. Moreover, if the moment condition (\ref{strong-carleman}) holds for $\mu^2$, then
 for each 
$t\in[0,\infty[$, the sequence $(c_l(t))_{l\ge0}$ is the sequence of moments of some unique probability
measure $\mu_t^2\in M^1(\mathbb R)$ for which  (\ref{strong-carleman}) also holds. Moreover, 
the  $\mu_{N,t}^2$ tend weakly to  $\mu_t^2$ for $N\to\infty$.
\end{proposition}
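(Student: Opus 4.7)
The plan is to follow the blueprint of Proposition \ref{general-free-convolution-a-firststep}, so the argument splits into three ingredients: (i) existence of the pointwise limits $c_l(t)$ together with a usable recurrence; (ii) an $l$-uniform moment bound that guarantees Carleman's condition for each fixed $t$; and (iii) an application of the moment convergence theorem to promote pointwise convergence of moments to weak convergence of the measures $\mu_{N,t}^2$.

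Ingredient (i) is already delivered by Lemma \ref{recurrence-sl-laguerre-det}: the limits $c_l(t) = \lim_N S_{N,l}(t)$ exist locally uniformly in $t$ and satisfy the recurrence (\ref{recurrence-laguerre}). For (ii) I would fix $t > 0$ and prove by induction on $l$ that there exist constants $A,R$ (depending on $c,\nu_0,t$) with $|c_l(s)| \le A(Rl)^l$ for all $l \ge 0$ and $s\in[0,t]$. Plugging the inductive hypothesis into (\ref{recurrence-laguerre}) and using the elementary bound $(l-1-k)^{l-1-k} k^k \le (l-1)^{l-1}$ (with convention $0^0 := 1$) gives the sum estimate
\begin{equation*}
\sum_{k=0}^{l-1} |c_{l-1-k}(s)\,c_k(s)| \;\le\; l\,A^2\,R^{l-1}(l-1)^{l-1},
\end{equation*}
and combining it with $|c_l(0)| \le (cl)^l$ from (\ref{strong-carleman}) together with the drift term $l\nu_0\int_0^t c_{l-1}(s)\,ds$ should close the induction once $A,R$ are chosen large enough depending on $c,\nu_0,t$. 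Ingredient (iii) then follows because the resulting bound is sufficient for Carleman's condition (\ref{carleman}); hence $(c_l(t))_l$ is a determinate moment sequence and produces a unique $\mu_t^2 \in M^1([0,\infty[)$, and the moment convergence theorem upgrades the pointwise convergence of moments to weak convergence $\mu_{N,t}^2 \to \mu_t^2$.

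The main obstacle is calibrating the induction step in (ii). The Laguerre recurrence (\ref{recurrence-laguerre}) is structurally worse than its Hermite counterpart (\ref{recurrence-herm}) in three respects that conspire against the clean Hermite estimate (\ref{est-c-l}): the index drop is $1$ instead of $2$, the leading prefactor is $l$ instead of $l/2$, and there is an extra drift of size $l\nu_0 c_{l-1}$. Each effect contributes a factor of order $l$ to the inductive error, so a pair of constants $(A,R)$ independent of $l$ may not immediately absorb everything into the sharp bound $A(Rl)^l$. I anticipate overcoming this either by allowing $A=A(t)$ to grow with $t$ along a Riccati-type differential inequality (which remains finite on $[0,t]$ once $R$ is chosen large enough depending on $\nu_0,t$), or by settling for the slightly weaker bound $|c_l(s)| \le (R(t) l^2)^l$, which still satisfies Carleman's condition. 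Either variant preserves the determinacy needed for ingredient (iii) and thereby yields the desired weak convergence $\mu_{N,t}^2 \to \mu_t^2$.
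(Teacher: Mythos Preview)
Your outline is exactly what the paper does: it simply declares the proof ``completely analog to Proposition~\ref{general-free-convolution-a-firststep}'' and gives no further details, so your three-ingredient breakdown is precisely what is intended.

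Your worry about closing the induction with the target $(Rl)^l$ is legitimate: the naive Hermite-style estimate does overshoot by a factor of $l$ here, for the reasons you list. Your fallback bound $|c_l(s)|\le (R\,l^2)^l$ does close the induction, and since the $\mu_{N,t}^2$ are supported on $[0,\infty[$ it gives Stieltjes--Carleman determinacy and hence the weak convergence $\mu_{N,t}^2\to\mu_t^2$. However, it does \emph{not} recover the proposition's literal claim that $\mu_t^2$ satisfies (\ref{strong-carleman}). The clean way to get the sharp bound is a factorial ansatz $c_l(s)\le A^l\,l!$: using $\sum_{k=0}^{l-1}(l{-}1{-}k)!\,k!=(l{-}1)!\sum_{k=0}^{l-1}\binom{l-1}{k}^{-1}\le 3\,(l{-}1)!$ and $(cl)^l\le (ce)^l\,l!$, the recurrence (\ref{recurrence-laguerre}) yields
\[
c_l(\tau)\;\le\;(ce)^l\,l!\;+\;(\nu_0+3)\,t\,A^{l-1}\,l!,
\]
which is $\le A^l\,l!$ once $A\ge\max\bigl(2ce,\,2(\nu_0+3)t\bigr)$. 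Since $A^l\,l!\le (Al)^l$, this delivers (\ref{strong-carleman}) for $\mu_t^2$ with constant $A$ and finishes the proof exactly as stated.
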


We next derive PDEs 
for the  Stieltjes transforms
$$G(t,z):=G_{\mu_t^2}(z), \quad G^N(t,z):=G_{\mu_{N,t}^2}(z) \quad\quad (t\ge0, \> z\in H=\{z\in\mathbb C:\> \Im z>0\})$$
of the  measures $\mu_t^2$ and  $\mu_{N,t}^2$. 
In the setting of 
Proposition \ref{general-free-convolution-b-firststep} we now have:

\begin{proposition}\label{general-free-convolution-b-secondstep}
\begin{enumerate}
\item[\rm{(1)}] For all $N\in\mathbb N$, $t\ge0$, $z\in H$, 
$$G_t^N(t,z)= -\frac{\nu(N)}{N} G^N_z(t,z)-2 z G^N_z(t,z)G^N(t,z) -(G^N)^2(t,z) - \frac{1}{N} E^N(t,z)$$
with the error term  $E^N(t,z)$ defined below in (\ref{error-gn-b}).
\item[\rm{(2)}] Assume that in addition the moment condition  (\ref{strong-carleman}) holds for 
$\mu^2$. Then for $t\ge0$, $z\in H$, 
\begin{eqnarray}\label{pde-g-b}
G_t(t,z)&=&-\nu_0 G_z(t,z)-2 z G_z(t,z)G(t,z) -G^2(t,z)\\
G(0,z)&=&G_{\mu}(z).\nonumber
\end{eqnarray}
\end{enumerate}
\end{proposition}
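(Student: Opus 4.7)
The plan is to mimic the A-case argument of Proposition \ref{general-free-convolution-a-secondstep} with the B-case recurrence replacing (\ref{derivative-recursion-n-a}). First I would absorb the boundary terms $k=0$ and $k=l-1$ of the convolution into (\ref{moment-recurrence-lagu}) and combine it with (\ref{moment-1-lagu}) into the single recurrence
$$\frac{d}{dt} S_{N,l}(t) = l\left(\frac{\nu(N)-l}{N}\, S_{N,l-1}(t) + \sum_{k=0}^{l-1} S_{N,l-1-k}(t)\, S_{N,k}(t)\right), \qquad l\ge 1,$$
which holds without exceptional small-$l$ cases.

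For (1), expand $G^N(t,z)=\sum_{l\ge 0} S_{N,l}(t)/z^{l+1}$ for $|z|$ large (depending on $N,t$), differentiate termwise in $t$, insert the recurrence, and split the result into three Laurent series corresponding to the three summands on the right. Using the elementary identities
\begin{align*}
-G^N_z(t,z) &= \sum_{m\ge 0} \frac{m+1}{z^{m+2}}\, S_{N,m}(t),\\
z G^N_{zz}(t,z)+G^N_z(t,z) &= \sum_{m\ge 0} \frac{(m+1)^2}{z^{m+2}}\, S_{N,m}(t),\\
-2z G^N(t,z) G^N_z(t,z)-(G^N)^2(t,z) &= \sum_{m\ge 0} \frac{m+1}{z^{m+2}} \sum_{k=0}^{m} S_{N,m-k}(t)\, S_{N,k}(t),
\end{align*}
the three pieces are identified respectively with $-\frac{\nu(N)}{N}G^N_z$, $-\frac{1}{N}(zG^N_{zz}+G^N_z)$, and $-2zG^N G^N_z-(G^N)^2$. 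Defining
$$E^N(t,z):=z G^N_{zz}(t,z)+G^N_z(t,z),$$
this yields the formula of (1) for $|z|$ sufficiently large; since both sides are analytic on $H$, analytic continuation in $z$ extends the identity to all of $H$.

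For (2), by Proposition \ref{general-free-convolution-b-firststep} we have $\mu^2_{N,t}\to\mu^2_t$ weakly, and hence $G^N(t,z)\to G(t,z)$ pointwise on $H$. Since the $G^N(t,\cdot)$ are analytic on $H$ and uniformly bounded by $1/\operatorname{Im} z$, Vitali's theorem upgrades this to locally uniform convergence, and the Cauchy integral formula transfers it to $G^N_z\to G_z$ and $G^N_{zz}\to G_{zz}$ locally uniformly on $H$. In particular $N^{-1}E^N(t,z)\to 0$ locally uniformly, and the PDE in (1) passes to the limit, producing the PDE in (2). The initial condition is immediate since $\mu^2_{N,0}\to \mu^2$, so $G(0,z)$ is the Stieltjes transform of the initial limit measure.

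The only delicate point I anticipate is making sure the termwise series manipulations are legitimate in some common annulus. This is handled by restricting initially to $|z|>R(N,t)$ large enough for absolute convergence of the Laurent expansion of $G^N(t,\cdot)$ (which is available since the particles of the ODE/SDE lie in a bounded set for each $N,t$), and then extending by analytic continuation; the passage to the limit in (2) does not require a uniform-in-$N$ annulus since pointwise convergence on $H$ plus local boundedness already suffices via Vitali.
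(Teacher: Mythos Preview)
Your approach is essentially identical to the paper's: expand $G^N$ as a Laurent series, differentiate termwise in $t$, insert the moment recurrence, and identify the pieces with $-\tfrac{\nu(N)}{N}G_z^N$, the error term, and $-\partial_z[z(G^N)^2]$; your unified recurrence $\tfrac{d}{dt}S_{N,l}=l\bigl(\tfrac{\nu-l}{N}S_{N,l-1}+\sum_{k=0}^{l-1}S_{N,l-1-k}S_{N,k}\bigr)$ in fact streamlines the bookkeeping the paper does case by case. Two remarks: your formula $E^N=zG_{zz}^N+G_z^N$ is the correct one (the paper's displayed identity \eqref{error-gn-b} with the minus sign is a sign slip, as $\sum_{m\ge0}(m+1)^2S_{N,m}z^{-m-2}=zG_{zz}^N+G_z^N$), and in part~(2) you should make ``passes to the limit'' precise by integrating (1) over $[0,t]$ and invoking dominated convergence, using that the right-hand side of (1) is bounded on compacta of $H$ uniformly in $N$ (via $|G^N|\le 1/\Im z$, $|G_z^N|\le 1/(\Im z)^2$, $|G_{zz}^N|\le 2/(\Im z)^3$ and $\nu(N)/N$ bounded) --- this is exactly how the paper handles the analogous step in the A-case.
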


\begin{proof}
 For $t\ge0$ and $z\in H$ with $|z|$ sufficiently large (depending on $N$)  we have
$$G^N(t,z)=\int_{\mathbb R} \frac{1}{z-x}\> d\mu_{N,t}(x)  = \sum_{l=0}^\infty \frac{S_{N,l}(t)}{z^{l+1}}$$
and thus
$$G^N_t(t,z)=  \sum_{l=0}^\infty \frac{1}{z^{l+1}}    \frac{d}{dt}S_{N,l}(t).$$
If we apply the recurrence relation
 (\ref{moment-recurrence-lagu}) and the start (\ref{moment-1-lagu}), we obtain
\begin{eqnarray*}
G_t^N(t,z)&=& \frac{N+\nu-1}{N}\cdot \frac{1}{z^{2}} \\ 
&&+\sum_{l=2}^\infty   \frac{1}{z^{l+1}} l\Bigl(\frac{2N+\nu-l}{N}S_{N,l-1}(t)+
\sum_{k=1}^{l-2} S_{N,l-1-k}(t)S_{N,k}(t)\Bigr)\\
&=&\sum_{l=0}^\infty  \frac{1}{z^{l+2}} (l+1)\sum_{k=0}^{l} S_{N,l-k}(t)S_{N,k}(t)+\frac{\nu}{N}\sum_{l=0}^\infty(l+1) \frac{1}{z^{l+2}}S_{N,l}(t)\\
&&-\frac{1}{N} E^N(t,z)
\end{eqnarray*}
with 
 $$E^N(t,z):=  \sum_{l=0}^\infty   \frac{(l+1)^2}{z^{l+2}}\cdot S_{N,l-2}(t).$$
Using as in the proof of (\ref{general-free-convolution-a-secondstep})
$$z\cdot G_z^N(t,z)=-\sum_{l=0}^\infty    \frac{l+1}{z^{l+1}}\cdot S_{N,l}(t) , \quad
z^2\cdot G_{zz}^N(t,z)=\sum_{l=0}^\infty  \frac{(l+1)(l+2)}{z^{l+1}}\cdot S_{N,l}(t),$$
we obtain by some simple calculation that
\begin{equation}\label{error-gn-b}
 E^N(t,z)= z G_{zz}^N(t,z)-G_z^N(t,z)
\end{equation}
As
\begin{eqnarray*}
\lefteqn{\sum_{l=0}^\infty  \frac{1}{z^{l+2}} (l+1)\sum_{k=0}^{l} S_{N,l-k}(t)S_{N,k}(t)+\frac{\nu}{N}\sum_{l=0}^\infty(l+1) \frac{1}{z^{l+2}}S_{N,l}(t)}&&\\
&=&\sum_{l=0}^\infty  \frac{1}{z^{l+2}} (l-k+1)\sum_{k=0}^{l} S_{N,l-k}(t)S_{N,k}(t)+\sum_{l=0}^\infty  \frac{1}{z^{l+2}} (k+1)\sum_{k=0}^{l} S_{N,l-k}(t)S_{N,k}(t)\\
&&-\sum_{l=0}^\infty  \frac{1}{z^{l+2}}\sum_{k=0}^{l} S_{N,l-k}(t)S_{N,k}(t)+\frac{\nu}{N}\sum_{l=0}^\infty(l+1) \frac{1}{z^{l+2}}S_{N,l}(t)\\
&=&-\frac{\nu}{N} G^N_z(t,z)-2 z G^N_z(t,z)G^N(t,z) -(G^N)^2(t,z),
\end{eqnarray*}
part (1) follows for $z\in H$ sufficiently large. 
As both sides of the equation in part (1) are analytic in $z\in H$, this equation
 holds for all  $z\in H$.

For (2) the arguments are the same as in the proof of (\ref{general-free-convolution-a-secondstep}).
\end{proof}
Now we  turn to the main result of this section:

\begin{theorem}\label{free-convolution-b}
Let $\mu\in M^1([0,\infty[)$ be a probability measure such that $\mu^2$ satisfies
 the moment condition (\ref{strong-carleman}).
Let
 $(x_{N,n})_{N\ge1, 1\le n\le N}\subset[0,\infty[$ 
with $(x_{N, 1},\ldots, x_{N, N})\in C_N^B$ such that the normalized empirical measures
\begin{equation}
\mu_{N,0}:= \frac{1}{N}(\delta_{x_{N, 1}/\sqrt{2N}}+\ldots \delta_{x_{N, N}/\sqrt{2N}})
\end{equation}
tend weakly to  $\mu$ for $N\to\infty$. For $N\ge2$ we form the  solutions
$\phi_N$  of (\ref{basic-ode-b}) with start in
$(x_{N,1},\cdots, x_{N,N})$. If
$$\lim_{N\to\infty} \frac{\nu(N)}{N}=\nu_0\geq 0,$$
then for each 
$t\in[0,\infty[$, the associated  normalized
 empirical measures
$$\mu_{N,t}:= \frac{1}{N}(\delta_{\frac{\phi_{N,1}(t)}{\sqrt{2 N}}}+\ldots +\delta_{\frac{\phi_{N,N}(t)}{\sqrt{2N}}}) \quad\quad(t\ge0),$$
tend weakly to  $\sqrt{\mu_{MP, \nu_0, t}\boxplus (\mu_{sc, 2\sqrt{t}}\boxplus \mu_{even})^2}$.
\end{theorem}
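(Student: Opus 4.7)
The strategy extends the Stieltjes/R-transform analysis of Section 2 to the present type B setting, combining the $\nu_0=0$ case already identified in Theorem \ref{general-free-convolution-b-null} with a Marchenko--Pastur contribution that accounts for $\nu_0>0$. First I invoke Proposition \ref{general-free-convolution-b-firststep}: under the moment condition (\ref{strong-carleman}) for $\mu^2$ it yields $\mu_{N,t}^2\to\mu_t^2$ weakly, with $\mu_t^2\in M^1([0,\infty[)$ uniquely determined by its moments $c_l(t)$. Since all particles $\phi_{N,i}(t)\ge 0$, the continuous mapping theorem applied to $x\mapsto\sqrt{x}$ reduces the claim to
$$\mu_t^2=\mu_{MP,\nu_0,t}\boxplus\bigl(\mu_{sc,2\sqrt{t}}\boxplus\mu_{even}\bigr)^2.$$

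Next I transform the PDE of Proposition \ref{general-free-convolution-b-secondstep}(2) for $G(t,z)=G_{\mu_t^2}(z)$ into a PDE for the R-transform $R(t,w):=R_{\mu_t^2}(w)$, exactly along the lines of (\ref{G-R-1})--(\ref{pde-r-a}). Differentiating $z=R(t,G(t,z))+1/G(t,z)$ in $z$ and in $t$ gives $R_t(t,G)=-G_t/G_z$ and $G^2/G_z=G^2R_w(t,G)-1$; substituting the PDE $G_t=-\nu_0 G_z-2zG_zG-G^2$ and then rewriting the explicit $z$ via $z=R+1/G$ produces, after setting $w=G$, the first-order transport equation
$$R_t(t,w)=\nu_0+1+2wR(t,w)+w^2R_w(t,w),\qquad R(0,w)=R_{\mu^2}(w).$$

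I then verify that the candidate $\tilde R(t,w):=R_{MP,\nu_0,t}(w)+S(t,w)$, with $S(t,w)$ the R-transform of $(\mu_{sc,2\sqrt{t}}\boxplus\mu_{even})^2$, solves the same Cauchy problem. Applying Theorem \ref{general-free-convolution-b-null} to the starting measure $\mu$ identifies $S$ as the R-transform of the limit $\mu_t^2$ in the case $\nu_0=0$, so the derivation above specialized to $\nu_0=0$ gives $S_t=1+2wS+w^2S_w$ with $S(0,w)=R_{\mu_{even}^2}(w)=R_{\mu^2}(w)$. For the Marchenko--Pastur part I use the explicit formula $R_{MP,\nu_0,t}(w)=\nu_0 t/(1-tw)$ from (\ref{r-transform-mp}); a direct computation using the algebraic identity $(1-tw)^2+2tw(1-tw)+t^2w^2=1$ yields $\partial_t R_{MP,\nu_0,t}=\nu_0+2wR_{MP,\nu_0,t}+w^2\partial_w R_{MP,\nu_0,t}$. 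Adding the two PDEs produces exactly the PDE for $R$, with matching initial data $0+R_{\mu^2}=R_{\mu^2}$.

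Uniqueness for this linear PDE (solvable by characteristics on any domain on which the coefficients are analytic) yields $R=\tilde R$ on a neighborhood of $w=0$; by injectivity of the R-transform on measures satisfying (\ref{strong-carleman}) (Section 5.3 of \cite{AGZ}) and additivity under $\boxplus$, we conclude $\mu_t^2=\mu_{MP,\nu_0,t}\boxplus(\mu_{sc,2\sqrt{t}}\boxplus\mu_{even})^2$, and pushing forward by $x\mapsto\sqrt{x}$ finishes the proof. The main technical obstacle is the passage from the PDE for $G$ to the one for $R$: one has to identify a common analytic domain near $z=\infty$ (equivalently $w=0$) on which the functional relation $z=R(t,G(t,z))+1/G(t,z)$ can be inverted and all appearing R-transforms admit convergent expansions; this is handled as in Section 2 using the moment bound (\ref{strong-carleman}) for $\mu^2$ propagated to $\mu_t^2$ by the analogue of Proposition \ref{general-free-convolution-a-firststep}.
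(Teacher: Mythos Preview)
Your proposal follows essentially the same route as the paper: invoke Proposition~\ref{general-free-convolution-b-firststep} for existence of the limit $\mu_t^2$, transform the PDE (\ref{pde-g-b}) for $G$ into a first-order PDE for the R-transform, use Theorem~\ref{general-free-convolution-b-null} to identify the $\nu_0=0$ solution as $R_{(\mu_{sc,2\sqrt t}\boxplus\mu_{even})^2}$, verify directly that adding $R_{MP,\nu_0,t}(w)=\nu_0 t/(1-tw)$ produces a solution of the full PDE, and conclude by additivity and injectivity of the R-transform. Your derivation of the R-transform PDE $R_t=\nu_0+1+2wR+w^2R_w$ is correct (the paper's displayed equation (\ref{pde-r-b}) has the sign in front of $2zR$ flipped relative to the line just above it, but the argument is unaffected), and your remark on uniqueness via characteristics and the domain issue near $w=0$ makes explicit a point the paper leaves implicit.
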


\begin{proof}
We use (\ref{relation-g-r-2}),  (\ref{G-R-1}), and (\ref{G-R-2}) in the PDE (\ref{pde-g-b})
  and obtain
$$R_t(t,G(t,z)) =\nu_0 +2R(t,G(t,z))\cdot   G(t,z)+2-1+R_z(t,G(t,z)) 
\cdot G(t,z)^2.$$
For $z$ instead of  $G(t,z)$ we arrive  at the following PDE for the R-transform:
\begin{eqnarray}\label{pde-r-b}
R_t(t,z)&=& \nu_0+1-2 z R(t,z) + z^2 R_z(t,z)\\
R(0,z)&=& R_\mu(z). \nonumber
\end{eqnarray}
By Theorem \ref{general-free-convolution-b-null} we know that 
$R_{(\mu_{sc, 2\sqrt{t}}\boxplus \mu)^2}(z)$ solves (\ref{pde-r-b}) with $\nu_0=0$. 
This, (\ref{r-transform-mp}), and
 a straight forward calculation now show that $R_{\mu_{MP,\nu_0,t}}(z)+R_{(\mu_{sc, 2\sqrt{t}}\boxplus \mu_{even})^2}(z)$ 
solves the general PDE (\ref{pde-r-b}). This implies the desired result.
\end{proof}

We finally consider Theorem \ref{free-convolution-b}
 for the case with start in zero. As by a straight forward calculation $(\mu_{sc, 2\sqrt{t}})^2=\mu_{MP, 1,t}$, 
 we here obtain the measures $\sqrt{\mu_{MP, 1+\nu_0,t}}$ as weak limits  for the empirical measures $\mu_{N,t}$. 
This result and Lemma \ref{special-solution-B} on the zeroes
 $z_1^{(\nu(N)-1)}>\ldots>z_N^{(\nu(N)-1)}>0$  of $L_N^{(\nu(N)-1)}$ for $t=1/2$ lead to the following well known 
result (see e.g.~\cite{KM2} and references there):

\begin{corollary}
If $\lim_{N\to\infty} \nu(N)/N=\nu_0\geq 0,$
then the 
 empirical measures
$$ \frac{1}{N}(\delta_{z_1^{(\nu(N)-1)}/2 N}+\ldots +\delta_{z_N^{(\nu(N)-1)}/2N}) \quad\quad(t\ge0),$$
tend weakly to  $\mu_{MP, 1+\nu_0,1/2}$.
\end{corollary}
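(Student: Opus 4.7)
The plan is to derive the corollary directly from Theorem \ref{free-convolution-b} by choosing the trivial starting configuration $x_{N,n}\equiv 0$, together with Lemma \ref{special-solution-B}, which identifies the zeros of $L_N^{(\nu-1)}$ with the (squared) solution of (\ref{basic-ode-b}) starting at the origin.

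First, I would take $x_{N,n}=0$ for all $N,n$, so that $\mu_{N,0}=\delta_0$ converges weakly to $\mu=\delta_0$, and the moment condition (\ref{strong-carleman}) is satisfied trivially by $\mu^2=\delta_0$. Applying Lemma \ref{special-solution-B} with $c=0$ yields the explicit solution $\phi_N(t)=\sqrt{2t}\,(\sqrt{z_1^{(\nu(N)-1)}},\ldots,\sqrt{z_N^{(\nu(N)-1)}})$ of (\ref{basic-ode-b}) starting at $0$. Specializing to $t=1/2$ gives $\phi_{N,i}(1/2)=\sqrt{z_i^{(\nu(N)-1)}}$, so that
$$\mu_{N,1/2}=\frac{1}{N}\sum_{i=1}^N \delta_{\sqrt{z_i^{(\nu(N)-1)}/(2N)}}.$$
The push-forward of $\mu_{N,1/2}$ under $x\mapsto x^2$ is precisely the empirical measure appearing in the statement of the corollary.

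Next, Theorem \ref{free-convolution-b} applied with $\mu=\delta_0$ (so that $\mu_{even}=\delta_0$ and hence $\mu_{sc,2\sqrt t}\boxplus\mu_{even}=\mu_{sc,2\sqrt t}$) shows that $\mu_{N,t}$ converges weakly to $\sqrt{\mu_{MP,\nu_0,t}\boxplus(\mu_{sc,2\sqrt t})^2}$. By the continuity of $x\mapsto x^2$ on compacts, $(\mu_{N,t})^2$ converges weakly to $\mu_{MP,\nu_0,t}\boxplus(\mu_{sc,2\sqrt t})^2$. It remains to identify this limit with $\mu_{MP,1+\nu_0,t}$ at $t=1/2$.

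The only computational step is verifying $(\mu_{sc,2\sqrt t})^2=\mu_{MP,1,t}$: starting from the semicircle density $\frac{1}{2\pi t}\sqrt{4t-x^2}$ on $[-2\sqrt t,2\sqrt t]$ and pushing forward under $x\mapsto x^2$ produces the density $\frac{\sqrt{4t-y}}{2\pi t\sqrt y}$ on $[0,4t]$, which matches the Marchenko-Pastur density with parameter $c=1$ (noting $x_-=0$ and $x_+=4t$). With this identification in hand, the additivity relation (\ref{mp-product-formula}) gives $\mu_{MP,\nu_0,t}\boxplus\mu_{MP,1,t}=\mu_{MP,1+\nu_0,t}$, which completes the argument. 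I do not anticipate any serious obstacle: the only thing to check by hand is the density computation for $(\mu_{sc,2\sqrt t})^2$, and everything else is a direct assembly of previously established results.
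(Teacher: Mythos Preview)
Your proposal is correct and follows essentially the same route as the paper: apply Theorem \ref{free-convolution-b} with the trivial start $\mu=\delta_0$, invoke Lemma \ref{special-solution-B} at $t=1/2$ to identify the solution with the square roots of the Laguerre zeros, and use $(\mu_{sc,2\sqrt t})^2=\mu_{MP,1,t}$ together with (\ref{mp-product-formula}) to simplify the limit. The paper compresses these steps into the paragraph preceding the corollary, merely calling the identity $(\mu_{sc,2\sqrt t})^2=\mu_{MP,1,t}$ a ``straight forward calculation,'' whereas you spell out the density check explicitly.
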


 Theorem \ref{free-convolution-b} has
 also the following consequence:

\begin{corollary}
For all $s,t\ge0$ and $\nu_0\geq 0$,
$$\mu_{MP, \nu_0, s}\boxplus\Bigl(\mu_{sc, 2\sqrt{s}}\boxplus (\sqrt{ \mu_{MP, \nu_0+1, t}})_{even}\Bigr)^2=
\mu_{MP, \nu_0+1, s+t}.$$
\end{corollary}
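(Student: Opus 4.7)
My plan is to apply Theorem \ref{free-convolution-b} twice to the same family of ODE trajectories, exploiting the deterministic flow property of \eqref{basic-ode-b}. Concretely, I run the ODE from the initial condition $x_0=0$ and compare the empirical state at time $t+s$ (in one shot) with the result of first going from $0$ to time $t$ and then restarting \eqref{basic-ode-b} from the configuration $\phi_N(t)$ for an additional time $s$.

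First, fix $\nu_0\ge 0$ and for each $N\ge 2$ choose $\nu(N)$ with $\nu(N)/N\to\nu_0$. Let $\phi_N$ be the unique solution of \eqref{basic-ode-b} with parameter $\nu(N)$ and $\phi_N(0)=0\in C_N^B$. Combining Lemma \ref{special-solution-B} with the corollary immediately preceding the present statement, one finds
$$
\mu_{N,\tau}:=\frac{1}{N}\sum_{i=1}^N \delta_{\phi_{N,i}(\tau)/\sqrt{2N}}\ \xrightarrow{w}\ \sqrt{\mu_{MP,\nu_0+1,\tau}}
\qquad(\tau\ge 0),
$$
and specializing to $\tau=t+s$ gives one description of the limit at time $t+s$.

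Second, I use the flow property of the ODE: the curve $u\mapsto \phi_N(t+u)$ is itself a solution of \eqref{basic-ode-b} with starting configuration $\phi_N(t)\in C_N^B$. By the displayed convergence applied at time $t$, the starting empirical measures of this shifted family converge weakly to $\mu:=\sqrt{\mu_{MP,\nu_0+1,t}}$. Since $\mu$ is compactly supported in $[0,\infty[$ and $\mu^2=\mu_{MP,\nu_0+1,t}$ is likewise compactly supported, the moment condition \eqref{strong-carleman} on $\mu^2$ is trivially satisfied, so Theorem \ref{free-convolution-b} applies and yields
$$
\mu_{N,t+s}\ \xrightarrow{w}\ \sqrt{\mu_{MP,\nu_0,s}\boxplus\bigl(\mu_{sc,2\sqrt{s}}\boxplus\mu_{even}\bigr)^2}
=\sqrt{\mu_{MP,\nu_0,s}\boxplus\bigl(\mu_{sc,2\sqrt{s}}\boxplus(\sqrt{\mu_{MP,\nu_0+1,t}})_{even}\bigr)^2}.
$$

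Equating the two weak limits and pushing forward under the bijection $x\mapsto x^2$ on $[0,\infty[$ produces the asserted identity. The only point requiring care is verifying the hypotheses of Theorem \ref{free-convolution-b} at the intermediate time $t$, and this reduces to the (trivial) fact that Marchenko-Pastur laws have compact support; I do not anticipate any serious obstacle.
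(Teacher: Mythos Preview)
Your proposal is correct and follows essentially the same approach as the paper: both arguments apply Theorem~\ref{free-convolution-b} with start in $0$ (yielding $\sqrt{\mu_{MP,\nu_0+1,\tau}}$ as the limit at time $\tau$) and then invoke the semigroup/flow property of the ODE \eqref{basic-ode-b} to compare the description at time $t+s$ with the one obtained by restarting at time $t$ with initial measure $\sqrt{\mu_{MP,\nu_0+1,t}}$. The only minor imprecision is your reference to ``the corollary immediately preceding the present statement'': what you actually need is the observation in the paragraph before that corollary (namely that starting in zero gives limit $\sqrt{\mu_{MP,1+\nu_0,t}}$), or equivalently a direct application of Theorem~\ref{free-convolution-b} with $\mu=\delta_0$ together with $(\mu_{sc,2\sqrt{t}})^2=\mu_{MP,1,t}$ and \eqref{mp-product-formula}.
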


\begin{proof}
Theorem \ref{free-convolution-b}  with $\mu=\delta_0$
together with the semigroup property
of our solutions of the ODEs (\ref{basic-ode-b}) imply
\begin{align}
\mu_{MP, \nu_0+1, s+t}&= \mu_{MP, \nu_0, s+t}\boxplus\Bigl(\mu_{sc, 2\sqrt{s+t}}\boxplus \delta_0\Bigr)^2\notag\\
&=\mu_{MP, \nu_0, s}\boxplus\Bigl(\mu_{sc, 2\sqrt{s}}\boxplus (\sqrt{ \mu_{MP, \nu_0+1, t}})_{even}\Bigr)^2.
\notag
\end{align}
\end{proof}

This corollary, the additivity of the  R-transform, and  the known R-transforms of Marchenko-Pastur distributions
 lead immediately to the  R-transform 
\begin{align}
R_{\Bigl(\mu_{sc, 2\sqrt{s}}\boxplus (\sqrt{ \mu_{MP, \nu_0+1, t}})_{even}\Bigr)^2}(z)
&= \frac{(\nu_0+1)(s+t)}{1-(s+t)z}-\frac{\nu_0s }{1-sz}\\
&=R_{ \mu_{MP, 1, s+t}}(z) +\frac{\nu_0 t}{(1-(s+t)z)(1-sz)}.
\notag
\end{align}

\section{The Marchenko-Pastur law for Bessel processes of type B}

In this section we transfer the results  for the ODEs (\ref{basic-ode-b})
to a stochastic setting for the Bessel processes of type B.
For this we consider Bessel processes $( X_{t,k})_{t\ge0}$ on the Weyl chambers $C_N^B$ for the root systems $B_{N}$ with $N\geq 2$ and multiplicities
 $k=(k_1,k_2)=(\nu\cdot \beta, \beta)$, $\nu,\beta>0$. These processes
 satisfy the SDE 
\begin{equation}\label{SDE-B}
 dX_{t,k}^i = dB_t^i+ \beta\sum_{j\ne i} \frac{X_{t,k}^i}{(X_{t,k}^i)^2-(X_{t,k}^j)^2}dt+ \beta \frac{\nu}{X_{t,k}^i}dt \quad\quad(i=1,\ldots,N).
\end{equation}
with  an $N$-dimensional Brownian motion $(B_t^1,\ldots,B_t^N)_{t\ge0}$.
By \cite{GM} we know, similar to Bessel processes of type A, that for $\beta\geq 1/2$ and $\nu\beta\geq 1/2$,
the  process $( X_{t,k})_{t\ge0}$ does not hit the boundary of $C_N^B$ 
a.s.~for $t>0$ for arbitrary starting points in $C_N^B$.
 In the following we only consider this regular case $\beta\geq 1/2, \nu\beta\geq 1/2$.
 We now derive limit theorems for the moments of the associated empirical measures, where, as in the deterministic setting,
 we first consider the  squares of all coordinates of our processes, namely
$$\mu^2_{N,t}:= \frac{1}{N}(\delta_{(X_{t,k}^1)^2/ {2N}}+\ldots +\delta_{(X_{t,k}^N)^2/ {2N}})$$
for $t\ge0$. For this it will be convenient 
 also to study the  renormalized processes $(\tilde X_{t,k}:=X_{t,k}/\sqrt \beta)_{t\ge0}$ which satisfy the SDE
\begin{equation}\label{SDE-B-normalized}
d\tilde X_{t,k}^i = \frac{1}{\sqrt{\beta}}dB_t^i+ \sum_{j\ne i} \frac{\tilde X_{t,k}^i}{(\tilde X_{t,k}^i)^2-(\tilde X_{t,k}^j)^2}dt+ \frac{\nu}{\tilde X_{t,k}^i}dt \quad\quad(i=1,\ldots,N).
\end{equation}
which agrees, for $\beta=\infty$, with the ODE (\ref{basic-ode-b}). Again we consider the  renormalized  empirical measures of the squares
$$\tilde\mu^2_{N,t}:= \frac{1}{N}(\delta_{(\tilde X_{t,k}^1)^2/ {2N}}+\ldots +\delta_{(\tilde X_{t,k}^N)^2/ {2N}}).$$
Denote the $l$-th moment $(l\in\mathbb N_0$) of  $\tilde\mu_{N,t}$ by
$$S_{N,l}(t):= \int_{\mathbb R} y^l \> d\tilde\mu_{N,t}(y) = \frac{1}{2^lN^{l+1}}\sum_{i=1}^N (\tilde X_{t,k}^i)^{2l}.$$

Now we derive limit theorems for these moments as $N\to\infty$. As in the deterministic setting the limits depend on the asymptotic behaviour of $\nu$.

\begin{theorem}\label{semicircle-mp-B}
Consider the processes $(\tilde X_{t,k})_{t\geq 0}$  with $\beta\geq 1/2$, $\nu>0$ and 
with  starting sequences
$(x_N)_{N\ge1}:=(x_{N,n})_{1\le k\le N}\subset [0,\infty[$
   as before
 such that the limits
 $$c_l(0):=\lim_{N\to\infty} S_{N,l}(0)=\lim_{N\to\infty}  \frac{1}{2^lN^{l+1}}(x_{N,1}^{2l}+\ldots+x_{N,N}^{2l})<\infty$$
 exist for  $l\ge0$. 
Assume that  $\nu=\nu(N)$ depends on $N$  with $\nu_0:=\lim_{N\to\infty}\nu(N)/N\ge0$.
Then,
for $l\in \mathbb N_0$,
$$c_l(t):=\lim_{N\to\infty} S_{N,l}(t)$$
exists almost surely locally uniformly in $t\in[0,\infty[$ with $c_0(t)=1$ and 
\begin{equation}
c_l(t)= c_l(0)+l \nu_0 \int_0^t c_{l-1}(s) ds + l\int_0^t 
\sum_{k=0}^{l-1} c_{l-1-k}(s)c_{k}(s)\> ds \quad(l\ge 1).
\end{equation}
\end{theorem}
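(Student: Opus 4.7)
The plan is to parallel the argument used in Section 3 for type $A$, i.e.\ to upgrade the deterministic moment recursion of Lemma \ref{recurrence-sl-laguerre-det} to an almost sure statement by Itô calculus combined with Burkholder--Davis--Gundy and Borel--Cantelli. I proceed by induction on $l\ge 0$, the case $l=0$ being trivial.

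First I would apply Itô's formula to the function $x\mapsto\sum_i x_i^{2l}$ along the SDE (\ref{SDE-B-normalized}). Writing
\begin{align*}
\sum_{i=1}^N(\tilde X_{t,k}^i)^{2l} &= \sum_{i=1}^N x_{N,i}^{2l} + \frac{2l}{\sqrt{\beta}}\sum_{i=1}^N\int_0^t(\tilde X_{s,k}^i)^{2l-1}\,dB_s^i \\
&\quad + \int_0^t\Bigl(2l\sum_{i=1}^N\sum_{j\ne i}\frac{(\tilde X_{s,k}^i)^{2l}}{(\tilde X_{s,k}^i)^2-(\tilde X_{s,k}^j)^2}+2l\nu\sum_{i=1}^N(\tilde X_{s,k}^i)^{2l-2}\\
&\quad\qquad\qquad+\frac{l(2l-1)}{\beta}\sum_{i=1}^N(\tilde X_{s,k}^i)^{2l-2}\Bigr)ds,
\end{align*}
and using the symmetrization trick from the proof of Lemma \ref{recurrence-sl-laguerre-det} on the double sum, I would divide by $2^lN^{l+1}$. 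The $\beta$-dependent correction gives a term of order $O(1/N)$ in the empirical moments and hence vanishes in the limit. The remaining drift part reproduces exactly the recursion driving the deterministic $c_l(t)$ of Lemma \ref{recurrence-sl-laguerre-det}, once we use $\nu(N)/N\to\nu_0$.

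The main technical point is the martingale part. I would first justify that the stochastic integrals are genuine martingales (not merely local) by comparing $\sum_i(\tilde X_{s,k}^i)^2$, which by Itô satisfies a closed SDE of squared-Bessel type, with a one-dimensional squared Bessel process; standard moment bounds then bound all integrands uniformly on compact time intervals. Then I would apply Chebychev together with Burkholder--Davis--Gundy to estimate
\[
P\Bigl(\sup_{s\le T}\Bigl|\frac{1}{2^l N^{l+1}}\frac{2l}{\sqrt{\beta}}\sum_{i=1}^N\int_0^s(\tilde X_{u,k}^i)^{2l-1}dB_u^i\Bigr|>\varepsilon\Bigr)\le \frac{C l^2}{\varepsilon^2\beta\,4^l N^{2l+2}}E\int_0^T\sum_{i=1}^N(\tilde X_{s,k}^i)^{4l-2}ds,
\]
and by the analogue of Corollary \ref{convergence-in-expectation-a} (one first has to establish convergence in expectation of $S_{N,l}$ to $c_l(t)$, along the lines of Lemma \ref{general-limit-expectations-a}) the right-hand side is of order $O(1/N^2)$. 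Being summable in $N$, Borel--Cantelli yields that the martingale part tends to $0$ almost surely locally uniformly in $t$.

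Combining the three pieces — the initial empirical moment converging by hypothesis, the drift part converging by the inductive hypothesis together with the deterministic recursion (\ref{recurrence-laguerre}), and the martingale part tending to zero a.s.\ — closes the induction and produces the claimed $c_l(t)$ from Lemma \ref{recurrence-sl-laguerre-det}. I expect the hardest step to be the a priori bound showing the stochastic integrals are true martingales with the required moment control, since the drift contains the singular boundary terms $1/(\tilde X^i)$ and $\tilde X^i/((\tilde X^i)^2-(\tilde X^j)^2)$; the non-collision and no-hitting-zero guarantees from \cite{GM} for $\beta,\nu\beta\ge 1/2$ are precisely what make this routine rather than delicate.
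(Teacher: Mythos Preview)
Your proposal is correct and follows essentially the same route as the paper: apply It\^o's formula to $\sum_i(\tilde X_{t,k}^i)^{2l}$ (the paper's equation (\ref{SDE-Moments-B})), observe that the drift reproduces the recursion (\ref{moment-recurrence-lagu}) with $\nu$ replaced by $\nu+\tfrac{2l-1}{2\beta}$, and then upgrade to almost sure convergence via BDG and Borel--Cantelli exactly as in the type-$A$ proof of Theorem~\ref{semicircle-A}, using an expectation-convergence step analogous to Lemma~\ref{general-limit-expectations-a} and Corollary~\ref{convergence-in-expectation-a}. The paper in fact abbreviates precisely this argument and defers the detailed martingale estimates to Section~8; your write-up is if anything more explicit than the paper's own proof. (Minor point: compare your drift term with the ODE (\ref{basic-ode-b}) to check the factor of $2$ in the interaction sum.)
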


\begin{proof}
Using It\^{o}'s formula we obtain for $l\geq 1$
\begin{align}\label{SDE-Moments-B}
\sum_{i=1}^N(\tilde X_{t,k}^i)^{2l}&=\sum_{i=1}^N x_i^{2l} + \frac{2l}{\sqrt \beta} \sum_{i=1}^N\int_0^t(\tilde X_{s,k}^i)^{2l-1} dB_s^i\\
+\int_0^t&\left(2l \sum_{i=1}^N \sum_{j\neq i}
\frac{2(\tilde X_{s,k}^i)^{2l}}{(\tilde X_{s,k}^i)^2-(\tilde X_{s,k}^j)^2}+
\sum_{i=1}^N (2l\nu+\frac{l(2l-1)}{\beta})(\tilde X_{s,k}^i)^{2l-2}\right) ds\notag
\end{align}
Now we see that for the normalized drift term we obtain the recurrence relation (\ref{moment-recurrence-lagu}),
where $\nu$ is replaced by $\nu +\frac{2l-1}{2\beta}$. The desired results now follow along the same lines as
in the proofs of Lemma \ref{general-limit-expectations-a}, Corollary \ref{convergence-in-expectation-a}, and
Theorem \ref{semicircle-A} using the deterministic results of  (\ref{recurrence-sl-laguerre-det}),
as well as Theorem \ref{free-convolution-b}. We here skip the details, as the details are part of the more complicated setting in Section 8.
\end{proof}

The methods of  Section 3 and Theorem \ref{semicircle-mp-B} lead to the following  limit theorem.

\begin{theorem}\label{free-convolution-b-random}
Let $\mu\in M^1([0,\infty[)$  such that $\mu^2$ satisfies
 the moment condition (\ref{strong-carleman}).
Let
 $(x_{N,n})_{N\ge1, 1\le n\le N}\subset[0,\infty[$ 
with $(x_{N, 1},\ldots, x_{N, N})\in C_N^B$ such that the  empirical measures
\begin{equation}
\mu_{N,0}:= \frac{1}{N}(\delta_{x_{N, 1}/\sqrt{2N}}+\ldots \delta_{x_{N, N}/\sqrt{2N}})
\end{equation}
tend weakly to  $\mu$ for $N\to\infty$. Consider the normalized Bessel processes
$(\tilde{X}_{t,k})_{t\ge0}$ of type B with start in $(x_{N,1},\ldots,x_{N,N})\in C_N^B $ for $N\ge2$.
Then, for $t\ge0$  and
$$\lim_{N\to\infty} \frac{\nu(N)}{N}=\nu_0\geq 0,$$
 the  normalized
 empirical measures
 $$\mu_{N,t}:= \frac{1}{N}(\delta_{\frac{\tilde{X}^1_{t,k}}{\sqrt{2 N}}}+\ldots +\delta_{\frac{\tilde{X}^N_{t,k}}{\sqrt{2N}}}) $$
tend a.s.~weakly to  $\sqrt{\mu_{MP, \nu_0, t}\boxplus (\mu_{sc, 2\sqrt{t}}\boxplus \mu_{even})^2}$ for $N\to\infty$.
\end{theorem}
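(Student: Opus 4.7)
The plan is to combine the almost-sure moment convergence already established for Bessel processes of type B (Theorem \ref{semicircle-mp-B}) with the identification of the limiting moment sequence worked out in the deterministic/ODE setting (Theorem \ref{free-convolution-b}), and then to pass from moments to weak convergence via Carleman and the continuous mapping theorem applied to $x\mapsto\sqrt{x}$.

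First, under the hypothesis that $\mu_{N,0}\to\mu$ weakly and that $\mu^2$ satisfies (\ref{strong-carleman}), the moment convergence theorem yields
$$\lim_{N\to\infty} S_{N,l}(0)=\lim_{N\to\infty}\frac{1}{2^l N^{l+1}}\sum_{i=1}^N x_{N,i}^{2l}=\int_{[0,\infty[} y^{2l}\,d\mu(y)=:c_l(0)$$
for every $l\ge 0$. Theorem \ref{semicircle-mp-B} then guarantees that, almost surely and locally uniformly in $t$, $S_{N,l}(t)\to c_l(t)$ for all $l$, where the $c_l(t)$ satisfy the recurrence (\ref{recurrence-laguerre}). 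This is the heart of the argument, and it is obtained, exactly as in the proof of Theorem \ref{semicircle-A}, from It\^o's formula (\ref{SDE-Moments-B}) plus a Burkholder-Davis-Gundy and Borel-Cantelli argument showing that the martingale increment $\frac{2l}{\sqrt\beta}\sum_i\int_0^t(\tilde X_{s,k}^i)^{2l-1}\,dB_s^i$, once divided by $2^lN^{l+1}$, vanishes a.s.~as $N\to\infty$, while the additional second-order It\^o term $\sum_i\frac{l(2l-1)}{\beta}(\tilde X_{s,k}^i)^{2l-2}$ contributes only an $O(1/N)$ correction compared with the drift already present in the ODE case; the induction on $l$ then closes.

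Next, Theorem \ref{free-convolution-b} identifies the moment sequence $(c_l(t))_{l\ge 0}$ with the moments of
$$\nu_t:=\mu_{MP,\nu_0,t}\boxplus\bigl(\mu_{sc,2\sqrt{t}}\boxplus\mu_{even}\bigr)^{\!2},$$
since the recurrence (\ref{recurrence-laguerre}) depends only on $c_l(0)$, and Theorem \ref{free-convolution-b} shows that this recurrence, started from the moments of $\mu^2$, reproduces precisely the moments of $\nu_t$. Because $\mu^2$ satisfies (\ref{strong-carleman}), the same induction used in the proof of Proposition \ref{general-free-convolution-a-firststep} (with the trivial adjustment for the Laguerre recurrence) gives $|c_l(t)|\le(R(t)l)^l$, so $(c_l(t))_l$ satisfies Carleman's condition (\ref{carleman}) for every $t$, and the measure $\nu_t$ is uniquely determined by its moments. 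The moment convergence theorem therefore gives
$$\tilde\mu_{N,t}^{2}:=\frac{1}{N}\sum_{i=1}^N\delta_{(\tilde X_{t,k}^i)^2/(2N)}\;\longrightarrow\;\nu_t\quad\text{weakly a.s.}$$

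Finally, since $\tilde\mu_{N,t}$ is the push-forward of $\tilde\mu_{N,t}^{2}$ under the continuous map $x\mapsto\sqrt{x}$ on $[0,\infty[$, the continuous mapping theorem yields $\tilde\mu_{N,t}\to\sqrt{\nu_t}$ weakly a.s., which is the claim. The main obstacle is really the almost-sure moment convergence of Theorem \ref{semicircle-mp-B}, i.e.\ handling the diffusion and second-order It\^o contributions uniformly in $l$ so that Borel-Cantelli applies; once those are controlled, the rest is a clean combination of (\ref{strong-carleman}) for $\mu^2$, moment convergence, and the pushforward under $\sqrt{\cdot}$.
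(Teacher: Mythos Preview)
Your proposal is correct and follows essentially the same route as the paper: the paper's entire proof is the one-line remark that ``the methods of Section 3 and Theorem \ref{semicircle-mp-B} lead to'' the result, and your write-up is a faithful expansion of that sentence---a.s.\ moment convergence from Theorem \ref{semicircle-mp-B}, identification of the limiting moments via the ODE result Theorem \ref{free-convolution-b} (equivalently Proposition \ref{general-free-convolution-b-firststep} for the Carleman step), and the passage to $\mu_{N,t}$ via the pushforward under $x\mapsto\sqrt{x}$.
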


 \section{Dunkl processes and their frozen versions}

In the remainder of the paper we extend the results of Sections 4 and 5 for Bessel processes of type B and their frozen versions to
 Dunkl processes of type B.
 As already discussed in the introduction, this extension from Bessel to Dunkl processes does not lead to new limit
 results for the root systems of type A. Nevertheless, we recapitulate briefly some facts on general Dunkl processes
 for the convenience of the reader.
 Moreover, the frozen versions  do not appear in the literature.
 To keep the discussion concise we assume that the reader is familiar with  root systems, Weyl groups, and Weyl chambers
 for the general theory. On the other hand, we present all details for the root systems of type B,
 as we are interested mainly in this case.

 All results on Dunkl theory and Dunkl processes and the algebraic and analytic background can be found in \cite{An, CGY, R1, R2, RV1, RV2}
 and references there.

   Let $R\subset \mathbb R^N$ be a root system and $R_+\subset R$ a subsystem consisting of positive roots.
   We assume that all roots $\alpha\in R$ satisfy $\|\alpha\|_2^2=2$.
   Then, for all  $\alpha\in R$, the reflection $\sigma_\alpha$ on the hyperplane perpendicular to  $\alpha$ is given by
   $\sigma_\alpha(x)=x-(\alpha\cdot x)\alpha$ for $x\in\mathbb R^N$ with   the standard scalar product $\cdot$  on $\mathbb R^N$.
   Let $W\subset O(N)$ be the finite reflection group, or Weyl group, generated by the  reflections $\sigma_\alpha$, $\alpha\in R_+$.
The Weyl group $W$ acts on  $\mathbb R^N$ as usual, and we have $W(R)=R$.
We next fix some nonnegative multiplicity function $k:R\to[0,\infty[$ which is by definition invariant under the canonical action of $W$ on $R$.

    For given $R$ and $k$ we now define the Dunkl operators $T_i$ ($i=1,\ldots,N$) as the differential-difference operators
    $$T_if(x):= \frac{\partial f(x)}{\partial x_i} + \sum_{\alpha\in R_+} k(\alpha) \alpha_i \frac{f(x)-f(\sigma_\alpha(x))}{\alpha\cdot x}
      \quad(f\in C^2(\mathbb R^N)).$$
      It is well known by Dunkl (see e.g.~\cite{DX}) that the operators $T_i$ commute with $T_if\in C^1(\mathbb R^N)$ for $f\in C^2(\mathbb R^N)$.
      Moreover,  the $T_i$ lower the degree of homogeneous polynomials in $N$
      variables by one like the usual partial derivative operators, which appear as special cases for $k= 0$.

      We next introduce the Dunkl Laplacian $L_k:=\sum_{i=1}^N T_i^2$ as well as its renormalization
      $\cal L_k:=\frac{1}{2} L_k$, which fits better to the usual normalization of Brownian motions in probability.
      $\cal L_k$ is given explicitly by
      \begin{equation}\label{Dunkl-Laplacian-general}
        \cal L_k f(x)= \frac{1}{2}\Delta f(x)+\sum_{\alpha\in R_+} k(\alpha)\Bigl(
        \frac{\nabla f(x)\cdot\alpha}{\alpha\cdot x}+ 
          \frac{f(\sigma_\alpha(x))-f(x)}{(\alpha\cdot x)^2}\Bigr)
      \end{equation}
      for $f\in C_c^2(\mathbb R^N)$ with the usual Laplacian $\Delta$ on $\mathbb R^N$; see e.g.~\cite{CGY, RV2}.
      The operator  $\cal L_k$ is the generator of some Feller
      semigroup on $\mathbb R^N$ whose transition densities can be written down
      explicitly in terms of so-called Dunkl kernels; see \cite{R1, RV1} for details.  Associated Feller processes $(X_{t,k})_{t\ge0}$
      on $\mathbb R^N$ with c\`adl\`ag
      paths are called Dunkl processes associated with the root system $R$ with multiplicity $k$.

 Let us briefly discuss the corresponding Bessel processes. For this we fix some closed Weyl chamber associated with $R$.
 This chamber may be regarded as the space $\mathbb R^N/W$ of all $W$-orbits in $\mathbb R^N$, i.e.,
 for each $x\in\mathbb R^N$ there is some unique $\pi(x)\in C$ for some $\pi\in W$.
 We thus have some canonical projection $\pi:\mathbb R^N\to \mathbb R^N/W \simeq C$, and we can consider the projected processes
 $$(X_{t,k}^W:=\pi(X_{t,k}))_{t\ge0}$$
 on $C$. By symmetry arguments, these processes $(X_{t,k}^W)_{t\ge0}$ are
 time-homogeneous diffusion processes on $C$ with reflecting boundaries and with the generators
 \begin{equation}\label{generator-bessel-general}
   \cal L_k^W f(x)= \frac{1}{2}\Delta f(x)+\sum_{\alpha\in R_+} k(\alpha)
        \frac{\nabla f(x)\cdot\alpha}{\alpha\cdot x}
 \end{equation}
 for functions  $f\in C_c^2(\mathbb R^N)$ which are $W$-invariant. Notice that this restriction of the domain of $\cal L_k^W $
fits to the  reflecting boundaries.
 These diffusions on $C$
 are called Bessel processes associated with the root system $R$  with multiplicity $k$.

 We now discuss  examples which are connected with the preceding sections:

 \begin{examples}
\begin{enumerate}
\item[\rm{(1)}] For $N\ge2$, the root system of type $A_{N-1}$ is given by
  $$R= \{e_i-e_j: \quad i,j=1,\ldots, N, \> i\ne j\}$$
  where $e_i\in \mathbb R^N$ is the $i$-th unit vector. Here, $W$ is the symmetric group acting on $\mathbb R^N$ as usual,
  the multiplicity $k$ is just a constant, the Weyl chamber $C$ may be chosen as $C_N^A$ from the introduction,
  and the generator in (\ref{generator-bessel-general}) is just the generator of the Bessel process of
  type  $A_{N-1}$ as in (\ref{def-L-A-intro}).
\item[\rm{(2)}] For $N\ge1$, the root system of type $B_{N}$ is given by
  $$R= \{\pm(e_i \pm e_j): \quad1\le i<j\le N\}\cup\{ \pm\sqrt 2 \cdot e_i: \quad i=1,\ldots,N\}.$$
 Here, $W$ is the hyperoctahedral group $\mathbb Z_2^N \ltimes S_N$, 
 the multiplicity $k$ consists of 2 constants, the Weyl chamber $C$ may be chosen as $C_N^B$ from the introduction,
 and the generator in (\ref{generator-bessel-general})
 is just the generator of the Bessel process of type  $B_N$ introduced
 in (\ref{def-L-B}).
  \end{enumerate}
 \end{examples}

   We now consider the freezing of Dunkl processes similar to that of Bessel processes in the preceding sections.
   For this we fix some root system $R$ and write the multiplicity function $k:R\to[0,\infty[$ as
       $k=\beta\cdot k_0$ with some fixed multipicity function $k_0$ and a varying constant $\beta>0$.
       For an Dunkl process $(X_{t,k})_{t\ge0}$ on $\mathbb R^N$ associated with $R$ and $k$, we now also study
       its renormalized version $(\tilde X_{t,k}:= \frac{1}{\sqrt\beta}X_{t,k})_{t\ge0}$ which then has the generator
  \begin{equation}\label{Dunkl-Laplacian-general-renormalized}
        \tilde{\cal L}_{k_0,\beta} f(x)= \frac{1}{2\beta}\Delta f(x)+\sum_{\alpha\in R_+} k_0(\alpha)\Bigl(
        \frac{\nabla f(x)\cdot\alpha}{\alpha\cdot x}+ 
          \frac{f(\sigma_\alpha(x))-f(x)}{(\alpha\cdot x)^2}\Bigr)
  \end{equation}
  for $f\in C_c^2(\mathbb R^N)$.  Clearly, this is also the generator of some Feller semigroup for $\beta=\infty$.
  Associated Feller processes then will be called frozen Dunkl processes associated with $R$ and $k_0$.
  These processes are pure jump processes.

 \begin{example}
   In the next section we study  frozen Dunkl processes for  the root system  $B_{N}$.
   Here we choose the fixed multiplicity $k_0$, which consists of 2 parameters, as $k_0=(1,\nu)$ with $\nu\ge0$.
   In this case, we denote the frozen  Dunkl processes on $\mathbb R^N$ by $(X_{t,\nu})_{t\ge0}$.
   The associated generator $\tilde{\cal L}_{k_0,\infty}$   according to (\ref{Dunkl-Laplacian-general-renormalized})
   will now be denoted as $L_\nu$ for simplicity. We then have
\begin{align}\label{generator-frozen-b}
L_\nu u(x)&= \sum_{i=1}^N \Bigl(\sum_{j:\> j\ne i} \frac{2x_i}{x_i^2-x_j^2}+\frac{\nu}{x_i}\Bigr) u_{x_i}(x)
+\frac{\nu}{2}\sum_{i=1}^N \frac{u(\sigma_ix)-u(x)}{x_i^2}\notag\\
&+\frac{1}{2} \sum_{i,j: \> j\ne i} \Bigl(\frac{u(\sigma_{i,j}x)-u(x)}{(x_i-x_j)^2}+\frac{u(\sigma_{i,j}^-x)-u(x)}{(x_i+x_j)^2}\Bigr)
\end{align}
  for $u\in \cal C^2(\mathbb R^N)$ with the reflections $\sigma_i, \sigma_{i,j}, \sigma_{i,j}^-$ $(i\ne j$) on  $\mathbb R^N$
  where $\sigma_i$ changes the sign of the $i$-th coordinate, $\sigma_{i,j}$ exchanges the coordinates $i,j$, and
  $\sigma_{i,j}^-$ exchanges the coordinates $i,j$ and changes the signs of these coordinates in addition.

  The Dunkl Laplacians $\tilde{\cal L}_{k_0,\beta}$ from (\ref{Dunkl-Laplacian-general-renormalized})
  for  the root system  $B_{N}$ with  $k_0=(1,\nu)$ will be denoted  by $\tilde{\cal L}_{\nu,\beta}$ in the next sections.
\end{example}

 \section{Empirical limit distributions for frozen Dunkl processes of type B}

 In this section we study the empirical distributions of
 frozen Dunkl processes $(X_{t,\nu})_{t\ge0}$ of type $B$ on $\mathbb R^N$ 
 with the generators $L_\nu$ defined in (\ref{generator-frozen-b}).
We assume that these processes start in deterministic points
in  $\mathbb R^N$.  
Moreover, we denote the components of $X_{t,\nu}$ by 
$X_{j,t,\nu}$  for $j=1,\ldots,N$.

Similar to Sections 2 and 4 we study
the  (random) normalized
 empirical measures
\begin{equation}\label{emp-measure-basic-b-dunkl}
\mu_{N,t,\nu}:= \frac{1}{N}(\delta_{X_{1,t,\nu}/\sqrt N}+\ldots +\delta_{X_{N,t,\nu}/\sqrt N})\in M^1(\mathbb R).
\end{equation}
of the processes  $(X_{t,\nu})_{t\ge0}$ of dimension $N$ for $N\to\infty$.
For this we study their moments
\begin{equation}\label{def-moments-case-b-dunkl}
  S_{N,l,\nu}(t):= \frac{1}{N^{l/2+1}}(X_{1,t,\nu}^l+\ldots+X_{N,t,\nu}^l)   \quad\quad (l\ge0).\end{equation}
By the very construction of the processes $(X_{t,\nu})_{t\ge0}$, the even moments
$ S_{N,2l,\nu}(t)$  are deterministic (and closely related to corresponding moments of the corresponding frozen Bessel processes
of type B in Section 4), while this is not the case for the odd moments $ S_{N,2l+1,\nu}(t)$ ($l\ge0$).
With the  functions 
$$u_l(x):= x_1^l+\ldots+ x_N^l \quad\quad(l\ge0)$$
we  have
\begin{equation}\label{rel-s-u-dunkl}
    S_{N,l,\nu}(t)= \frac{1}{N^{1+l/2}}\cdot u_l(X_{t,\nu})\quad\quad(l\ge0).
   \end{equation} 

We now compute  $L_\nu u_l$ for $l\ge0$. For this we observe that all $u_l$  are invariant under permutations of coordinates.
Therefore, for all $u:=u_l$
  \begin{align}\label{generator-frozen-b-perminvariant}
L_\nu u(x)&= \sum_{i=1}^N \Bigl(\sum_{j:\> j\ne i} \frac{2x_i}{x_i^2-x_j^2}+\frac{\nu}{x_i}\Bigr) u_{x_i}(x)
+\frac{\nu}{2}\sum_{i=1}^N \frac{u(\sigma_ix)-u(x)}{x_i^2}\notag\\
&+\frac{1}{2} \sum_{i,j: \> j\ne i} \frac{u(\sigma_{i,j}^-x)-u(x)}{(x_i+x_j)^2}.
\end{align}
  Moreover,   Dynkin's formula for Markov processes (see e.g. Section III.10 of \cite{RW}) shows that the processes
  \begin{equation}\label{dynkin-formula}
    \Bigl(u_l(X_{t,\nu})-u_l(X_{0,\nu})- \int_0^t (L_\nu u_l)(X_{s,\nu})\> ds\Bigr)_{t\ge 0}\end{equation}
  are martingales. Hence, for all $l\ge0$,
 \begin{equation}\label{dynkin-formula2}
   \frac{d}{dt} E(u_l(X_{t,\nu}))=E((L_\nu u_l)(X_{t,\nu})).\end{equation}
 
 We now compute $L_\nu u_l$ for $l\ge0$. We first notice that $u_0=N$ and thus $L_\nu u_0=0$.
 Moreover, a simple computation  yields that $L_\nu u_1=0$. By Dynkin's formula
 (\ref{dynkin-formula}) this corresponds to the well-known fact that the process  $(u_1(X_{t,\nu}))_{t\ge0}$ is a martingale; see e.g.~\cite{RV1}.
 We next turn to the general odd case $u_{2l+1}$ for $l\ge1$.  By (\ref{generator-frozen-b-perminvariant}) we have
\begin{align}
L_\nu u_{2l+1}(x)&= \sum_{i=1}^N \Bigl(\sum_{j:\> j\ne i} \frac{2x_i^{2l+1}}{x_i^2-x_j^2}+\nu x_i^{2l-1}\Bigr) (2l+1)\notag\\
&\quad -\sum_{i,j: \> j\ne i}\frac{x_i^{2l+1}+x_j^{2l+1}}{(x_i+x_j)^2} - \nu\sum_{i=1}^N x_i^{2l-1}\notag\\
&= (2l+1) \sum_{i,j: \> j\ne i}  \frac{x_i^{2l+1}- x_j^{2l+1}}{x_i^2-x_j^2}-
\sum_{i,j: \> j\ne i}\sum_{h=0}^{2l}(-1)^h \frac{x_i^h x_j^{2l-h}}{x_i+x_j}
  \notag\\
  &\quad + (2l+1)\nu u_{2l-1}(x) - \nu u_{2l-1}(x)\notag
  \end{align}
and thus 
    \begin{align}\label{ungerade-L}
L_\nu u_{2l+1}(x)&= 
   (2l+1)\sum_{i,j: \> j\ne i}\sum_{h=0}^{2l} \frac{x_i^h x_j^{2l-h}}{x_i+x_j}\notag\\
&\quad - \sum_{i,j: \> j\ne i}\sum_{h=0}^{2l}(-1)^h \frac{x_i^h x_j^{2l-h}}{x_i+x_j} +2l\nu u_{2l-1}(x)\notag\\
 &= \sum_{i,j: \> j\ne i}\frac{1}{x_i+x_j}\Bigl( 2l \sum_{h=0}^{l}x_i^{2h} x_j^{2l-2h}+2(l+1)\sum_{h=0}^{l-1}x_i^{2h+1} x_j^{2l-2h-1}\Bigr)
\notag\\ &\quad +2l\nu u_{2l-1}(x)\notag\\
&=\sum_{i,j: \> j\ne i}\Bigl( 2l x_j^{2l-1}+ 2 x_j^{2l-2}x_i + (2l -2) x_j^{2l-3}x_i^2 + 4  x_j^{2l-4}x_i^3 +\ldots\notag\\
&\quad\quad\ldots+ 2 x_jx_i^{2l-2} +   2l x_i^{2l-1}\Bigr) +2l\nu u_{2l-1}(x)\notag\\
&= 2\Bigl(\sum_{i,j: \> j\ne i}\sum_{h=0}^{l-1} x_i^{2h}x_j^{2l-1-2h}(l-h)+ \sum_{i,j: \> j\ne i}\sum_{h=0}^{l-1} x_j^{2h}x_i^{2l-1-2h}(l-h)\Bigr)\notag\\
&\quad +2l\nu u_{2l-1}(x)\notag\\
&= 4 \sum_{i,j: \> j\ne i}\sum_{h=0}^{l-1}(l-h)x_i^{2h}x_j^{2l-1-2h} +2l\nu u_{2l-1}(x).
\end{align}
    As
    $\sum_{i,j: \> j\ne i}x_i^{2h}x_j^{2l-1-2h}= u_{2h}(x)u_{2l-1-2h}(x)-u_{2l-1}(x)$,
 $\sum_{h=0}^{l-1}(l-h)=l(l+1)/2$ and $u_0=N$,  we get
\begin{align}\label{ungerade-L2}
L_\nu u_{2l+1}&=2l\nu u_{2l-1} + 4 \sum_{h=0}^{l-1}(l-h)u_{2h}u_{2l-1-2h} - 4 \sum_{h=0}^{l-1}(l-h)u_{2l-1} \notag\\
&=(2l\nu +4lN -2l(l+1))u_{2l-1} + 4 \sum_{h=1}^{l-1}(l-h)u_{2h} u_{2l-1-2h}\notag\\
&=2l(2N+\nu-(l+1))u_{2l-1} + 4 \sum_{h=1}^{l-1}(l-h)u_{2h} u_{2l-1-2h} .
\end{align}
Moreover, by a similar computation we obtain in the even case for $l\ge1$ that
\begin{equation}\label{gerade-L2}
L_\nu u_{2l}=2l(\nu+l+1)u_{2l-2}+2l \sum_{h=0}^{l-1}u_{2h} u_{2l-2-2h}.
\end{equation}
If we combine (\ref{dynkin-formula2}) with (\ref{ungerade-L2}) and use  that $u_{2h}(X_{t,\nu})$ is deterministic,
we see that
\begin{align} \frac{d}{dt}E( u_{2l+1}(X_{t,\nu}))&= E((L_\nu u_{2l+1})(X_{t,\nu}))\notag\\
  &= 2l(2N+\nu-(l+1))E( u_{2l-1}(X_{t,\nu}))\notag\\
  &\quad\quad +4 \sum_{h=1}^{l-1}(l-h)u_{2h}(X_{t,\nu}) \cdot E( u_{2l-1-2h}(X_{t,\nu})).
\notag\end{align}
Therefore, by (\ref{rel-s-u-dunkl}),
\begin{align}\label{ungerade-L3}
  \frac{d}{dt}E( S_{N,2l+1,\nu}(t))&= \frac{2l(2N+\nu-(l+1))}{N} E( S_{N,2l-1,\nu}(t))\\
 &\quad\quad+ 4\sum_{h=1}^{l-1}(l-h) S_{N,2h,\nu}(t) E( S_{N,2l-1-2h,\nu}(t)).
\notag\end{align}
Moreover, in the even case we obtain in a similar way from  (\ref{gerade-L2}) that
\begin{equation}\label{gerade-L3}
  \frac{d}{dt} S_{N,2l,\nu}(t)= \frac{2l(\nu+l+1)}{N}S_{N,2l-2,\nu}(t) + 2l \sum_{h=0}^{l-1}S_{N,2l-2-2h,\nu}(t)S_{N,2h,\nu}(t).
\end{equation}
These two recurrence relation and the methods of the proofs of Lemmas \ref{recurrence-sl-hermite-det}
and \ref{recurrence-sl-laguerre-det} now readily lead to the following limit result.

\begin{lemma}\label{recurrence-sl-dunkl-det}
  Let $(x_{N,k})_{1\le k\le N}\subset \mathbb R$ be the starting sequences of the frozen Dunkl processes $(X_{t,\nu})_{t\ge0}$ for
  $N\in\mathbb N$ with
$X_{0,\nu}=(x_{N,1},\ldots,x_{N,N})$
   such that 
   $$c_l(0):=\lim_{N\to\infty} S_{N,l,\nu}(0)=\lim_{n\to\infty}  \frac{1}{N^{l/2+1}}(x_{N,1}^l+\ldots+x_{N,N}^l)<\infty$$
exists for all $l\in \mathbb N_0$. Assume that $\nu=\nu(N)$ depends on $N$ with
$\lim_{N\to\infty} \frac{\nu(N)}{N}=\nu_0$ with some constant $\nu_0 \geq 0$.
Then
for $l\in \mathbb N_0$,
$$c_l(t):=\lim_{N\to\infty} E(S_{N,l,\nu(N)}(t))$$
exists locally uniformly in $t\in[0,\infty[$ and satisfies the recurrence relations
     $c_0(t)=1$, $ c_1(t)=c_1(0)$, and for $l\ge1$,
\begin{equation}\label{recurrence-dunkl-gerade}
c_{2l}(t)=c_{2l}(0)+ 2l \int_0^t\Bigl(\nu_0 c_{2l-2}(s) +\sum_{h=0}^{l-1} c_{2h}(s)c_{2l-2h-2}(s)\Bigr)\> ds,
\end{equation}
\begin{equation}\label{recurrence-dunkl-ungerade}
  c_{2l+1}(t)=c_{2l+1}(0)+ \int_0^t\Bigl(2l\nu_0 c_{2l-1}(s)+4 \sum_{h=0}^{l-1}(l-h) c_{2h}(s)c_{2l-2h-1}(s)\Bigr)\> ds.
\end{equation}
\end{lemma}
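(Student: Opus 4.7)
I would prove Lemma \ref{recurrence-sl-dunkl-det} by induction on $l$, following the template established in Lemmas \ref{recurrence-sl-hermite-det} and \ref{recurrence-sl-laguerre-det}, but now feeding in the two recursions (\ref{gerade-L3}) and (\ref{ungerade-L3}) that have already been derived above by Dynkin's formula. The base cases are immediate: $S_{N,0,\nu}(t)=1$ gives $c_0(t)=1$, and since $L_\nu u_1=0$ the process $(u_1(X_{t,\nu}))_{t\ge 0}$ is a martingale, so $E(S_{N,1,\nu}(t))=S_{N,1,\nu}(0)\to c_1(0)$.

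For the even inductive step, I would integrate (\ref{gerade-L3}) against $t$ and use that the even moments are deterministic. This yields
\begin{equation*}
S_{N,2l,\nu(N)}(t)=S_{N,2l,\nu(N)}(0)+\int_0^t\Bigl(\tfrac{2l(\nu(N)+l+1)}{N}S_{N,2l-2,\nu(N)}(s)+2l\sum_{h=0}^{l-1}S_{N,2h,\nu(N)}(s)\,S_{N,2l-2-2h,\nu(N)}(s)\Bigr)ds.
\end{equation*}
Since $\nu(N)/N\to\nu_0$ gives $\tfrac{2l(\nu(N)+l+1)}{N}\to 2l\nu_0$, and by the inductive hypothesis all lower-order even moments converge locally uniformly to polynomials in $s$, dominated convergence on the compact interval $[0,t]$ delivers (\ref{recurrence-dunkl-gerade}), together with the local uniformity of the convergence and the polynomial nature of $c_{2l}$.

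For the odd inductive step I would take expectations in (\ref{ungerade-L3}), integrate over $[0,t]$, and pass to the limit. The key observation is bookkeeping: the prefactor of $E(S_{N,2l-1,\nu(N)}(s))$ satisfies $\tfrac{2l(2N+\nu(N)-(l+1))}{N}\to 4l+2l\nu_0$, and the extra $4l$ arising from the leading $2N/N=2$ is exactly the $h=0$ contribution $4(l-0)\,c_0(s)\,c_{2l-1}(s)=4l\,c_{2l-1}(s)$ that is missing from the truncated convolution sum $\sum_{h=1}^{l-1}$ in (\ref{ungerade-L3}). Merging the two produces the full sum $\sum_{h=0}^{l-1}$ appearing in (\ref{recurrence-dunkl-ungerade}); the remaining $2l\nu_0\,c_{2l-1}$ is the genuine $\nu_0$-contribution. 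By induction, the even moments $S_{N,2h,\nu(N)}(s)$ ($h<l$) converge deterministically and locally uniformly, and the expectations $E(S_{N,2l-1-2h,\nu(N)}(s))$ converge locally uniformly, so a dominated convergence argument closes the induction.

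The only non-routine point—and the reason I single it out as the ``main obstacle''—is precisely this coefficient matching in the odd case: without it one would read off a coefficient $4l+2l\nu_0$ for $c_{2l-1}(s)$ and separately a sum starting at $h=1$, and would fail to recognise that these combine to give the clean form stated in (\ref{recurrence-dunkl-ungerade}). Local uniformity in $t$ and the fact that every $c_l(t)$ is a polynomial in $t$ (so that the integrands needed for dominated convergence are automatically uniformly bounded on compact time intervals) follow by a parallel induction from the two recurrences, exactly as in Lemma \ref{recurrence-sl-hermite-det}.
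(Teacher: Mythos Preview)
Your proposal is correct and follows precisely the approach the paper intends: the paper's ``proof'' is literally the sentence preceding the lemma, which says that (\ref{gerade-L3}) and (\ref{ungerade-L3}) together with the induction template of Lemmas \ref{recurrence-sl-hermite-det} and \ref{recurrence-sl-laguerre-det} readily yield the result. Your write-up in fact spells out more than the paper does---in particular the coefficient bookkeeping $\tfrac{2l(2N+\nu(N)-(l+1))}{N}\to 4l+2l\nu_0$ and the absorption of the $4l$ into the $h=0$ term of the convolution sum---which is exactly the point that needs to be checked; one cosmetic remark is that (\ref{ungerade-L3}) is already an identity for expectations, so your phrase ``take expectations in (\ref{ungerade-L3})'' is redundant and you can pass directly to integrating in $t$.
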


Please notice that the even limit moments $c_{2l}(t)$ in (\ref{recurrence-dunkl-gerade})
have the same recurrence relation as the moments $c_l(t)$ in (\ref{recurrence-laguerre}) in the context of Bessel processes of type B up to an factor $2$,
This factor 2 is caused by by the slightly different scalings of the emirical measures in (\ref{emp-measure-basic-b-dunkl}) and
(\ref{empirical-gen-lagu}).

We now proceed as in Sections 2 and 4 and obtain that under some mild conditions on the initial data, the $c_{l}(t)$ ($l\ge0$)
in the lemma are the moments of  unique probability measures $\mu_t$ for $t\ge0$.
 For this we fix some  probability measure
 $\mu\in M^1(\mathbb R)$ which is 
  determined uniquely by its moments $c_l$ ($l\ge0$).
We now choose 
 $(x_{N,n})_{N\ge1, 1\le n\le N}\subset\mathbb R$  
with $x_{N, n-1}\ge x_{N, n}$ for  $2\le n\le N$ such that the  empirical measures
$$\mu_{N,0}:= \frac{1}{N}(\delta_{x_{N, 1}/\sqrt N}+\ldots \delta_{x_{N, N}/\sqrt N})$$
tend weakly to  $\mu$ for $N\to\infty$, i.e., by the moment convergence theorem, that
$$\lim_{N\to\infty} S_{N,l,\nu}(0):=\lim_{N\to\infty}  \frac{1}{N^{l/2+1}}(x_{N,1}^l+\ldots+x_{N,N}^l)=c_l \quad(l\ge0).$$
For  $N\ge 2$ we now consider the measures $\mu_{N,t,\nu}$ from (\ref{emp-measure-basic-b-dunkl}).
The proof of the following result is now completely analogous to that of 
Propositions \ref{general-free-convolution-a-firststep} or \ref{general-free-convolution-b-firststep}:

\begin{proposition}\label{general-free-convolution-dunkl-firststep}
  Let $\mu\in M^1(\mathbb R)$ be a probability measure for which the moment condition (\ref{strong-carleman}) holds. Assume
 that with the preceding notations $\lim_{N\to\infty} \frac{\nu(N)}{N}=\nu_0\geq 0$ exists.
 Then for $t\ge0$ there are unique probability
 measures $\mu_t\in M^1(\mathbb R)$  with the moments $(c_l(t))_{l\ge0}$. Moreover, for all $t\ge0$,
 the moment condition (\ref{strong-carleman}) holds for $\mu_t$, and  $\mu_t$ is the weak limit of the  empirical measures
 $\mu_{N,t,\nu(N)}$ for $N\to\infty$.
\end{proposition}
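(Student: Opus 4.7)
The plan is to mimic the structure of the proofs of Propositions \ref{general-free-convolution-a-firststep} and \ref{general-free-convolution-b-firststep}, since the recurrences (\ref{recurrence-dunkl-gerade}) and (\ref{recurrence-dunkl-ungerade}) have precisely the same bilinear shape as those treated there. Lemma \ref{recurrence-sl-dunkl-det} already supplies the existence of the limits $c_l(t):=\lim_{N\to\infty} E(S_{N,l,\nu(N)}(t))$ locally uniformly in $t$; so the three remaining claims are (i) that each sequence $(c_l(t))_{l\ge 0}$ satisfies the Carleman-type bound (\ref{strong-carleman}), (ii) that it is therefore the moment sequence of a unique probability measure $\mu_t$, and (iii) that $\mu_{N,t,\nu(N)}$ converges weakly to $\mu_t$.

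For (i) I would fix $t>0$ and show by induction on $l$ that there exists $R=R(t)>1$, depending on $c$, $t$, and $\nu_0$, such that $|c_l(s)|\le (Rl)^l$ for all $s\in[0,t]$ and all $l\ge 0$. The cases $l=0,1$ are immediate from $c_0\equiv 1$ and $c_1(t)=c_1(0)$. For the inductive step I treat the even and odd indices in parallel. For $l=2m\ge 2$, inserting the induction hypothesis into (\ref{recurrence-dunkl-gerade}) bounds $|c_{2m}(\tau)|$ by $(c\cdot 2m)^{2m} + 2m\cdot t\cdot(\nu_0+1)\cdot m \cdot R^{2m-2}(2m)^{2m-2}$, and the analogous estimate, with coefficients from (\ref{recurrence-dunkl-ungerade}), works for odd $l=2m+1$. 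Choosing $R$ large enough in terms of $c, t, \nu_0$ absorbs all of these constants and closes the induction, exactly as in the estimate (\ref{est-c-l}).

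Step (ii) is then automatic: the bound $|c_l(t)|\le (Rl)^l$ implies the Carleman condition (\ref{carleman}), hence the moment problem for $(c_l(t))_l$ is determinate and produces a unique $\mu_t\in M^1(\mathbb R)$ whose moments inherit (\ref{strong-carleman}). For (iii) I would then apply the moment convergence theorem. The even moments $S_{N,2l,\nu(N)}(t)$ are deterministic (the frozen dynamics only permute coordinates or flip signs, which leave every even monomial invariant), so they converge to $c_{2l}(t)$ pointwise; the odd moments are random, but Lemma \ref{recurrence-sl-dunkl-det} gives $E(S_{N,2l+1,\nu(N)}(t))\to c_{2l+1}(t)$, so that the mean empirical measure $E(\mu_{N,t,\nu(N)})$ has all moments converging to those of $\mu_t$. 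Combined with the uniqueness from (ii), the moment convergence theorem then delivers the weak convergence asserted in the proposition.

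The main technical obstacle is purely bookkeeping in step (i): one must keep the even and odd recurrences coupled, because the right-hand side of (\ref{recurrence-dunkl-ungerade}) mixes even factors $c_{2h}$ with odd factors $c_{2l-2h-1}$, so a clean induction requires the bound $|c_l|\le (Rl)^l$ to be established jointly for all parities up to a common level $l$ before moving to level $l+1$. Once this joint induction is set up, the rest of the argument is a straightforward transcription of the corresponding passages in Sections 2 and 4 and requires no new analytic input. The upgrade to almost-sure convergence of the random empirical measures is not part of this proposition and will be handled in Section 8 via variance control.
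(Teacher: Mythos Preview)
Your proposal is correct and follows exactly the route the paper itself indicates: the paper gives no separate proof but simply states that the argument is ``completely analogous to that of Propositions~\ref{general-free-convolution-a-firststep} or~\ref{general-free-convolution-b-firststep}.'' Your joint induction on~$l$ covering both parities, using the bilinear recurrences (\ref{recurrence-dunkl-gerade})--(\ref{recurrence-dunkl-ungerade}) to propagate the bound $|c_l(s)|\le (Rl)^l$, is precisely the transcription the paper has in mind, and your observation that the odd recurrence couples even and odd factors (forcing the induction to run over all~$l$ rather than separately) is the only new bookkeeping point; your reading that the weak-convergence claim here concerns the mean empirical measures, with the almost-sure upgrade postponed to Section~8, is also the intended interpretation.
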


In the situation of Proposition \ref{general-free-convolution-dunkl-firststep} we now derive PDEs 
for the  Stieltjes transforms
$$G(t,z):=G_{\mu_t}(z), \quad G^N(t,z):=G_{\mu_{N,t, \nu(N)}}(z) \quad\quad (t\ge0, \> z\in\mathbb C, \Im z\ne0)$$
of the  measures $\mu_t$ and $\mu_{N,t, \nu(N)}$. As the recurrence relations in Lemma \ref{recurrence-sl-dunkl-det}
are different for even and odd moments, we  decompose  $\mu_t$ and  $\mu_{N,t,\nu(N)}$ 
into their even and odd parts and study the  Stieltjes transforms of these measures.
We thus define the reflected probability measures $\mu_{t}^*\in M^1(\mathbb R)$ with $\mu_{t}^*(A)=\mu_{t}(-A)$ for Borel sets $A\subset\mathbb R$ and
$$\mu_{t,even}:= \frac{1}{2}(\mu_{t}+\mu_{t}^*), \quad \mu_{t,odd}:= \frac{1}{2}(\mu_{t}-\mu_{t}^*).$$
Please notice that $\mu_{t,odd}$ usually is a signed measure, and that $\mu_{t}=\mu_{t,even}+\mu_{t,odd}$.
We now introduce the Stieltjes transforms
$$G^{even}(t,z):=G_{\mu_{t,even}}(z), \quad G^{odd}(t,z):=G_{\mu_{t,odd}}(z)$$
with $G=G^{even}+G^{odd}$.  As by the definition (\ref{def-Stieltjes}) of the  Stieltjes transform
$G_{\mu_t^*}(z)= -G_{\mu_t}(-z)$, and as
$G_{\mu_t}(z)=\sum_{l=0}^\infty c_l(t) z^{-(l+1)}$ for $|z|$ sufficiently large, we obtain that
\begin{align}\label{deg-g-even-odd}
  G^{even}(t,z)=& \frac{1}{2}( G(t,z)-G(t,-z))= \sum_{l=0}^\infty \frac{c_{2l}(t)}{z^{2l+1}}= z \tilde G^{even}(t,z^2)\notag\\
  G^{odd}(t,z)=& \frac{1}{2}( G(t,z)+G(t,-z))= \sum_{l=0}^\infty \frac{c_{2l+1}(t)}{z^{2l+2}}=  \tilde G^{odd}(t,z^2)
\end{align}
with the functions
\begin{equation}\label{g-tilde}
  \tilde G^{even}(t,z):=\sum_{l=0}^\infty \frac{c_{2l}(t)}{z^{l+1}}, \quad
  \tilde G^{odd}(t,z):=\sum_{l=0}^\infty \frac{c_{2l+1}(t)}{z^{l+1}}.
  \end{equation}
We use the corresponding notations and relations also for the measures $\mu_{N,t, \nu(N)}$.

In the setting of 
Proposition \ref{general-free-convolution-dunkl-firststep} we now have:

\begin{proposition}\label{general-free-convolution-dunkl-secondstep}
  The Stieltjes transforms $G^{even}, G^{odd}$ and the functions $\tilde G^{even},\tilde G^{odd}$
satisfy the PDEs
\begin{align}\label{pde-g-dunkl-tilde}
  \tilde G_t^{even}(t,z)&=-2\nu_0 \tilde G_z^{even}(t,z)-4 z\tilde G_z^{even}(t,z)\tilde G^{even}(t,z) -2\tilde G^{even}(t,z)^2,
  \notag\\
  \tilde G_t^{odd}(t,z)&=\Bigl(-2\nu_0- 4z\tilde G^{even}(t,z)\Bigr)\tilde G_z^{odd}(t,z)
\end{align}
and
\begin{align}\label{pde-g-dunkl}
   G_t^{even}(t,z)&= \nu_0 \Bigl(\frac{G^{even}(t,z)}{z^{2}} -\frac{G^{even}_z(t,z)}{z} \Bigr)  -2  G^{even}(t,z) G_z^{even}(t,z)
  \notag\\
   G_t^{odd}(t,z)&=\Bigl(-\frac{\nu_0}{z}- 2 G^{even}(t,z)\Bigr) G_z^{odd}(t,z)
\end{align}
for  $t\ge0$, $z\in H$. 
\end{proposition}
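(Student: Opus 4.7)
The plan is to first derive the two PDEs for the auxiliary functions $\tilde G^{even}$ and $\tilde G^{odd}$ directly from the moment recurrences in Lemma \ref{recurrence-sl-dunkl-det}, and then to deduce the PDEs for $G^{even}$ and $G^{odd}$ by the change of variables encoded in (\ref{deg-g-even-odd}). This mirrors the strategy used in Propositions \ref{general-free-convolution-a-secondstep} and \ref{general-free-convolution-b-secondstep}; the new feature is merely that we must carry along the even/odd split.

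First I would work formally on $|z|$ sufficiently large, where the series in (\ref{g-tilde}) converge absolutely and termwise differentiation is justified. Differentiating $\tilde G^{even}(t,z)$ coefficient by coefficient in $t$ and plugging in (\ref{recurrence-dunkl-gerade}) yields
\[
\tilde G^{even}_t(t,z)=\sum_{l\ge1}\Bigl(2l\nu_0\, c_{2l-2}(t)+2l\sum_{h=0}^{l-1}c_{2h}(t)c_{2l-2h-2}(t)\Bigr)z^{-(l+1)} .
\]
A reindexing $m=l-1$ together with the elementary identities
\[
-2\nu_0\tilde G^{even}_z(t,z)=2\nu_0\sum_{m\ge0}(m+1)c_{2m}(t)z^{-(m+2)},
\]
\[
-4z\tilde G^{even}_z\tilde G^{even}-2(\tilde G^{even})^2
=\sum_{m\ge0}2(m+1)\sum_{h=0}^{m}c_{2h}(t)c_{2(m-h)}(t)\,z^{-(m+2)}
\]
(the second relying on the symmetrization $\sum_{h=0}^{m}(h+1)c_{2h}c_{2(m-h)}=\tfrac{m+2}{2}\sum_{h=0}^{m}c_{2h}c_{2(m-h)}$) gives the first PDE. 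For the odd case, the same termwise differentiation and reindexing, combined with
\[
-4z\tilde G^{even}(t,z)\tilde G^{odd}_z(t,z)=4\sum_{m\ge0}\sum_{k=0}^{m}(k+1)c_{2(m-k)}(t)c_{2k+1}(t)\,z^{-(m+2)},
\]
yields the second PDE in (\ref{pde-g-dunkl-tilde}). Both identities are first established as equalities of Laurent series for $|z|$ large and then extended to $z\in H$ by analyticity, using the fact (guaranteed by Proposition \ref{general-free-convolution-dunkl-firststep}) that $\mu_t$ inherits the bound (\ref{strong-carleman}) and that $\tilde G^{even}, \tilde G^{odd}$ are analytic functions of $1/z^2$ in a neighborhood of $\infty$ and extend to analytic functions on $H$.

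It then remains to translate (\ref{pde-g-dunkl-tilde}) into (\ref{pde-g-dunkl}) via the substitution $G^{even}(t,z)=z\,\tilde G^{even}(t,z^2)$ and $G^{odd}(t,z)=\tilde G^{odd}(t,z^2)$ from (\ref{deg-g-even-odd}). A short calculation gives
\[
G^{even}_t=z\,\tilde G^{even}_t(t,z^2),\qquad
\tilde G^{even}_w(t,z^2)=\frac{G^{even}_z(t,z)}{2z^2}-\frac{G^{even}(t,z)}{2z^{3}},
\]
and plugging these into the first line of (\ref{pde-g-dunkl-tilde}) produces, after simplification, exactly the first line of (\ref{pde-g-dunkl}); a completely analogous computation with $G^{odd}_z=2z\,\tilde G^{odd}_w(t,z^2)$ yields the odd identity.

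The only non-mechanical point, and the one I would treat carefully, is the step from formal Laurent series to an identity of analytic functions on all of $H$: the series defining $\tilde G^{even}$ and $\tilde G^{odd}$ converge only for $|z|$ large, so I would argue that (i) the coefficient identities established termwise give equality on that domain, (ii) both sides of each PDE are analytic on $H$ (using the moment-condition bound (\ref{strong-carleman}) to control $G^{even},G^{odd}$ and their $z$-derivatives via Cauchy's formula, exactly as in the proof of Proposition \ref{general-free-convolution-a-secondstep}(2)), and (iii) the time derivative $G^{even}_t,G^{odd}_t$ exist and coincide with the pointwise limits of the corresponding derivatives of the prelimit Stieltjes transforms $G^N$, again by dominated convergence based on $|G^N_t(t,z)|\le 1/\Im z$. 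This identification of $G^{even}_t, G^{odd}_t$ with the RHS on a nonempty open set then extends to all of $H$ by analytic continuation.
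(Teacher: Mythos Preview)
Your proposal is correct and follows essentially the same route as the paper: derive the PDEs for $\tilde G^{even}$ and $\tilde G^{odd}$ by termwise differentiation of the Laurent series and substitution of the moment recurrences from Lemma~\ref{recurrence-sl-dunkl-det}, then obtain the PDEs for $G^{even}$ and $G^{odd}$ by the change of variables (\ref{deg-g-even-odd}). The paper organizes the even computation slightly more compactly by recognizing the convolution sum as $-2\frac{d}{dz}\bigl(z\,\tilde G^{even}(t,z)^2\bigr)$ rather than via your symmetrization identity, and it justifies the interchange of limits by appealing to the approximating PDEs for the finite-$N$ Stieltjes transforms; your version of the rigor argument via (\ref{strong-carleman}), Cauchy's formula, and the dominated-convergence bound $|G^N_t(t,z)|\le 1/\Im z$ is equivalent and in fact spells out a bit more detail.
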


\begin{proof}
  We first consider $\tilde G^{even}$. Here, (\ref{g-tilde}), $\frac{d}{dt}c_0(t)=0$, and (\ref{recurrence-dunkl-gerade}) imply that
  \begin{align}
    \frac{d}{dt}  \tilde G^{even}(t,z)&=\sum_{l=1}^\infty \frac{1}{z^{l+1}}\frac{d}{dt}c_{2l}(t)
    = \sum_{l=0}^\infty \frac{1}{z^{l+2}}\frac{d}{dt}c_{2l+2}(t)\notag\\
  &=  \sum_{l=0}^\infty \frac{1}{z^{l+2}}\Bigl( 2(l+1) \nu_0c_{2l}(t) +2\sum_{h=0}^l(l+1)c_{2h}(t) c_{2l-2h}(t)\Bigr)
\notag\\
  &=-2\nu_0 \tilde G_z^{even}(t,z)-2 \frac{d}{dz}\Bigl(  z\cdot \tilde G^{even}(t,z)^2\Bigr)   
  \end{align}
  which implies the first equation in (\ref{pde-g-dunkl-tilde}). Moreover,
  (\ref{g-tilde}), $\frac{d}{dt}c_1(t)=0$, and (\ref{recurrence-dunkl-ungerade})  lead to
\begin{align}
    \frac{d}{dt}  \tilde G^{odd}(t,z)&=\sum_{l=1}^\infty \frac{1}{z^{l+1}}\frac{d}{dt}c_{2l+1}(t)
    = \sum_{l=0}^\infty \frac{1}{z^{l+2}}\frac{d}{dt}c_{2l+3}(t)\notag\\
  &=  \sum_{l=0}^\infty \frac{1}{z^{l+2}}\Bigl( 2(l+1) \nu_0c_{2l+1}(t) +4\sum_{h=0}^l(l+1-h)c_{2h}(t) c_{2l+1-2h}(t)\Bigr)
\notag\\
  &=-2\nu_0 \tilde G_z^{odd}(t,z)-4   \tilde G^{even}(t,z)  \tilde G_z^{odd}(t,z)  . 
\end{align}
Please notice that we here interchanged derivatives and summations several times.
This can be made precise by the same methods as in the proof of Proposition \ref{general-free-convolution-a-secondstep}
by studying the corresponding approximating PDEs for  the Stieltjes transforms of the measures $\mu_{N,t, \nu(N)}$.

Finally, the equations in (\ref{pde-g-dunkl}) follow from that in (\ref{pde-g-dunkl-tilde}) by an easy computation.

\end{proof}

We now analyze the PDEs  (\ref{pde-g-dunkl}) with the corresponding initial data for $t=0$. Clearly, the quasilinear
PDE for  $G^{even}$ has to be solved first which then leads to the even parts $\mu_{t,even}$ of our probability measures $\mu_t$.
On the other hand, the remark after Lemma \ref{recurrence-sl-dunkl-det} implies
that the  $\mu_{t,even}$ can be determined via the results in Section 4. More precisely, this remark and
Theorem \ref{free-convolution-b}
imply:

\begin{corollary}\label{dunkl-even-solutions}
  For $t\ge0$, the even parts  $\mu_{t,even}$ of the  probability measures $\mu_t$ of Proposition
  \ref{general-free-convolution-dunkl-firststep} are the unique even probability measures on $\mathbb R$
  whose pushforwards under the mapping $x\mapsto x^2/2$ are given by $\mu_{MP, \nu_0, t}\boxplus (\mu_{sc, 2\sqrt{t}}\boxplus \mu_{even})^2$.
  Hence,
  \begin{equation}\label{mu-t-even-cor}
    \mu_{t,even}= \Bigl(\sqrt{\mu_{MP, \nu_0, 2t}\boxplus (\mu_{sc, 2\sqrt{2t}}\boxplus \mu_{even})^2}\bigr)_{even} \quad\quad(t\ge0).
    \end{equation}
\end{corollary}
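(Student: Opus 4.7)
The plan is to reduce the statement to the Marchenko--Pastur-type result of Theorem \ref{free-convolution-b} for Bessel processes of type $B$ via the moment method. Since $\mu_{t,even}$ is even, it is determined by its even moments $c_{2l}(t)$, which are also the $l$-th moments of the pushforward $(x\mapsto x^2)_*\mu_{t,even}$ on $[0,\infty[$. To prove the identity in (\ref{mu-t-even-cor}), it therefore suffices to identify this moment sequence with that of the Bessel-$B$ expression on the right-hand side; the Carleman condition (transferred from $\mu_t$ via Proposition \ref{general-free-convolution-dunkl-firststep}) then provides moment determinacy and hence uniqueness of the measure.

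The key observation is that the even recurrence (\ref{recurrence-dunkl-gerade}) becomes exactly the Bessel-$B$ recurrence (\ref{recurrence-laguerre}) after rescaling time by a factor of $2$: setting $\hat{c}_l(\tau):=c_{2l}(\tau/2)$, a direct differentiation gives
\begin{equation*}
\tfrac{d}{d\tau}\hat{c}_l(\tau) \;=\; l\Bigl(\nu_0\,\hat{c}_{l-1}(\tau) + \sum_{h=0}^{l-1}\hat{c}_h(\tau)\hat{c}_{l-1-h}(\tau)\Bigr),
\end{equation*}
which is precisely (\ref{recurrence-laguerre}) with the same $\nu_0$. The same initial data $\hat{c}_l(0)=c_{2l}(0)=\int x^{2l}\,d\mu(x)$ arises by taking the Bessel-$B$ starting measure $\tilde{\mu}:=|\mu|\in M^1([0,\infty[)$, since $|\mu|$ and $\mu$ share the same even moments. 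By uniqueness of solutions of the recurrence, the two moment sequences then coincide for all $\tau\ge 0$.

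A short Rademacher randomization further identifies $\tilde{\mu}_{even}$ with $\mu_{even}$: both are the distribution of $\varepsilon|X|$, where $X\sim\mu$ and $\varepsilon\in\{\pm 1\}$ is an independent Rademacher sign. Applying Theorem \ref{free-convolution-b} to $\tilde{\mu}$ at Bessel-$B$ time $\tau=2t$ thus yields the limit measure $\mu_{MP,\nu_0,2t}\boxplus(\mu_{sc,2\sqrt{2t}}\boxplus\mu_{even})^2$, whose $l$-th moment equals $\hat{c}_l(2t)=c_{2l}(t)$. Moment determinacy then forces $(x\mapsto x^2)_*\mu_{t,even}$ to equal this measure; taking $\sqrt{\cdot}$ and then the even extension back to $\mathbb{R}$ gives (\ref{mu-t-even-cor}). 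The main obstacle is the careful bookkeeping of the scalings -- the $\sqrt{N}$ versus $\sqrt{2N}$ normalizations, the time factor $\tau=2t$, and the equality $\tilde{\mu}_{even}=\mu_{even}$ -- so that the Marchenko--Pastur time $2t$ and the semicircle radius $2\sqrt{2t}$ emerge with the correct constants; the first (equivalent) form of the statement, involving the pushforward under $x\mapsto x^2/2$, then follows by combining the dilation rule $(y\mapsto ay)_*(\nu_1\boxplus\nu_2)=(y\mapsto ay)_*\nu_1\boxplus(y\mapsto ay)_*\nu_2$ with the scaling properties of the Marchenko--Pastur and semicircle families.
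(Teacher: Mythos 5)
Your argument is correct and follows essentially the same route as the paper: the paper's proof of this corollary consists precisely of the observation (recorded in the remark after Lemma \ref{recurrence-sl-dunkl-det}) that the even recurrence (\ref{recurrence-dunkl-gerade}) coincides with the Bessel-B recurrence (\ref{recurrence-laguerre}) up to a factor $2$, followed by a citation of Theorem \ref{free-convolution-b}, and you have supplied the details that the paper leaves implicit (the time change $\tau=2t$, the identification $(|\mu|)_{even}=\mu_{even}$, and the Carleman/determinacy step). The only cosmetic difference is that you absorb the factor $2$ as a time rescaling, whereas the paper attributes it to the $\sqrt{N}$ versus $\sqrt{2N}$ spatial normalizations of the empirical measures; both conventions land directly on (\ref{mu-t-even-cor}).
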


Therefore, if the initial measure $\mu_0=\mu$ is even, then the associated linear PDE for $ G^{odd}$ in  (\ref{pde-g-dunkl})
has the solution $ G^{odd}=0$, i.e., the $\mu_t$ of Proposition
\ref{general-free-convolution-dunkl-firststep} are given by the measures in Eq.~(\ref{mu-t-even-cor}).

We now study the case where  the initial measure $\mu_0=\mu$ is not even.
We here have to solve the linear PDE of first order in (\ref{pde-g-dunkl}) for  $ G^{odd}$ by using
$ G^{even}$ from Corollary \ref{dunkl-even-solutions}.
This can be carried out  by classical methods on PDEs; see e.g.~Section 1.2 of \cite{St}. 
However,  we can solve the second PDE (\ref{pde-g-dunkl}) directly:

\begin{lemma}\label{dunkl-solution-nu0}
  Let $\mu\in M^1(\mathbb R)$ be a probability measure for which the moment condition (\ref{strong-carleman}) holds.
Let $D\subset H$ be some non-empty open domain such that there is some (analytical) function $K$ with
$$K[G_{\mu_{even}}(z)\cdot(G_{\mu_{even}}(z)+\nu_0 G_{\delta_0}(z))]=z \quad\quad (z\in D).$$
Then, with the measures $\mu_{t,even}$ from  Eq.~(\ref{mu-t-even-cor}),
\begin{equation}\label{def-odd-stieltjes-nu0}
  G(t,z):=G_{\mu_{odd}}[K\big(G_{\mu_{t,even}}(z)\cdot(G_{\mu_{t,even}}(z)+\nu_0 G_{\delta_0}(z))\big)]+G_{\mu_{t,even}}(z)
\end{equation}
 is the solution of (\ref{pde-g-dunkl}) with the initial condition $G(0,z)=G_\mu(z)$ for $z\in D$.

In particular, for  $\nu_0=0$ this solution simplifies to
 \begin{equation}\label{def-odd-stieltjes-0}
     G(t,z)= G_\mu(G_{\mu_{even}}^{-1}( G_{\mu_{sc, 2\sqrt{2t}}\boxplus \mu_{even}}    (z))).
  \end{equation}  
\end{lemma}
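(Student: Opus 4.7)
Since the measure $\mu_{t,\text{even}}$ supplied by Corollary~\ref{dunkl-even-solutions} already realizes the first equation of (\ref{pde-g-dunkl}) with the correct initial data $G^{even}(0,z)=G_{\mu_{\text{even}}}(z)$, only the second equation remains to be solved. This is a linear first-order PDE for $G^{odd}$ with coefficient $-\nu_0/z-2G^{even}(t,z)$, and I will treat it by the method of characteristics. The characteristic curves $t\mapsto z(t)$ satisfy
\[
\dot z(t)=\frac{\nu_0}{z(t)}+2G^{even}(t,z(t)),
\]
and $G^{odd}$ is constant along each of them. Consequently $G^{odd}(t,z)=G_{\mu_{\text{odd}}}(z_0)$, where $z_0$ is the value at time $0$ of the characteristic passing through $(t,z)$; the remaining task is to express $z_0$ as an explicit function of $(t,z)$.

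The key step is to exhibit a first integral of the characteristic flow. Motivated by the form of (\ref{def-odd-stieltjes-nu0}), I propose
\[
F(t,z):=G^{even}(t,z)\Bigl(G^{even}(t,z)+\tfrac{\nu_0}{z}\Bigr).
\]
Using the first PDE of (\ref{pde-g-dunkl}) to substitute for $G_t^{even}$, a short computation shows that in the sum $F_t+(\nu_0/z+2G^{even})F_z$, the two contributions proportional to $\nu_0 G^{even}(2G^{even}+\nu_0/z)/z^2$ cancel, and so do the two occurrences of $(2G^{even}+\nu_0/z)^2 G_z^{even}$ with opposite signs, yielding
\[
F_t+\Bigl(\tfrac{\nu_0}{z}+2G^{even}\Bigr)F_z=0.
\]
Thus $F(t,z)=F(0,z_0)=G_{\mu_{\text{even}}}(z_0)(G_{\mu_{\text{even}}}(z_0)+\nu_0/z_0)$, and the hypothesis on $K$ inverts this to $z_0=K(F(t,z))$. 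Substituting into $G^{odd}(t,z)=G_{\mu_{\text{odd}}}(z_0)$ and adding $G^{even}(t,z)=G_{\mu_{t,\text{even}}}(z)$ produces (\ref{def-odd-stieltjes-nu0}).

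For the specialization $\nu_0=0$, both ingredients simplify: the Marchenko--Pastur factor $\mu_{MP,0,\cdot}=\delta_0$ is neutral for $\boxplus$, so Corollary~\ref{dunkl-even-solutions} collapses to $\mu_{t,\text{even}}=\mu_{sc,2\sqrt{2t}}\boxplus\mu_{\text{even}}$, while the first integral becomes $F=(G^{even})^2$. On a connected neighborhood where $G^{even}(t,\cdot)$ is univalent and continuous in $t$ from its value at $t=0$, the conserved quantity $(G^{even})^2$ pins down $G^{even}$ itself, whence $z_0=G_{\mu_{\text{even}}}^{-1}(G_{\mu_{sc,2\sqrt{2t}}\boxplus\mu_{\text{even}}}(z))$ and
\[
G(t,z)=G^{even}(t,z)+G^{odd}(t,z)=G_{\mu_{\text{even}}}(z_0)+G_{\mu_{\text{odd}}}(z_0)=G_\mu(z_0),
\]
which is (\ref{def-odd-stieltjes-0}). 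The main technical obstacle is not the algebraic check of the conservation law but the analytic bookkeeping---ensuring that characteristics remain inside $D$, that the hypothesis on $K$ suffices to invert $F$ along the whole flow, and (in the $\nu_0=0$ case) that the correct branch of the square root is selected from the initial data; for compactly supported $\mu$, these issues are resolved by shrinking $D$ if necessary and appealing to analyticity on $H$ together with continuation from the initial time.
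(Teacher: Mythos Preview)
Your proof is correct and is essentially the same argument as the paper's, only packaged differently. The paper writes the candidate odd part as $G^{odd}(t,z)=f\bigl(G^{even}(t,z)(G^{even}(t,z)+\nu_0/z)\bigr)$ and verifies the second PDE in (\ref{pde-g-dunkl}) by directly computing $G^{odd}_t$ and $G^{odd}_z$; unwinding the chain rule, that verification amounts precisely to your conservation law
\[
F_t+\Bigl(\tfrac{\nu_0}{z}+2G^{even}\Bigr)F_z=0\qquad\text{for}\qquad F(t,z)=G^{even}(t,z)\Bigl(G^{even}(t,z)+\tfrac{\nu_0}{z}\Bigr),
\]
with the first PDE used to eliminate $G^{even}_t$. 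Your method-of-characteristics framing has the modest advantage of explaining \emph{why} this particular $F$ appears (it labels the characteristics), whereas the paper simply posits the formula and checks it; conversely, the paper's direct verification sidesteps the analytic bookkeeping about characteristics staying in $D$ that you flag at the end. The treatment of the $\nu_0=0$ specialization is identical in both.
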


\begin{proof} We first notice that the Stieltjes transforms $G_\mu, G_{\mu_{even}}$ are
  analytic  on $\mathbb C\setminus\mathbb R$ where, by (\ref{g-tilde}),
  $ G_{\mu_{even}}(w)=  \frac{1}{w}+  \sum_{l=1}^\infty \frac{c_{2l}(0)}{w^{l+1}}$ for $|w|$ sufficiently large.
  This implies that $G_{\mu_{even}}\cdot(G_{\mu_{even}}+\nu_0 G_{\delta_0})$ is not constant on $H$ which ensures that there are plenty of 
  non-empty open domains $D\subset H$ where this function admits some analytical inverse function $K$.

  We now analyze the right hand side of (\ref{def-odd-stieltjes-nu0}).
As $(z,t)\mapsto G_{\mu_{t,even}}(z)$ solves the  first  PDE  in (\ref{pde-g-dunkl}) by Corollary \ref{dunkl-even-solutions},
we only have to establish that the first summand of (\ref{def-odd-stieltjes-nu0}) solves the second PDE  in (\ref{pde-g-dunkl}),
and that the initial value is correct. The first summand in (\ref{def-odd-stieltjes-nu0})
has the form $$G^{odd}(z,t):=f(G^{even}(z,t)\cdot(G^{even}(z,t)+\frac{\nu_0}{z})).$$ Hence,
$$G^{odd}_t(z,t)=f'(G^{even}(z,t)\cdot(G^{even}(z,t)+\frac{\nu_0}{z}))\cdot G_t^{even}(z,t)(2G^{even}(z,t)+\frac{\nu_0}{z})$$
and
\begin{eqnarray*}
G^{odd}_z(z,t)&=&f'(G^{even}(z,t)\cdot(G^{even}(z,t)+\frac{\nu_0}{z}))\cdot\\
&&\cdot[2G^{even}(z,t) G_z^{even}(z,t)+ G_z^{even}(z,t)\cdot\frac{\nu_0}{z}-\frac{\nu_0}{z^2} G^{even}(z,t)].
\end{eqnarray*}
These derivatives yield immediately that  $G^{odd}$ satisfies the second PDE  in (\ref{pde-g-dunkl}).
Moreover, a simple calculation  shows that the corresponding initial value is correct.

Let us consider the special case $\nu_0=0$. Here, $ G_{\mu_{t,even}}=G_{\mu_{sc, 2\sqrt{2t}}\boxplus \mu_{even}}$ and
 $K=(G_{\mu_{even}}^2)^{-1}$. This reduces  the function in (\ref{def-odd-stieltjes-nu0})  to
that in  (\ref{def-odd-stieltjes-0}).
\end{proof}

We now discuss an example with $\nu_0=0$ where the initial distribution $\mu$ has a semicircle law as initial even part.

\begin{example}\label{quarter-example}
  Let the starting distribution $\mu$ be the quartercircle distribution on $[0,2]$ with Lebesgue density
  $\frac{1}{\pi}\sqrt{4-x^2}{\bf 1}_{[0,2]}$.
  Then
  $$\mu_{even}=\mu_{sc,2}
  \quad\text{and}\quad
  \mu_{sc, 2\sqrt{2t}}\boxplus \mu_{even}=\mu_{sc, 2\sqrt{2t+1}}.$$
  Moreover, 
 \begin{equation}\label{stieltjes-trafo-symm-a1}
   G_{\mu_{sc, 2}}(z)=  \frac{1}{2}(z-\sqrt{z^2-4})\quad\text{and}\quad
   G_{\mu_{sc, 2}}^{-1}(z)=z+1/z
\end{equation}
  (see e.g.~page 305 of \cite{AGZ}) and thus
  \begin{align}\label{stieltjes-trafo-symm-a2}
    w(z):=G_{\mu_{sc, 2\sqrt{2t+1}}}(z)&= \frac{1}{\sqrt{2t+1}}G_{\mu_{sc, 2}}\Bigl(\frac{z}{\sqrt{2t+1}}\Bigr)\\
    &= \frac{1}{2(2t+1)}\Bigl( z- \sqrt{z^2-4(2t+1)}\Bigr).
  \notag\end{align}

Now consider the probability measures $\mu_t$ for $t\ge0$ whose even parts are the  semicircle laws $\mu_{sc, 2\sqrt{2t+1}}$.
  The $\mu_t$ are supported on $[-2\sqrt{2t+1}, +2\sqrt{2t+1}]$ and admit densities $f_t$.
We recapitulate from Theorem 2.4.3 of \cite{AGZ} that for $x\in]-2\sqrt{2t+1}, +2\sqrt{2t+1}[$
\begin{equation}\label{density-from-Stieltjes}
  f_t(x)= \frac{-1}{\pi}\lim_{\epsilon\downarrow 0} \Im G(t, x+i\epsilon)
\end{equation}
where we can determine $G(t, x+i\epsilon)$ by Lemma \ref{dunkl-solution-nu0}.
Moreover, as the even parts of the $\mu_t$ are already known, we can restrict our attention to  $x\in]0,2\sqrt{2t+1}[$.
    
 For $z:=x+i\epsilon$ with $\epsilon>0$ we then see that
 \begin{equation}\label{def-w}
   \lim_{\epsilon\downarrow 0} w(x+i\epsilon)=\frac{1}{2(2t+1)}\Bigl( x- i\sqrt{4(2t+1)-x^2}\Bigr)=w(x)=:w
   \end{equation}
    is contained in the closed fourth quadrant with $|w|= 1/\sqrt{2t+1}\le1$. In particular, 
    $1/w$ is in the closed  first quadrant with $|1/w|\ge|w|$, and 
    $1/w +w$ and $1/w -w$ are also in the  closed  first quadrant.

We next determine
$$G_\mu(u)=\frac{1}{\pi}\int_{0}^{2} \frac{\sqrt{4-x^2}}{u-x}\> dx .$$
As e.g.~by WOLFRAM Alpha
\begin{align}
  \int\frac{\sqrt{4-x^2}}{u-x}\> dx
  =&-\sqrt{4 - x^2}+ u \arcsin(x/2) \notag\\
&+ \sqrt{4 - u^2}\Bigl(\ln(\sqrt{(4 - x^2)(4 - u^2)}  - x u + 4)-\ln(u - x)\Bigr)+c,
\notag\end{align}
we obtain 
\begin{align}\label{gmu-special}
  G_\mu(u)&= \frac{2}{\pi}+ \frac{u}{2}+ \frac{\sqrt{4-u^2}}{\pi}
  \Bigl(\ln(2-u)-\ln(u-2) -\ln(\sqrt{4-u^2}+2)+\ln u\Bigr) \notag\\
  &= \frac{2}{\pi}+\frac{u}{2} \pm \sqrt{u^2-4}- \frac{\sqrt{4-u^2}}{\pi} \Bigl(\ln(\sqrt{4-u^2}+2)-\ln u\Bigr).
\end{align}
For $w$ as above we now define  $u:=1/w +w$. We then have $\sqrt{u^2-4}=\pm(1/w -w)$. An analysis of the correct
branch of the square root for $u$ in the closed first quadrant and
   (\ref{def-odd-stieltjes-nu0}) now lead to 
\begin{align}\lim_{\epsilon\downarrow 0} G(t,x+i\epsilon)&= G_\mu(G_{\mu_{even}}^{-1}( G_{\mu_{sc, 2\sqrt{2t}}\boxplus \mu_{even}}    (x)))\\
  &=\frac{2}{\pi}+ \frac{1/w +w}{2}+(w- 1/w)- \frac{i}{\pi}(1/w-w)\ln\Bigl(\frac{\sqrt{4-u^2}+2}{u}\Bigr)\notag\\
  &=\frac{2}{\pi}+ \frac{1/w +w}{2}+(w- 1/w)- \frac{i}{\pi}(1/w-w)\ln\Bigl(\frac{ i(1/w-w)+2}{1/w +w}\Bigr).\notag
\end{align}
Moreover, as $1/w=(2t+1)\bar w$, we obtain from (\ref{def-w}) that
$$\Im\bigl(\frac{1/w +w}{2}+(w- 1/w)\bigr)= -\frac{1+t/2}{2t+1}\sqrt{4(2t+1)-x^2}$$
and
$$\Im(w- 1/w)=- \frac{t+1}{2t+1}\sqrt{4(2t+1)-x^2}.$$
Furthermore, as
\begin{align}
  \frac{ i(1/w-w)+2}{1/w +w}&=\frac{ i-iw^2+2w}{1 +w^2}=\frac{ (-i)(w^2+2iw-1)}{1 +w^2}
  =\frac{  (-i)(w+i)^2}{(w+i)(w-i)}
\notag \\
  &  =\frac{  (-i)(w+i)}{w-i}
 =\frac{  (-i)(w+i)(\bar w +i)}{(w-i)(\bar w +i)}=\frac{  (-i)(w+i)(\bar w +i)}{|w-i|^2}\notag \\
  & =\frac{w+ \bar w+i(1-w\bar w)}{|w-i|^2},
  \notag\end{align}
  we obtain from (\ref{def-w}) that
$$\Im\ln \Bigl(\frac{ i(1/w-w)+2}{1/w +w}\Bigr)=\arctan \Bigl(\frac{1-w\bar w}{w+ \bar w}\Bigr)
  =\arctan (2t/x).$$
  Finally,
  $\Re(w- 1/w)=- \frac{tx}{2t+1}$ and
	$$\Re\ln \Bigl(\frac{ i(1/w-w)+2}{1/w +w}\Bigr)=\Re\ln\Bigl(\frac{w+i}{w-i} \Bigr)
  = \frac{1}{2}\ln\Bigl(\frac{2(t+1)-\sqrt{4(2t+1)-x^2}}{ 2(t+1)+\sqrt{4(2t+1)-x^2}}\Bigr)$$
  In summary, we see from $\arctan y + \arctan (1/y)=\pi/2$ for $y>0$ that for $x\in]0,2\sqrt{2t+1}[$,
    \begin{align}\label{density-dunkl-final}
      f_t(x)&=\frac{-1}{\pi}\lim_{\epsilon\downarrow 0}\Im G(t,x+i\epsilon)\\
      &=\frac{1}{(2t+1)\pi}\Bigl(\frac{1}{2}+\frac{t+1}{\pi} \arctan \frac{x}{2t} \Bigr)\sqrt{4(2t+1)-x^2}
      \notag\\
      &\quad -\frac{1}{\pi^2} \frac{tx}{2(2t+1)}\ln\Bigl(\frac{2(t+1)+\sqrt{4(2t+1)-x^2}}{ 2(t+1)-\sqrt{4(2t+1)-x^2}}\Bigr).
     \notag\end{align}
     Moreover, as the even  part of $\mu_t$ is the semicircle law $\mu_{sc, 2\sqrt{2t+1}}$, Eq.~(\ref{density-dunkl-final})
     remains also valid for  $x\in]-2\sqrt{2t+1},0[$.

     In order to understand densities $f_t$ better,
     we define the rescaled densities
     $$\tilde f_t(x)=\sqrt{2t+1} f(x\sqrt{2t+1})$$  of probability measures on $[-2,2]$.
     We then have for $x\in[-2,2]$,
     \begin{align}\tilde f_t(x)
&=  \frac{1}{\pi}\Bigl(\frac{1}{2}+\frac{t+1}{\pi}\arctan(\sqrt{2t+1}x/(2t))\Bigr)\sqrt{4-x^2}
\notag\\
&\quad -\frac{1}{\pi^2} \frac{tx}{2}
\ln\Bigl(\frac{1+\frac{\sqrt{2t+1}}{2(t+1)}\sqrt{4-x^2}}{ 1-\frac{\sqrt{2t+1}}{2(t+1)}\sqrt{4-x^2}}\Bigr). \end{align}
A Taylor expansion of the $\arctan$- and $\ln$-term now yields that
\begin{equation}
  \tilde f_t(x)= \frac{\sqrt{4-x^2}}{2\pi}+\frac{2\sqrt 2 \cdot x\sqrt{4-x^2}}{3\pi^2}\cdot O(\frac{1}{\sqrt{t}}) +O(1/t)
\end{equation}
for $t\to\infty$ where the term $ O(\frac{1}{\sqrt{t}})$ is independent of $x\in[-2,2]$
while the term $O(1/t)$ may depend on $x$.  In particular, for $t\to\infty$, $\tilde f_t$ tends to the density of the
semicircle law $\mu_{sc,2}$, i.e.,  the influence of the asymmetric starting measure vanishes
of order $O(\frac{1}{\sqrt{t}})$. Figure \ref{fig} illustrates the time-behaviour of $\tilde f_t(x)$.
It is plotted for $t=0.1$ (bold black), $t=1$ (dashed), $t=10$ (dashed small), $t=100$ (dotted)
together with the start $t=0$ and the limit $t\to\infty$ (both grey).
\begin{figure}[h]
			\centering 
			\includegraphics[scale=0.35]{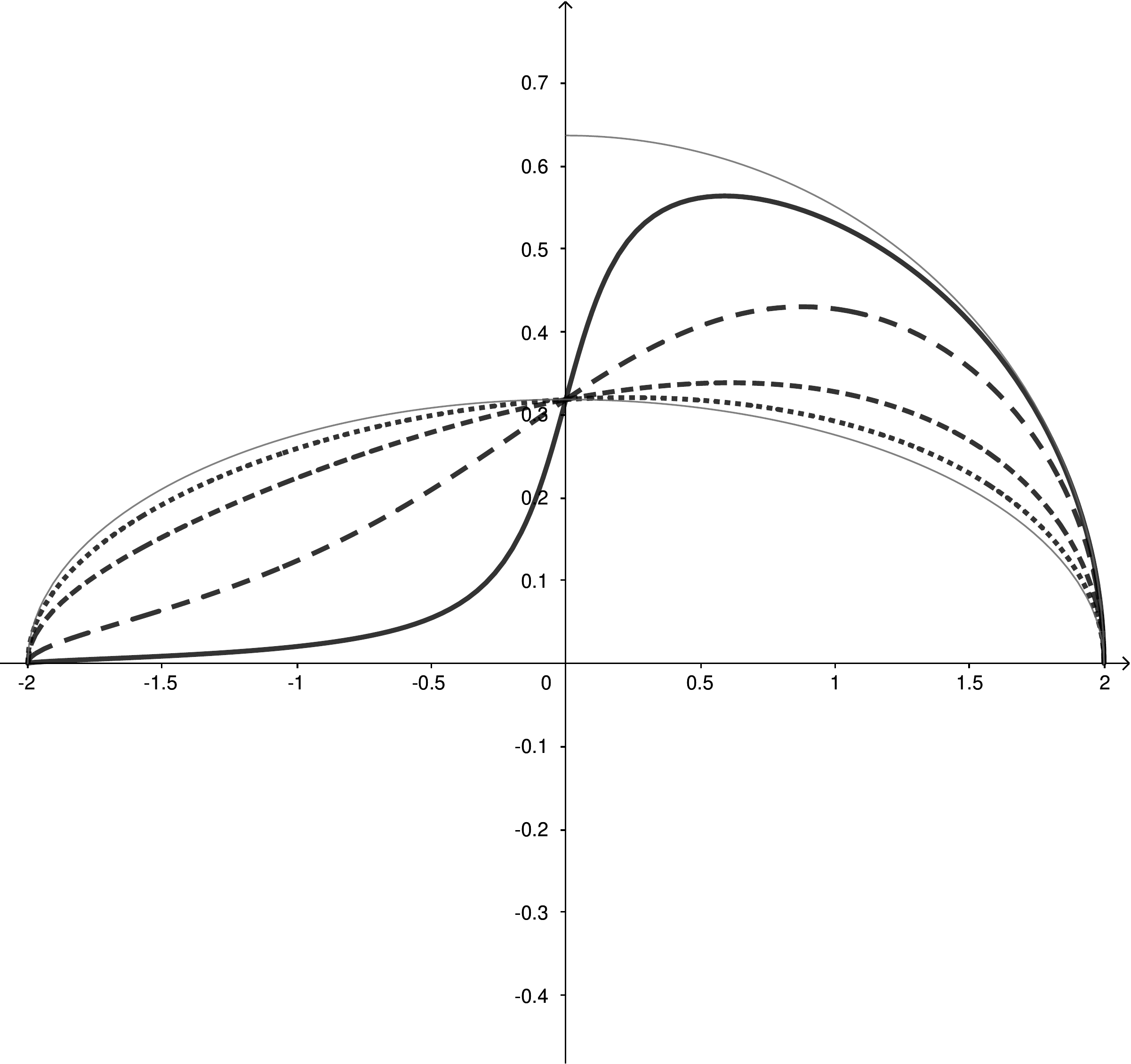}
			\caption{ $\tilde f_t(x)$ for $t= 0, \;\, 0.1,\;\, 1,\;\, 10,\;\, 100$ and $t= \infty$.}
			\label{fig}
	\end{figure} 
  \end{example}

 \section{Limit theorems for Dunkl processes of type B}

In this section we proceed to the next step and study the empirical distributions of
 normalized Dunkl processes $(\tilde{X}_{t,\nu,\beta})_{t\ge0}$ of type $B$ on $\mathbb R^N$ 
 with the generators $\tilde{\cal L}_{\nu, \beta}= \frac{1}{2\beta} \Delta+L_{\nu}$ with $L_{\nu}$
 as in (\ref{generator-frozen-b}); see Example 6.2.
 On some informal level,
  the processes $(\tilde{X}_{t,\nu,\beta})_{t\ge0}$ converge  for $\beta\to\infty$
 to the frozen Dunkl processes.
We assume that these processes start in deterministic points
in  $\mathbb R^N$ independent of $\beta$.

We denote the components of $X_{t,\nu,\beta}$ by 
$X^j_{t,\nu,\beta}$  for $j=1,\ldots,N$, and similar to
 Sections 3 and 5 we study
the  random normalized
 empirical measures
\begin{equation}\label{emp-measure-basic-b-dunkl-renorm}
\mu_{N,t,\nu,\beta}:= \frac{1}{N}(\delta_{\tilde X^1_{t,\nu,\beta}/\sqrt N}+\ldots +\delta_{\tilde X^N_{t,\nu,\beta}/\sqrt N})\in M^1(\mathbb R)
\end{equation}
of the processes  $(\tilde X_{t,\nu, \beta})_{t\ge0}$  for $N\to\infty$.
We claim that the measures $\mu_{N,t,\nu,\beta}$  converge
to the same limit as the normalized empirical measures of the expectations of the frozen Dunkl process of the previous section.
For this we study the moments
\begin{equation}\label{def-moments-case-b-dunkl-renorm}
  S_{N,l,\nu,\beta}(t):= \frac{1}{N^{l/2+1}}((\tilde X^1_{t,\nu,\beta})^l+\ldots+(\tilde X^N_{t,\nu,\beta})^l)   \quad\quad (l\ge0).\end{equation}
By the construction of the processes $(\tilde X_{t,\nu, \beta})_{t\ge0}$, the even moments
$ S_{N,2l,\nu,\beta}(t)$  are closely related to corresponding moments of the Bessel processes of type B,
and for them we can proceed as in Section 5.
The odd moments however are different due to the additional jump components.
We proceed as in Section 3 with a Lemma concerning the symmetric monomials $m_\lambda$ and refer to the notation there.

\begin{lemma}\label{general-limit-expectations-dunkl}
Let  $(x_N)_{N\ge1}:=(x_{N,n})_{N\ge1, 1\le n\le N}\subset\mathbb R$  be a family of starting numbers with $x_{N,n-1}\ge x_{N,n}$
for $2\le n\le N$, 
for which $$\lim_{N\to\infty} \frac{m_\lambda(x_N)}{N!\cdot N^{|\lambda|/2}}<\infty$$
exists for all  $\lambda\in \cal P$.

Let $\beta\in[1/2,\infty]$, $\nu>0$, and $(\tilde X_{t,\nu,\beta})_{t\geq 0}$ the renormalized Dunkl processes
with  start in $(x_{N,1}, \ldots,x_{N,n})$.
 Then, for all $\lambda\in \cal P$, the limits
$$\lim_{N\to\infty} \frac{E(m_\lambda(\tilde X_{t,\nu,\beta}))}{N!\cdot N^{|\lambda|/2}}$$
exist locally uniformly in $t$ and are independent from $\beta$.
\end{lemma}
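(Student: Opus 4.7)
The strategy is to run an induction on $|\lambda|$ parallel to the proof of Lemma \ref{general-limit-expectations-a}, with It\^o's formula replaced by Dynkin's formula (\ref{dynkin-formula})--(\ref{dynkin-formula2}) for the generator $\tilde{\mathcal{L}}_{\nu,\beta}=\frac{1}{2\beta}\Delta+L_\nu$ of the renormalized Dunkl process. The base cases $|\lambda|=0$ and $|\lambda|=1$ are immediate: $m_0\equiv N!$, while $m_1=(N-1)!\,u_1$ satisfies $\tilde{\mathcal{L}}_{\nu,\beta}u_1=0$ (since $\Delta u_1=0$ and $L_\nu u_1=0$, the latter already observed in Section 7), so that $E(m_1(\tilde X_{t,\nu,\beta}))=m_1(x_N)$ for every $\beta\in[1/2,\infty]$.

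For the inductive step at level $|\lambda|\ge 2$, Dynkin's formula yields
\[
\frac{E(m_\lambda(\tilde X_{t,\nu,\beta}))}{N!\,N^{|\lambda|/2}} = \frac{m_\lambda(x_N)}{N!\,N^{|\lambda|/2}} + \int_0^t \frac{E\bigl((\tilde{\mathcal{L}}_{\nu,\beta} m_\lambda)(\tilde X_{s,\nu,\beta})\bigr)}{N!\,N^{|\lambda|/2}}\,ds.
\]
The decisive structural point is that the Dunkl Laplacian of type $B_N$ is homogeneous of degree $-2$ on polynomials and $W$-equivariant, so $\tilde{\mathcal{L}}_{\nu,\beta} m_\lambda$ is a $W$-invariant homogeneous polynomial of degree $|\lambda|-2$, hence expands as $\sum_{|\tilde\lambda|=|\lambda|-2} c_{\tilde\lambda}(\beta,N)\,m_{\tilde\lambda}$. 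Writing $c_{\tilde\lambda}(\beta,N)=\frac{b_{\tilde\lambda}}{2\beta}+a_{\tilde\lambda}(N)$, where $b_{\tilde\lambda}$ is independent of $N$ (and comes from $\Delta m_\lambda$), I would show that $a_{\tilde\lambda}(N)=O(N)$ and that $a_{\tilde\lambda}(N)/N$ converges as $N\to\infty$. The induction hypothesis identifies the limit of the integrand; dominated convergence (justified by a standard a priori estimate on $\sum_i(\tilde X^i_{s,\nu,\beta})^2$ via comparison with a squared Bessel process, exactly as at the end of the proof of Lemma \ref{general-limit-expectations-a}) transports this to the integral; and the $\frac{1}{2\beta}\Delta m_\lambda$ contribution disappears of order $O(1/N)$ for every $\beta\ge 1/2$, which is precisely what yields the asserted $\beta$-independence of the limit.

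The main technical obstacle is establishing the coefficient bound $a_{\tilde\lambda}(N)=O(N)$ together with the convergence of $a_{\tilde\lambda}(N)/N$, and this must be checked piece by piece within $L_\nu m_\lambda$. The pair drift $\sum_i\sum_{j\ne i}\frac{2x_i}{x_i^2-x_j^2}(m_\lambda)_{x_i}$ is handled by the antisymmetrization argument of (\ref{fracction-term}), combined with the polynomial identity that $\frac{x^a-(-y)^a}{x+y}$ is itself a polynomial; the pair jump $\frac{1}{2}\sum_{i\ne j}\frac{m_\lambda(\sigma_{i,j}^- x)-m_\lambda(x)}{(x_i+x_j)^2}$ reduces to a symmetric polynomial of degree $|\lambda|-2$ by an analogous substitution $\pi\mapsto\pi\circ(i,j)$ in the sum defining $m_\lambda$, while the $\sigma_{i,j}$ jump is identically zero on $W$-symmetric $m_\lambda$. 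The remaining $\nu/x_i$ drift and the $\sigma_i$ jump, which are individually singular at $x_i=0$, must be grouped together before any inspection (they both originate from the single root $\sqrt 2\, e_i$): their sum is a symmetric polynomial whose $m_{\tilde\lambda}$-coefficients are $N$-independent, so that the single summation over $i$ contributes only $O(N)$ in total, compatible with the required scaling. Once these three estimates are in place the inductive step closes, and the frozen case $\beta=\infty$ is subsumed without further work.
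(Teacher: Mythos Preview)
Your proposal is correct and follows the same inductive skeleton as the paper, but with one genuine simplification: you invoke Dynkin's formula to pass directly to the identity $E(m_\lambda(\tilde X_{t,\nu,\beta}))=m_\lambda(x_N)+\int_0^t E(\tilde{\mathcal L}_{\nu,\beta}m_\lambda(\tilde X_{s,\nu,\beta}))\,ds$, whereas the paper instead applies the It\^o formula for Dunkl processes from \cite{CGY} to write $m_\lambda(\tilde X_{t,\nu,\beta})$ as the starting value plus a Brownian integral, plus jump integrals against the discontinuous martingales $M^i,M^{ij},M^{ij-}$, plus the drift, and then separately checks that each stochastic integral is a genuine martingale (the Brownian one via the squared-Bessel bound you also cite, the jump ones via their compensated-jump representation). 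Your route is shorter but tacitly relies on the same verifications to ensure that the Dynkin local martingale is a true martingale for polynomial observables; once that is granted, the two arguments coincide. Your term-by-term dissection of $L_\nu m_\lambda$ is in fact more detailed than the paper's treatment, which simply cites Dunkl theory for the fact that $\tilde{\mathcal L}_{\nu,\beta}m_\lambda=\frac{1}{2\beta}Q_\lambda+R_\lambda$ is symmetric homogeneous of degree $|\lambda|-2$ and then asserts the coefficient asymptotics by analogy with Lemma~\ref{general-limit-expectations-a}. One terminological slip: the $m_\lambda$ are $S_N$-invariant but not $W$-invariant (the type $B$ Weyl group contains sign changes), so the vanishing of the $\sigma_{i,j}$ jump term is due to permutation invariance alone, not full $W$-symmetry.
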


\begin{proof} We prove this statement by induction on  $|\lambda|$.
For  $\lambda=0$ we have $m_\lambda(x)=N!$ and thus  the claim.
For $\lambda=(1,0,\ldots,0)$  we have  $m_\lambda(x)=(N-1)!\cdot(x_1+\ldots+ x_N)$.
Thus, by It\^{o}'s formula for Dunkl processes  in Corollary 3.6 of \cite{CGY}
\begin{equation}\label{dunkl-sum1}
    \sum_{i=1}^N\tilde X_{t,\nu,\beta}^i = \sum_{i=1}^N x_{N,i} + \frac{1}{\sqrt \beta}\sum_{i=1}^N B_t^i+\sqrt{2\nu}\sum_{i=1}^N M_t^i+2 \sum_{i=1}^N\sum_{j\neq i} M_t^{ij-},\end{equation}
where, by Eq.~(50) of \cite{CGY},
the jump components of the normalized Dunkl process $M^i$ and $M^{ij^-}$ associated to the different roots are given by
\begin{eqnarray*}
  M_t^i&=&\sum_{s\leq t}\frac{-\sqrt{2}\tilde X^i_{s-,\nu,\beta}}{\sqrt{\nu}}{\bf{1}}_{(-\tilde X^{i}_{s,\nu,\beta}\neq \tilde X^i_{s-,\nu,\beta})}+
  \int_0^t \frac{\sqrt{\nu}}{\sqrt{2}\tilde X^i_{s-,\nu,\beta}} ds\\
  M_t^{ij^-}&=&\sum_{s\leq t}-(\tilde X^i_{s-,\nu,\beta}+\tilde X^j_{s-,\nu,\beta}){\bf{1}}_{(-\tilde X^{i}_{s,\nu,\beta}\neq \tilde X^j_{s-,\nu,\beta})}+
  \int_0^t \frac{1}{\tilde X^i_{s-,\nu,\beta}+\tilde X^j_{s-,\nu,\beta}} ds.
\end{eqnarray*}
Notice that in  the RHS of (\ref{dunkl-sum1})  the additional sum $S:=2 \sum_{i=1}^N\sum_{j\neq i} M_t^{ij}$
with
$$M_t^{ij}=\sum_{s\leq t}-(\tilde X^i_{s-,\nu,\beta}-\tilde X^j_{s-,\nu,\beta}){\bf{1}}_{(\tilde X^{i}_{s,\nu,\beta}\neq \tilde X^j_{s-,\nu,\beta})}+
\int_0^t \frac{1}{\tilde X^i_{s-,\nu,\beta}-\tilde X^j_{s-,\nu,\beta}} ds$$
appears for which $S=0$ holds.
As the  $M_t^i,  M_t^{ij^-}$ are martingales by \cite{CGY}, the claim  follows for $| \lambda|=1$.

Now let  $\lambda\in \cal P$ with  $| \lambda|\ge 2$;
 assume that the statement is already shown for partitions with weight at most $| \lambda|-1$.
 It\^{o}'s formula  in Corollary 3.6 of \cite{CGY}
yields
\begin{equation}\label{SDE-Moments-dunkl-allg}
  m_\lambda(\tilde X_{t,\nu,\beta})=
  m_\lambda(x_{N})+\frac{1}{\sqrt{\beta}}\sum_{i=1}^N\int_0^t \frac{ dm_\lambda}{dx_i}(\tilde X_{s,\nu,\beta})\>  dB_s^i +{\cal M}_t
+ \int_0^t (\tilde{\cal L}_{\nu,\beta}m_\lambda)(\tilde X_{s,\nu,\beta})\>ds
\end{equation}
with
\begin{align}\label{complete-jump-martingale-dunkl}
 & {\cal M}_t:=\sqrt{\nu}\sum_{\pi\in S_N}\sum_{i=1}^N\int_0^t
  \frac{((\tilde X_{s-,\nu,\beta}^i)^{\lambda_{\pi(i)}}-(-\tilde X_{s-,\nu,\beta}^i)^{\lambda_{\pi(i)}})(\tilde X_{s-,\nu,\beta}^{\pi(\lambda)})_i}{\sqrt{2}
    \tilde X_{s-,\nu,\beta}^i}\>dM_s^i\\
  &+ \sum_{\pi\in S_N} \sum_{i,j:\> j\neq i}\int_0^t\frac{(\tilde X_{s-,\nu,\beta}^i)^{\lambda_{\pi(i)}}(\tilde X_{s-,\nu,\beta}^j)^{\lambda_{\pi(j)}}
    -(-\tilde X_{s-,\nu,\beta}^i)^{\lambda_{\pi(j)}}(-\tilde X_{s-,\nu,\beta}^j)^{\lambda_{\pi(i)}}}{\tilde X_{s-,\nu,\beta}^i+\tilde X_{s-,\nu,\beta}^j}\notag\\
&\quad\quad\quad\quad\quad\quad\quad\quad\quad\quad\quad\quad\cdot (\tilde X_{s-,\nu,\beta}^{\pi(\lambda)})_{i,j}\>  dM_s^{ij^-}\notag\\
&+ \sum_{\pi\in S_N} \sum_{i,j:\> j\neq i}\int_0^t\frac{(\tilde X_{s-,\nu,\beta}^i)^{\lambda_{\pi(i)}}(\tilde X_{s-,\nu,\beta}^j)^{\lambda_{\pi(j)}}
    -(\tilde X_{s-,\nu,\beta}^i)^{\lambda_{\pi(j)}}(\tilde X_{s-,\nu,\beta}^j)^{\lambda_{\pi(i)}}}{\tilde X_{s-,\nu,\beta}^i-\tilde X_{s-,\nu,\beta}^j}\notag\\
 &\quad\quad\quad\quad\quad\quad\quad\quad\quad\quad\quad\quad\cdot (\tilde X_{s-,\nu,\beta}^{\pi(\lambda)})_{i,j}\> dM_s^{ij}\notag
  \end{align}
where $(x^{\pi(\lambda)})_{i}$ and $(x^{\pi(\lambda)})_{i,j}$ denote the multivariate products as  in Section 3
where the factors involving $x_i$ or in addition  $x_j$ are omitted respectively.

The diffusion parts $\frac{1}{\sqrt{\beta}}\sum_{i=1}^N\int_0^t \frac{ dm_\lambda}{dx_i}(\tilde X_{s,\nu,\beta})\>  dB_s^i$
in (\ref{SDE-Moments-dunkl-allg}) are martingales by the same arguments as in the proof of Lemma \ref{general-limit-expectations-a},
taking into account that the sum of the squared components is again a one-dimensional Bessel process as all contributions
from the jump components vanish. This yields
\begin{equation*}
E\left(\sum_{i=1}^N\int_0^t\frac{ dm_\lambda}{dx_i}(\tilde X_{s,\nu,\beta}) \> dB_s^i\right)=0 \quad\quad ( t\ge0).
\end{equation*}
Moreover, the integrals w.r.t.~$(M_t^i)_t, (M_t^{ij^-})_t,  (M_t^{ij})_t $
 are also martingales and hence their expectations equal to zero.
This follows easily from  the representation of these martingales $(M_t^i)_t, (M_t^{ij^-})_t,  (M_t^{ij})_t $
as  compensated sums of jumps as on p.~125 of \cite{CGY}; for instance, for  $(M_t^i)_t$ we have
\begin{eqnarray*}
\lefteqn{\sum_{\pi\in S_N}\sum_{i=1}^N\int_0^t\frac{((\tilde X_{s-,\nu,\beta}^i)^{\lambda_{\pi(i)}}-(-\tilde 
X_{s-,\nu,\beta}^i)^{\lambda_{\pi(i)}})(\tilde X_{s-,\nu,\beta}^{\pi(\lambda)})_i}{\sqrt{2}\tilde 
X_{s-,\nu,\beta}^i}\sqrt{\nu}dM_s^i}&&\\
&=&-\sum_{\pi\in S_N}\sum_{i=1}^N\sum_{s\leq t}((\tilde X_{s-,\nu,\beta}^i)^{\lambda_{\pi(i)}}-(-\tilde 
X_{s-,\nu,\beta}^i)^{\lambda_{\pi(i)}})(\tilde X_{s-,\nu,\beta}^{\pi(\lambda)})_i{\bf{1}}_{(-\tilde X^{i}_{s,\nu,\beta}\neq \tilde 
X^i_{s-,\nu,\beta})}\\
&&+\nu \int_0^t\sum_{\pi\in S_N}\sum_{i=1}^N ((\tilde X_{s-,\nu,\beta}^i)^{\lambda_{\pi(i)}}-(-\tilde 
X_{s-,\nu,\beta}^i)^{\lambda_{\pi(i)}})(\tilde X_{s-,\nu,\beta}^{\pi(\lambda)})_i ds.
\end{eqnarray*}

We finally turn to the drift term of the RHS in in (\ref{SDE-Moments-dunkl-allg}).
We there observe that by the theory of Dunkl operators (see e.g.~\cite{DX}) $\tilde{\cal L}_{\nu,\beta}m_\lambda$ is a homogeneous
polynomial of the order $|\lambda|-2$. Moreover, by the definition of $\tilde{\cal L}_{\nu,\beta}$ in Section 6,
it can be easily checked that it also symmetric and that it has the form
$\frac{1}{2\beta} Q_\lambda + R_\lambda$
with 
$$Q_\lambda(x)=\sum_{\pi\in S_N} \sum_{i=1}^N
\lambda_{\pi(i)}(\lambda_{\pi(i)}-1)(x_i)^{\lambda_{\pi(i)}-2}( x^{\pi(\lambda)})_i$$
and with some symmetric  homogeneous polynomial  $R_\lambda$ of order $|\lambda|-2$ which only depends on $\nu$, but not on $\beta$.
The methods of the proof of (\ref{independence-reason})   show that
$Q_\lambda$ is a linear combination of the $m_{\tilde\lambda}$ with $|\tilde\lambda|=|\lambda|-2$ with coefficients independent of $N\ge L(\lambda)$.
Moreover, as in the  proof of Lemma \ref{general-limit-expectations-a},  $R_\lambda$
is a linear combination of the $m_{\tilde\lambda}$ with $|\tilde\lambda|=|\lambda|-2$ with coefficients
$c_{\tilde\lambda}$ such that the terms $c_{\tilde\lambda}/N$ converge to some limits for $N\to\infty$. 
As in the proof of Lemma \ref{general-limit-expectations-a}, these assertions together with the
induction assumption now lead to claim for $\lambda$.
\end{proof}

\begin{remark} The proof of Lemma \ref{general-limit-expectations-dunkl} shows that for fixed $\beta$ and $\lambda$, the limit in 
Lemma \ref{general-limit-expectations-dunkl} has order $O(1/N)$.
  \end{remark}

Lemma \ref{general-limit-expectations-dunkl} has the following application to the moments $S_{N,l}(t)$:
    
\begin{corollary}\label{convergence-in-expectation-dunkl}
    Let  $(x_{N,n})_{N\ge1, 1\le n\le N}\subset\mathbb R$  be starting numbers with $x_{N,n-1}\ge x_{N,n}$
for $2\le n\le N$,   for which  the convergence condition in Lemma \ref{recurrence-sl-dunkl-det} holds. 
Let $\beta\in[1/2,\infty]$, $\nu>0$ , and let for $N\ge2$,  $(\tilde X_{t,\nu, \beta})_{t\geq 0}$ 
 the renormalized Dunkl processes 
 starting in $(x_{N,1}, \ldots,x_{N,n})$. Then, for $l\in\mathbb N_0$ and $c_l(t)$ from Lemma  \ref{recurrence-sl-dunkl-det},
 $$E(S_{N,l}(t))\to c_l(t) \quad\quad\text{for}\quad N\to\infty$$
 \end{corollary}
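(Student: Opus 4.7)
The plan is to reduce the claim to Lemma \ref{general-limit-expectations-dunkl} together with the analysis of the frozen case ($\beta=\infty$) in Section 7. The first step is to rewrite the moments $S_{N,l}(t)$ in terms of symmetric monomials: taking $\lambda=(l,0,\ldots,0)\in\mathcal P$ we have $m_\lambda(x)=(N-1)!\sum_{i=1}^N x_i^l$, so that
$$S_{N,l,\nu,\beta}(t)=\frac{m_\lambda(\tilde X_{t,\nu,\beta})}{N!\cdot N^{l/2}},$$
and the convergence statement we want to prove becomes
$$\lim_{N\to\infty}\frac{E(m_\lambda(\tilde X_{t,\nu,\beta}))}{N!\cdot N^{|\lambda|/2}}=c_l(t)$$
locally uniformly in $t$.

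Next I would check that the starting condition here implies the hypothesis of Lemma \ref{general-limit-expectations-dunkl}. For every partition $\lambda\in\mathcal P$, the monomial $m_\lambda$ is a polynomial (with rational coefficients depending only on $\lambda$, not on $N$ once $N\ge L(\lambda)$) in the power sums $p_j(x)=\sum_i x_i^j$ for $j\le|\lambda|$; this is the very same reduction that was used in the proof of Corollary \ref{convergence-in-expectation-a}. Since the starting empirical moments $\frac{1}{N^{j/2+1}}p_j(x_N)$ converge to $c_j(0)$ for every $j$ by the assumption inherited from Lemma \ref{recurrence-sl-dunkl-det}, the quantities $m_\lambda(x_N)/(N!\,N^{|\lambda|/2})$ indeed have finite limits as $N\to\infty$ for every $\lambda\in\mathcal P$.

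Having checked the hypothesis, Lemma \ref{general-limit-expectations-dunkl} tells us two things: first, the limit in the display above exists locally uniformly in $t$; second, and this is the crucial point, the limit does not depend on the chosen value $\beta\in[1/2,\infty]$. I would then exploit this $\beta$-independence by specialising to $\beta=\infty$, i.e.\ to the frozen Dunkl process treated in Section 7. For $\beta=\infty$, the moments $S_{N,l,\nu,\infty}(t)$ are (up to normalisation) exactly the quantities $E(u_l(X_{t,\nu}))/N^{1+l/2}$ studied in Section 7 via Dynkin's formula, and Lemma \ref{recurrence-sl-dunkl-det} identifies their $N\to\infty$ limit as precisely $c_l(t)$. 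Combining $\beta$-independence with this evaluation at $\beta=\infty$ gives $E(S_{N,l,\nu,\beta}(t))\to c_l(t)$ for all $\beta\in[1/2,\infty]$, which is the corollary.

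The only step that requires any genuine care is the identification of the $\beta$-independent limit: one must be sure that the recurrence extracted at $\beta=\infty$ coincides with the recurrences (\ref{recurrence-dunkl-gerade}) and (\ref{recurrence-dunkl-ungerade}) of Lemma \ref{recurrence-sl-dunkl-det}; but that identification was already carried out in Section 7 when computing $L_\nu u_l$, so no obstacle remains.
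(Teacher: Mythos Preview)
Your proposal is correct and follows essentially the same route as the paper: write $S_{N,l}$ as $m_{(l,0,\ldots,0)}/(N!\,N^{l/2})$, verify that the moment-convergence hypothesis on the starting points passes to the $m_\lambda$-hypothesis of Lemma~\ref{general-limit-expectations-dunkl}, and then use the $\beta$-independence of the limit together with the $\beta=\infty$ identification from Lemma~\ref{recurrence-sl-dunkl-det}. The paper records this in a single sentence by pointing to the analogous argument for Corollary~\ref{convergence-in-expectation-a}, and your write-up simply spells out those steps.
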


\begin{proof} This follows from Lemma \ref{general-limit-expectations-dunkl}
  analogous to the proof of Corollary \ref{convergence-in-expectation-a}.
\end{proof}

 Corollary \ref{convergence-in-expectation-dunkl} can be extended to an a.s.~result:

\begin{theorem}\label{Dunkl-semicircle-mp-B}
Consider the Dunkl processes $(\tilde X_{t,\nu, \beta})_{t\geq 0}$  with $\beta\geq 1/2$, $\nu>0$ and 
with  starting sequences
$(x_{N,i})_{i\ge1}\subset \mathbb R$  as before
 such that for  $l\ge0$,
$$c_l(0):=\lim_{N\to\infty} S_{N,l,\nu,\beta}(0)=\lim_{n\to\infty}  \frac{1}{N^{l/2+1}}(x_{N,1}^{l}+\ldots+x_{N,N}^{l})<\infty$$
 exists.
Let  $\nu=\nu(N)$ with $\nu_0:=\lim_{N\to\infty}\nu(N)/N\ge0$.
Then,
for $l\in \mathbb N_0$,
$$c_l(t):=\lim_{N\to\infty} S_{N,l,\nu,\beta}(t)$$
exists a.s. locally uniformly in $t\in[0,\infty[$.
    Furthermore, the $c_l(t)$ satisfy the recurrence relation from Lemma \ref{recurrence-sl-dunkl-det}, i.e.,
     $c_0(t)=1$, $ c_1(t)=c_1(0)$, and for $l\ge1$,
\begin{align}
c_{2l}(t)&=c_{2l}(0)+ 2l \int_0^t\Bigl(\nu_0 c_{2l-2}(s) +\sum_{h=0}^{l-1} c_{2h}(s)c_{2l-2h-2}(s)\Bigr)\> ds,\notag\\
  c_{2l+1}(t)&=c_{2l+1}(0)+ \int_0^t\Bigl(2l\nu_0 c_{2l-1}(s)+4 \sum_{h=0}^{l-1}(l-h) c_{2h}(s)c_{2l-2h-1}(s)\Bigr)\> ds.\notag
\end{align}
\end{theorem}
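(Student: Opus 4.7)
The plan is to extend Corollary \ref{convergence-in-expectation-dunkl} from convergence in expectation to almost sure convergence via a Borel--Cantelli argument, in direct parallel with the proof of Theorem \ref{semicircle-A}. The first step is to apply the It\^o formula for Dunkl processes (Corollary 3.6 of \cite{CGY}) to the power sums $u_l(x) = x_1^l + \ldots + x_N^l$. This yields a decomposition
$$u_l(\tilde X_{t,\nu,\beta}) = u_l(x_N) + M^{B,l}_t + M^{J,l}_t + \int_0^t (\tilde{\cal L}_{\nu,\beta} u_l)(\tilde X_{s,\nu,\beta})\,ds,$$
where $M^{B,l}$ is the Brownian martingale part with quadratic variation $\frac{l^2}{\beta}\int_0^t u_{2l-2}(\tilde X_{s,\nu,\beta})\,ds$, and $M^{J,l}$ collects the compensated jump martingales of $M^i, M^{ij-}, M^{ij}$ associated with the roots $\sqrt 2 e_i$ and $\pm(e_i \pm e_j)$, as in (\ref{complete-jump-martingale-dunkl}). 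By (\ref{ungerade-L2}), (\ref{gerade-L2}) and an additional $\frac{l(l-1)}{2\beta} u_{l-2}$ term coming from $\frac{1}{2\beta}\Delta u_l$, the drift has precisely the polynomial structure of the recurrences in Lemma \ref{recurrence-sl-dunkl-det}, up to $O(1/N)$ and $O(1/\beta N)$ corrections.

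The second step is to control both martingale parts by the Burkholder--Davis--Gundy and Chebychev inequalities. For $M^{B,l}$ one has
$$P\Bigl(\sup_{s\le T}\Bigl|\tfrac{1}{N^{l/2+1}} M^{B,l}_s\Bigr|>\varepsilon\Bigr)
\le \frac{c\, l^2}{\beta\varepsilon^2}\, E\Bigl[\tfrac{1}{N^{l+2}}\int_0^T u_{2l-2}(\tilde X_{s,\nu,\beta})\,ds\Bigr],$$
which is $O(1/N)$ by Corollary \ref{convergence-in-expectation-dunkl} applied at order $2l-2$; summing over $N$ and invoking Borel--Cantelli yields $\frac{1}{N^{l/2+1}} M^{B,l}_t \to 0$ a.s.~locally uniformly in $t$. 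For the jump martingale $M^{J,l}$, the key observation is that the numerators in (\ref{complete-jump-martingale-dunkl}) are differences of monomials, and divisions such as $((-x)^l - x^l)/x$ or $(x^a y^b - x^b y^a)/(x\pm y)$ are polynomials of degree $l-1$, so the apparent singularities at the reflection hyperplanes cancel algebraically. Consequently each predictable quadratic variation is bounded by a finite linear combination of integrals $\int_0^T m_{\tilde\lambda}(\tilde X_{s,\nu,\beta})\,ds$ with $|\tilde\lambda|\le 2l-2$, whose expectations are controlled by Lemma \ref{general-limit-expectations-dunkl} applied to lower weights. The same BDG/Chebychev/Borel--Cantelli chain then gives $\frac{1}{N^{l/2+1}} M^{J,l}_t \to 0$ a.s.~locally uniformly in $t$.

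The third step is induction on $l$. The cases $l=0$ (trivial) and $l=1$ (direct from (\ref{dunkl-sum1}) together with the martingale property of $M^i, M^{ij-}, M^{ij}$) give the base. For the inductive step, use the induction hypothesis, the a.s.~vanishing of both normalised martingale parts, and dominated convergence in the drift integral: since $\tilde{\cal L}_{\nu,\beta} u_l$ is, by (\ref{ungerade-L2}) and (\ref{gerade-L2}), a polynomial in the $u_h$ with $h<l$ (plus a negligible $\frac{1}{\beta}$ Laplacian correction), one recovers exactly the recurrences for $c_l(t)$ stated in the theorem; locally uniform a.s.~convergence follows from the locally uniform convergence of the integrands in $s$.

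The main obstacle will be the quadratic variation estimate for the pure-jump martingales $M^{J,l}$. Unlike the diffusion part, these martingales live on a countable set of jump times, and the integrands in (\ref{complete-jump-martingale-dunkl}) carry denominators $x_i$ and $x_i \pm x_j$ that can be arbitrarily small on each sample path. The bookkeeping-heavy point is to verify that these singularities cancel against the numerators term by term, so that the squared integrands rewrite as symmetric polynomials of bounded degree, which can then be absorbed into the induction via Lemma \ref{general-limit-expectations-dunkl}. Once this combinatorial cancellation is established, the whole argument follows the template of Theorem \ref{semicircle-A}.
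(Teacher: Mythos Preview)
Your overall strategy---It\^o decomposition for $u_l$, BDG/Chebychev bounds on the martingale parts, Borel--Cantelli, then induction on $l$ using the polynomial drift---matches the paper's. However, there is a genuine gap in the second step: the second-moment (exponent-$2$) BDG bound you invoke is \emph{not summable} for the jump martingales, so Borel--Cantelli does not apply as stated.

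Concretely, for the Brownian part $M^{B,l}$ your bound is in fact $O(1/N^2)$ (better than the $O(1/N)$ you claim), since $\frac{1}{N^{l+2}}E\bigl[u_{2l-2}(\tilde X_{s,\nu,\beta})\bigr]=\frac{1}{N^{2}}E\bigl[S_{N,2l-2,\nu,\beta}(s)\bigr]$; this part is fine. But for the $M^{ij-}$-integrals the predictable quadratic variation is a \emph{double} sum over $i\ne j$ of polynomials of degree $2l-2$, and for $a+b=2l-2$ one has $\sum_{i\ne j}x_i^ax_j^b\sim u_a u_b\sim N^{l+1}$ in expectation by Lemma~\ref{general-limit-expectations-dunkl}. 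After dividing by $N^{l+2}$ this leaves only $O(1/N)$. The same happens for the $M^{i}$-integrals when $\nu_0>0$, because the squared integrand carries a factor $\nu\sim\nu_0 N$. An $O(1/N)$ probability bound is not summable over $N$, so the a.s.\ convergence of $\tfrac{1}{N^{l/2+1}}M^{J,l}_t\to 0$ does not follow from ``the same BDG/Chebychev/Borel--Cantelli chain''.

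This is precisely why the paper upgrades to the \emph{fourth-moment} BDG inequality for the odd moments: with exponent $4$ the Markov bound produces a prefactor $N^{-(4l+6)}$, one extracts $1/N^{2}$, and then shows via H\"older and Lemma~\ref{general-limit-expectations-dunkl} that $N^{-(4l+4)}E\bigl[[A^{i,2l+1},A^{i,2l+1}]_T^{\,2}\bigr]$ stays bounded for each of the three martingale pieces, yielding an overall $O(1/N^{2})$ bound. Once you replace your second-moment estimate by this fourth-moment estimate (and carry the extra H\"older steps needed to control the squared quadratic variation), the rest of your inductive argument goes through exactly as you outline.
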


\begin{proof}
  Again, by It\^{o}'s formula for Dunkl processes  in Corollary 3.6 of \cite{CGY},
  we obtain for $l\geq 1$
\begin{align}\label{Dunkl-SDE-Moments-B}
  \sum_{i=1}^N&(\tilde X_{t,\nu,\beta}^i)^{l}=\sum_{i=1}^N x_i^{l}+D_l(t) + \frac{l}{\sqrt{\beta}} \sum_{i=1}^N\int_0^t(\tilde X_{s,\nu,\beta}^i)^{l-1} dB_s^i
 \\
&+\sum_{i=1}^N\int_0^t\frac{(\tilde X_{s-,\nu,\beta}^i)^{l}-(-\tilde X_{s-,\nu,\beta}^i)^{l}}{\sqrt{2}\tilde X_{s-,\nu,\beta}^i}\sqrt{\nu}dM_s^i\notag\\
 &+\sum_{i=1}^N \sum_{j\neq i}\int_0^t\frac{(\tilde X_{s-,\nu,\beta}^i)^{l}-(-\tilde X_{s-,\nu,\beta}^j)^{l}+(\tilde X_{s-,\nu,\beta}^j)^{l}-(-\tilde X_{s-,\nu,\beta}^i)^{l}}{\tilde X_{s-,\nu,\beta}^i+\tilde X_{s-,\nu,\beta}^j}dM_s^{ij^-}\notag
\end{align}
with the drift term
\begin{align}
D_l(t)&:=
\int_0^t l \sum_{i=1}^N \sum_{j\neq i}
\frac{2(\tilde X_{s,\nu,\beta}^i)^{l}}{(\tilde X_{s,\nu,\beta}^i)^2-(\tilde X_{s,\nu,\beta}^j)^2} ds\notag\\
&+
\sum_{i=1}^N \int_0^t(l\nu+\frac{l(l-1)}{2\beta})(\tilde X_{s,\nu,\beta}^i)^{l-2} ds\notag\\
&+\frac{\nu}{2}\int_0^t\sum_{i=1}^N\frac{(-\tilde X_{s-,\nu,\beta}^i)^{l}-(\tilde X_{s-,\nu,\beta}^i)^{l}}{(\tilde X_{s-,\nu,\beta}^i)^2}ds\notag\\
&+\frac{1}{2}\int_0^t \sum_{i=1}^N \sum_{j\neq i}\frac{(-\tilde X_{s-,\nu,\beta}^i)^{l}+(-\tilde X_{s-,\nu,\beta}^j)^{l}-(\tilde X_{s-,\nu,\beta}^j)^{l}-(\tilde X_{s-,\nu,\beta}^i)^{l}}{(\tilde X_{s-,\nu,\beta}^i+\tilde X_{s-,\nu,\beta}^j)^2}ds.
\notag\end{align}
Please notice  that here the sums of the integrals w.r.t.~the $ M^{ij}$ are zero and thus omitted.
As  in the proof of Lemma \ref{general-limit-expectations-dunkl}, the integrals with respect to $B^i, M^i, M^{ij-}$ in the RHS of
(\ref{Dunkl-SDE-Moments-B}) are martingales. For simplicity we denote them by $A^{1,l},A^{2,l},A^{3,l}$ respectively. 
We also notice  that the covariations between the jump processes associated to different roots are zero by Eq.~(49) in \cite{CGY}.

As for the even moments of order $2l$ all terms associated with the jump component of the Dunkl process vanish,
 we are left with the terms of a Bessel process of type B, and the claim follows by the results of Section 5.

 Hence it remains to prove the claim for the odd moments. Here we proceed similar to the proof of Theorem \ref{semicircle-A}
 where we now
 apply the Burkholder-Davis-Gundy inequality with exponent four in order to get a sufficiently fast convergence of
 the bound leading to a.s. convergence in the end.
 In fact, (\ref{Dunkl-SDE-Moments-B}) together with the Markov inequality, Burkholder-Davis-Gundy inequality with exponent 4  and
the inequality $(a+b+c)^2\le3(a^2+b^2+c^2)$ for $a,b,c\in\mathbb R$
 show that
 for all $l\in \mathbb N_0$, $\epsilon>0$ and $T>0$ and some universal constant $c>0$,
\begin{align}\label{BDG-bound-dunkl}
\sup_{s\leq T}P\Bigl(&\Bigl|\frac{1}{N^{\frac{2l+1}{2}+1}}\Bigl(\sum_{i=1}^N(\tilde{X}_{s,\nu,\beta}^i)^{2l+1}-\sum_{i=1}^N x_{N,i}^{2l+1}  - D^{2l+1}_t \Bigr) \Bigr|>\epsilon\Bigr)\\
&\leq
 \frac{1}{\epsilon^4} E\Bigl(\sup_{s\leq T}\Bigl(\frac{1}{N^{\frac{2l+1}{2}+1}}\Bigl( A^{1,2l+1}+A^{2,2l+1}+A^{3,2l+1}\Bigr)
\Bigr)^4\Bigr)\notag\\
&\leq \frac{c}{\epsilon^4}\frac{1}{N^{4l+6}}E\Bigl(\Bigl(\sum_{i=1}^3[A^{i,2l+1},A^{i,2l+1}]_T\Bigr)^2\Bigr)\notag\\
&\leq 
\frac{3c}{\epsilon^4}\frac{1}{N^{2}}\sum_{i=1}^3E\Bigl(\frac{1}{N^{4l+4}}[A^{i,2l+1},A^{i,2l+1}]^2_T\Bigr)\notag\\
&=:\frac{3c}{\epsilon^4}\frac{1}{N^{2}}B_{N,T, 2l+1}\notag.
\end{align}
We next prove
\begin{align}\label{sum-bound-dunkl}
\sum_{N=1}^\infty \frac{1}{N^{2}}B_{N,T, 2l+1}<\infty.
\end{align}
For this, we  show that $\lim_{N\to\infty} B_{N,T,2l+1}<\infty$, which holds if
$$ \lim_{N\to\infty} E\Bigl(\frac{1}{N^{4l+4}}[A^{i,2l+1},A^{i,2l+1}]^2_T\Bigr)<\infty    \quad\quad \text{for}\quad i=1,2,3.$$
 We consider these expectations separately with the aid of Corollary \ref{convergence-in-expectation-dunkl}. 

 The Brownian martingale $A^{1,2l+1}$ can be handled as in the proof of Theorem \ref{semicircle-A}; in fact, H\"older's inequality and
 Corollary \ref{convergence-in-expectation-dunkl}  yield that
\begin{align}
E(\frac{1}{N^{4l+4}}[A^{1,2l+1},A^{1,2l+1}]^2_T)
&= E\Bigl(\frac{(2l+1)^2}{\beta N^{4l+4}}\Bigl(\sum_{i=1}^N\int_0^T(\tilde X_{s,\nu,\beta}^i)^{4l}ds\Bigr)^2\Bigr)\\
&\leq \frac{(2l+1)^2}{\beta N^{4l+4}}E\Bigl(N\sum_{i=1}^N\Bigl(\int_0^T(\tilde X_{s,\nu,\beta}^i)^{4l}ds\Bigr)^2\Bigl)\notag\\
&\leq \frac{T (2l+1)^2}{\beta N^{2}}E\Bigl(\sum_{i=1}^N\int_0^T\frac{(\tilde X_{s,\nu,\beta}^i)^{8l}}{N^{4l+1}}ds\Bigl)\notag\\
&=\frac{T (2l+1)^2}{\beta N^{2}} \int_0^T E( S_{N,4l,\nu,\beta}(s))\> ds \to 0
\notag\end{align}
 for $N\to\infty$.
 For  $A^{2,2l+1}$  we use Eq.~(48) from \cite{CGY},  H\"older's inequality, and  Corollary \ref{convergence-in-expectation-dunkl} again
 and conclude that
\begin{align}
E(\frac{1}{N^{4l+4}}[A^{2,2l+1},A^{2,2l+1}]^2_T)
&= E\Bigl(\frac{4\nu^2}{N^{4l+4}}\Bigl(\sum_{i=1}^N\int_0^T(\tilde X_{s-,\nu,\beta}^i)^{4l}d[M^i,M^i]_s\Bigr)^2\Bigl)\notag\\
&\leq \frac{4\nu^2}{N^{4l+4}} E\Bigl(N\sum_{i=1}^N(\int_0^T(\tilde X_{s-,\nu,\beta}^i)^{4l}ds)^2\Bigr)\notag\\
&\leq 4T\frac{\nu^2}{N^2}\sum_{i=1}^NE\Bigl(\int_0^T\frac{(\tilde X_{s-,\nu,\beta}^i)^{8l}}{N^{4l+1}}\> ds\Bigr)
\notag\end{align}
remains bounded for $N\to\infty$; notice here that $\nu/N$ tends to $\nu_0$.
Moreover, using the polynom division as in (\ref{ungerade-L}) and H\"older's inequality three times, we see
from Lemma \ref{general-limit-expectations-dunkl} that
\begin{align}
E(&\frac{1}{N^{4l+4}}[A^{3,2l+1},A^{3,2l+1}]^2_T)\notag\\
&= E\Bigl(\frac{4}{N^{4l+4}}\Bigl(\sum_{i,j:j\neq i}\int_0^T
\Bigl(\sum_{h=0}^{2l}(-1)^h(\tilde X_{s-,\nu,\beta}^i)^{h}(\tilde X_{s-,\nu,\beta}^j)^{2l-h}\Bigr)^2d[M^{ij^-},M^{ij^-}]_s\Bigr)^2\Bigl)\notag\\
&\leq \frac{4}{N^{4l+4}} E\Bigl(N(N-1)\sum_{i,j:j\neq i}
\Bigl(\int_0^T\Bigl(\sum_{h=0}^{2l}(-1)^h(\tilde X_{s-,\nu,\beta}^i)^{h}(\tilde X_{s-,\nu,\beta}^j)^{2l-h}\Bigr)^2ds\Bigr)^2\Bigr)\notag\\
&\leq \frac{4T}{N^{4l+4}} E\Bigl(N(N-1)\sum_{i,j:j\neq i}\int_0^T(\sum_{h=0}^{2l}(-1)^h(\tilde X_{s-,\nu,\beta}^i)^{h}(\tilde X_{s-,\nu,\beta}^j)^{2l-h})^4 ds\Bigr)\notag\\
&\leq 4T (2l+1)^3 E\Bigl(\frac{N(N-1)}{N^2}\sum_{i,j:j\neq i}\int_0^T\sum_{h=0}^{2l}\frac{(\tilde X_{s-,\nu,\beta}^i)^{4h}}{N^{2h+1}}\frac{(\tilde X_{s-,\nu,\beta}^j)^{8l-4h}}{N^{4l-2h+1}} ds\Bigr)\notag
\end{align}
also remains bounded for $N\to\infty$. This completes the proof of (\ref{sum-bound-dunkl}).

   Looking at the drift term $D^{2l+1}$ of the RHS of (\ref{Dunkl-SDE-Moments-B}),
we obtain the recurrence relation (\ref{recurrence-dunkl-ungerade}),
where $\nu$ is replaced by $\nu +\frac{l-1}{2\beta}$. The desired results now follow by the same arguments as
in the proof of Theorem \ref{semicircle-A} using the  results of  Lemma \ref{recurrence-sl-dunkl-det},
as well as Theorem \ref{free-convolution-b}.
\end{proof}

As in  Section 3, Theorem \ref{Dunkl-semicircle-mp-B} leads to the following final limit theorem.

\begin{theorem}\label{Dunkl-free-convolution-b-random}
Let $\mu\in M^1([0,\infty[)$ be a probability measure which satisfies
 the moment condition (\ref{strong-carleman}).
Let
 $(x_{N,n})_{N\ge1, 1\le n\le N}\subset[0,\infty[$ 
such that the empirical measures
\begin{equation}
\mu_{N,0}:= \frac{1}{N}(\delta_{x_{N, 1}/\sqrt{N}}+\ldots \delta_{x_{N, N}/\sqrt{N}})
\end{equation}
tend weakly to  $\mu$ for $N\to\infty$. Consider the normalized Dunkl processes
$(\tilde{X}_{t,\nu, \beta})_{t\ge0}$ of type B with start in $(x_{N,1},\ldots,x_{N,N}) $ for $N\ge2$.
Then, for $t\ge0$  and
$$\lim_{N\to\infty} \frac{\nu(N)}{N}=\nu_0\geq 0,$$
 the  
 empirical measures
 $$\mu_{N,t}:= \frac{1}{N}(\delta_{\frac{\tilde{X}^1_{t,\nu,\beta}}{\sqrt{ N}}}+\ldots +\delta_{\frac{\tilde{X}^N_{t,\nu, \beta}}{\sqrt{N}}}) $$
 tend weakly a.s.~to  the limiting measure whose Stieltjes transform satisfies the PDEs
 (\ref{pde-g-dunkl}) with the corresponding initial condition.
\end{theorem}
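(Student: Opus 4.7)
The plan is to reduce the statement to Theorem \ref{Dunkl-semicircle-mp-B} on almost sure moment convergence combined with the moment problem analysis already carried out in Proposition \ref{general-free-convolution-dunkl-firststep} and Proposition \ref{general-free-convolution-dunkl-secondstep}. First, I would observe that the weak convergence $\mu_{N,0}\to\mu$ together with the Carleman-type condition (\ref{strong-carleman}) for $\mu$ implies, by the moment convergence theorem, that for every $l\ge 0$ the initial moments $S_{N,l,\nu,\beta}(0)$ converge to $c_l(0):=\int x^l\,d\mu(x)$. Hence the hypotheses of Theorem \ref{Dunkl-semicircle-mp-B} are satisfied, and I obtain, for each fixed $l\ge 0$, that $S_{N,l,\nu(N),\beta}(t)\to c_l(t)$ almost surely, locally uniformly in $t\in[0,\infty[$, where the $c_l(t)$ satisfy the recurrences (\ref{recurrence-dunkl-gerade}) and (\ref{recurrence-dunkl-ungerade}).

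Next I would upgrade this to a simultaneous almost sure statement by intersecting the (countably many) full-measure events on which the convergence holds for each individual $l$. On this intersection, all moments of the random empirical measures $\mu_{N,t,\nu(N),\beta}$ converge to the deterministic sequence $(c_l(t))_{l\ge 0}$ for every $t\ge 0$. As in the proof of Proposition \ref{general-free-convolution-a-firststep}, I would then run an induction on $l$ based on the recurrences (\ref{recurrence-dunkl-gerade})-(\ref{recurrence-dunkl-ungerade}) to establish a bound of the form $|c_l(t)|\le (Rl)^l$ for $t$ in any compact interval, with $R$ depending on $t$ and on the constant $c$ from (\ref{strong-carleman}). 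The even and odd recurrences are quadratic in the earlier $c_m(s)$ with bounded integration in $s$ (and the extra $\nu_0$ term is linear), so the inductive estimate runs exactly as in the type-A case. This bound implies Carleman's condition (\ref{carleman}) for the limiting moment sequence, so that $(c_l(t))_{l\ge 0}$ is the moment sequence of a unique probability measure $\mu_t\in M^1(\mathbb R)$ satisfying (\ref{strong-carleman}).

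With almost sure moment convergence and the uniqueness of $\mu_t$ from its moments, the moment convergence theorem yields that $\mu_{N,t}\to \mu_t$ weakly a.s., for every $t\ge 0$. Finally, by Proposition \ref{general-free-convolution-dunkl-secondstep} applied to the measures $\mu_t$ just constructed, the Stieltjes transform $G(t,z)=G_{\mu_t}(z)$ satisfies the PDE system (\ref{pde-g-dunkl}) with initial condition $G(0,z)=G_\mu(z)$, which completes the identification claimed in the theorem.

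The main obstacles I foresee are essentially technical rather than conceptual: first, making sure the almost sure convergence in Theorem \ref{Dunkl-semicircle-mp-B} is indeed locally uniform in $t$ so that one can pass to the limit inside the integrals defining the recurrences (which is needed to identify the limits with the $c_l(t)$ of Lemma \ref{recurrence-sl-dunkl-det} on the limit path level); and second, verifying the inductive moment bound for the odd recurrence (\ref{recurrence-dunkl-ungerade}), where the coefficient $4(l-h)$ grows with $l$ but is compensated by the factor $1/l$ hidden in the $(Rl)^l$ estimate, exactly as in the A- and B-cases. Once these two points are in place, the rest of the argument is a direct compilation of the prior results.
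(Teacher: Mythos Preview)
Your proposal is correct and follows essentially the same approach as the paper. The paper does not give an explicit proof of this theorem; it simply states that, as in Section 3, the result follows from Theorem \ref{Dunkl-semicircle-mp-B} together with the moment-problem analysis of Propositions \ref{general-free-convolution-dunkl-firststep} and \ref{general-free-convolution-dunkl-secondstep}, which is precisely the chain of reductions you outline (a.s.\ moment convergence, Carleman-type bounds on the $c_l(t)$, moment convergence theorem, identification of the Stieltjes transform via the PDEs).
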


\end{document}